\definecolor{qqqqcc}{rgb}{0.0,0.0,0.8}
\definecolor{qqqqff}{rgb}{0.0,0.0,1.0}
\definecolor{uuuuuu}{rgb}{0.26666666666666666,0.26666666666666666,0.26666666666666666}
\theoremstyle{plain}
\newtheorem{thm}{Theorem}
\newtheorem{prop}[thm]{Proposition}
\newtheorem{cor}[thm]{Corollary}
\newtheorem{lem}[thm]{Lemma}
\theoremstyle{definition}
\newtheorem{dfn}[thm]{{\it Definition}}
\newtheorem{exa}[thm]{{\it Example}}
\theoremstyle{remark}
\newtheorem{rem}[thm]{{\it Remark}}
\newtheorem*{rem*}{{\it Remark}}
\DeclareMathOperator{\Alg}{{\mathrm{Alg}}}
\DeclareMathOperator{\dess}{{\mathsf{Des}}}
\DeclareMathOperator{\dzii}{{\mathsf{Chi}}}
\DeclareMathOperator{\I}{I}
\DeclareMathOperator{\koo}{{\mathsf{root}}}
\DeclareMathOperator{\Lat}{{\mathsf{Lat}}}
\DeclareMathOperator{\lin}{\mathsf{lin}}
\DeclareMathOperator{\paa}{{\mathsf{par}}}
\DeclareMathOperator{\trig}{\mathbb{C}_T[X]}
\DeclareMathOperator{\unitT}{\mathcal{T}}
\DeclareMathOperator{\unitL}{\mathcal{L}}
\newcommand*{\card}[1]{\mathrm{card}(#1)}
\newcommand*{\des}[1]{{\dess(#1)}}
\newcommand*{\dzD}[1]{{\EuScript D}\big(#1\big)}
\newcommand*{\dz}[1]{{\EuScript D}(#1)}
\newcommand*{\dzi}[1]{\dzii(#1)}
\newcommand*{\dzin}[2]{\dzii^{\langle#1\rangle}(#2)}
\newcommand*{\lambdab}{{\boldsymbol\lambda}}
\newcommand{\gphi}{\varGamma_{\hat\varphi}}
\newcommand*{\hphi}{\hat\varphi}
\newcommand*{\hphin}{\hat\varphi_n}
\newcommand*{\hphiw}{\hat\varphi_w}
\newcommand*{\hpsi}{\hat\psi}
\newcommand*{\mphi}{M_{\hat\varphi}}
\newcommand*{\mphiw}{M_{\hat\varphi_w}}
\newcommand*{\mphin}{M_{\hat\varphi_n}}
\newcommand*{\mpsi}{M_{\hat\psi}}
\newcommand*{\multiskal}{\mathcal M}
\newcommand*{\pa}[1]{\paa(#1)}
\newcommand*{\pan}[2]{\paa^{#1}(#2)}
\newcommand*{\slam}{S_{\boldsymbol \lambda}}
\newcommand*{\smalloplus}{\raise0pt\hbox{$\scriptscriptstyle \oplus$}}
\newcommand*{\tcal}{{\mathscr T}}
\newcommand*{\gcal}{{\mathscr S}}
\newcommand*{\nul}[1]{\mathcal{N}(#1)}
\newcommand*{\toe}{\overset{\ee}{\longrightarrow}}
\newcommand*{\toh}{\overset{\calh}{\longrightarrow}}
\newcommand*{\tosot}{\overset{\mathrm{SOT}}{\longrightarrow}}
\newcommand*{\pth}{\mathscr{P}}
\newcommand*{\multi}{\mathcal{M}(\lambdab)}
\def\pp{\EuScript{P}}
\newcommand{\Ge}{\geqslant}
\def\is#1#2{\langle#1,#2\rangle}
\def\bis#1#2{\big\langle#1,#2\big \rangle}
\def\Bis#1#2{\Big\langle#1,#2\Big \rangle}
\DeclareMathOperator{\D}{d\!}
\def\Dbb{\mathbb D}
\def\N{\mathbb N}
\def\Z{\mathbb{Z}}
\def\R{\mathbb{R}}
\def\C{\mathbb{C}}
\def\T{\mathbb{T}}
\def\calh{\mathcal H}
\def\calm{\mathcal {GM}}
\def\cala{\mathcal A}
\def\bsb{{\mathbf B}}
\def\aa{\EuScript A}
\def\ee{\EuScript E}
\def\hh{\EuScript H}
\def\mm{\EuScript M}
\def\ff{\boldsymbol f}
\def\gg{\boldsymbol g}
\DeclareMathAlphabet{\mathpzc}{OT1}{pzc}{m}{it}
\newcommand\reallywidehat[1]{\arraycolsep=0pt\relax%
\begin{array}{c}
\stretchto{
  \scaleto{
    \scalerel*[\widthof{\ensuremath{#1}}]{\kern-.5pt\bigwedge\kern-.5pt}
    {\rule[-\textheight/2]{1ex}{\textheight}} 
  }{\textheight} %
}{0.5ex}\\           
#1\\                 
\rule{-1ex}{0ex}
\end{array}
}
\title[Generalized multipliers: commutant and reflexivity]{Generalized multipliers for left-invertible analytic operators and its applications to commutant and reflexivity}
\author{P.\ Dymek}
\author{A.\ P{\l}aneta}
\author{M.\ Ptak}
\address{Katedra Zastosowa\'n Matematyki, Uniwersytet Rolniczy w Krakowie, ul. Balicka 253c, 30-198 Krak\'ow, Poland}
    \email{piotr.dymek@urk.edu.pl}
    \email{artur.planeta@urk.edu.pl}
    \email{rmptak@cyf-kr.edu.pl}
\subjclass[2010]{Primary: 47A45; Secondary: 47B37}
\keywords{Left-invertible analytic operator, weighted shift on directed tree, generalized multiplier, commutant, reflexivity}
\begin{document}
\setstretch{1.2}

\begin{abstract}
We introduce generalized multipliers for left-invertible analytic operators. We show that they form a Banach algebra and characterize the commutant of such operators in its terms. In the special case, we describe the commutant of balanced weighted shift only in terms of its weights. In addition, we prove two independent criteria for reflexivity of weighted shifts on directed trees.
\end{abstract}
\maketitle
\section{Introduction}
We study left-invertible analytic operators using analytic function theory approach, which was initiated by Shimorin in \cite{shim}. This class of operators is quite large, it contains for example shifts on generalized Dirichlet spaces (see \cite{ric}), shifts on weighted Bergmann space with logarythmically subharmonic weights on the unit disc in the complex plane (see \cite{shi} and \cite{hed}) and left-invertible weighted shifts on leafless and rooted directed trees (see \cite{j-j-s-2012-mams} and \cite[Lemma 3.3]{c-t}). 

 A characterization of the commutant of a given operator is one of the way of investigation of the operator itself, see \cite[\S 12]{con}. The classical result on unilateral shift of  (the multiplication by the independent variable on the Hardy space $H^2$)  says that its commutant  is the algebra of all multiplications by bounded analytic functions, see \cite[\S 26]{con}. In the case of unilateral shift of arbitrary multiplicity, its commutant is the algebra of all bounded  analytic operator--valued functions (see \cite{hal,he-lo,r-r}). It was shown by Shields in \cite{shi}, that the commutant of unilateral weighted shift of multiplicity one may be identified with the algebra of its multipliers (some specific formal power series generating bounded analytic functions). On the other hand, the multipliers for weighted shifts on rooted directed trees, introduced in \cite{b-d-p-2015},  are not sufficiently large to determine the whole commutant of the operator. Hence, our motivation was to generalize the notion of multipliers to more general context. Our approach is based on the Cauchy type multiplication of the formal power series, which appeared in papers (see \cite{gel, shi, j-l}) by Shields (for classical weighted shifts), Gellar (for general bilateral shifts), Jewell and Lubin (for commuting $n$-tuple of unilateral shifts).

In section 3, we use Shimorin's model for left-invertible analytic operator $T$. This enables us to consider such operator as a multiplication operator by an independent variable on a space of analytic functions with values in $\nul{T^*}$ - the kernel of the adjoint of $T$. We define generalized multipliers for $T$, whose coefficients are bounded operators on $\nul{T^*}$ and denote the set of all generalized multipliers by $\calm(T)$. Moreover, we distinguish the set $\multiskal(T)$ of all generalized multipliers, whose coefficients are a scalar multiple of the identity operator. We prove that both spaces $\calm(T)$ and $\multiskal(T)$ are a Banach algebra (see Theorem \ref{cauchym}). In addition, the space $\calm(T)$ is unitary equivalent to the commutant of $T$ (see Theorem \ref{komutant}).  

In section 4, we apply general theory for the weighted shift $\slam$ on rooted directed tree, which was inspired in \cite{c-t}. Using separated bases we define the rotation of the vector in the space of analytic functions with values in $\nul{\slam^*}$. This enables us to prove that truncated multipliers tends to the original multiplier as the size of truncation grows to infinity. As a consequence, we obtain that multipliers for $\slam$ defined in \cite{b-d-p-2015} and $\multiskal(\slam)$ are in fact the same object (see Proposition \ref{skalarne}). Hence, the notion of generalized multipliers extends previously defined multipliers. Using the aforementioned results, in section 5, in Theorem \ref{kom}, we describe the commutant of balanced weighted shift on rooted directed tree (whose kernel of the adjoint is finite dimensional) only in terms of weights of the operator. In section 6, we return to general context and obtain a criterion for reflexivity of the left-invertible analytic operator (see Theorem \ref{T-ref}). In addition, for the case of weighted shifts on directed trees we prove a second criterion for reflexivity, which is independent from the previous one and it is a  generalization of \cite[Theorem 4.3]{b-d-p-p-2017}. 

 Recently, the commutant and reflexivity for $n$-tuples of multiplication operators by independent variables $z_1,\ldots,z_n$ on a reproducing Hilbert spaces of $E$-valued holomorphic on bounded domain in $\C^n$ admitting polynomial approximation, where $E$ is a separable Hilbert space, were studied in the paper \cite{c-p-t-2}. In particular, some new results for commutant of Bergmann weighted shifts (which are automatically balanced) on rooted, locally finite, leafless directed trees with finite branching index (with finite dimensional kernel of the adjoint) were proven.
\section{Preliminaries}
Let $\N$, $\R$ and $\C$ denote the set of all natural numbers, real numbers and complex numbers, respectively. Set $\N_0=\N\cup\{0\}$. Denote by $\T$ the unit circle $\{z\in\C\colon |z|=1\}$ and by $\Dbb_r$ the open unit disc $\{z\in\C\colon |z|<r\}$ of radius $r>0$. By $\chi_\sigma$ we denote the characteristic function of the set $\sigma$. In all what follows we use the convention $\sum_{i\in \emptyset}x_i=0$ and $\prod_{i\in \emptyset}x_i=1$. Given two sets $X,Y$, the symbol $Y^X$ stands for the set of all functions $f\colon X \to Y$.

Let $\hh$ be a complex Hilbert space. If $A$ is a (linear) operator in $\hh$, then $\dz{A}$ and $A^*$ denote the domain and the adjoint of $A$, respectively (in case it exists). We write $\bsb(\hh)$ for the algebra of all bounded operators on $\hh$ equipped with the standard operator norm.  If $A\in\bsb(\hh)$, then $r(A)$ denotes the spectral radius of $A$. Let $\mathcal{W}$ be a subalgebra of $\bsb(\hh)$. Then $\Lat \mathcal{W}$ stands for the set of all invariant closed subspaces of all operators $A\in\mathcal{W}$; recall that a closed subspace $\mathcal{L}$ of $\hh$ is \textit{invariant} for $A\in\bsb(\hh)$ if $A\mathcal{L}\subset \mathcal{L}$. If $\mm$ is a family of subspaces of $\hh$, then we set $\Alg \mm = \{ A \in \bsb(\hh)\colon A\mathcal{L}\subset \mathcal{L} \text{ for every } \mathcal{L} \in \mm\}$. The algebra $\mathcal{W}$ is said to be {\em reflexive} if $\Alg \Lat \mathcal{W} = \mathcal{W}$. Given $A\in\bsb(\hh)$, the symbol $\mathcal{W}(A)$ stands for the smallest algebra containing $A$ and the identity operator $\I_\hh$ and closed in the weak operator topology; if $\mathcal{W}(A)$ is reflexive, then $A$ is said to be {\em reflexive}. Note that $\Lat A=\Lat \mathcal{W}(A)$. For the first time the reflexive operators were investigated in
 \cite{Sa}.


Let $V$ be a nonempty set. Then $\ell^2(V)$ denotes the Hilbert space of all functions $f\colon V\to\C$ such that $\sum_{v\in V}|f(v)|^2<\infty$ with the inner product given by $\is{f}{g}=\sum_{v\in V} f(v)\overline{g(v)}$ for $f,g\in \ell^2(V)$. The norm induced by $\is{\cdot}{-}$ is denoted by $\| \cdot \|$. For $u \in V$, we define $e_u \in \ell^2(V)$ to be the characteristic function of the one-point set $\{u\}$; clearly, $\{e_u\}_{u\in V}$ is an orthonormal basis of $\ell^2(V)$. Given a subset $W$ of $V$, $\ell^2(W)$ stands for the subspace of $\ell^2(V)$ composed of all functions $f$ such that $f(v)=0$ for all $v\in V\setminus W$. By $P_{W}$ we denote the orthogonal projection from $\ell^2(V)$ onto $\ell^2(W)$. 

Let $\tcal=(V,E)$ be a directed tree ($V$ and $E$ stand for the sets of vertices and directed edges of $\tcal$, respectively). Denote by $\paa$ the partial function from $V$ to $V$ which assigns to a vertex $u\in V$ its parent $\pa{u}$ (i.e.\ a unique $v \in V$ such that $(v,u)\in E$). For $k\in \N$, $\paa^k$ denotes the $k$-fold composition of the partial function $\paa$; $\paa^0$ denotes the identity map on $V$. Set $\dzin n u= \{v\in V\colon \paa^n(v) =u\}$ for $u \in V$, $n\in\N_0$ and $\des{u} = \bigcup_{n=0}^\infty \dzin n u $. A vertex $u \in V$ is called a {\em root} of $\tcal$ if $u$ has no parent. A root is unique (provided it exists); we denote it by $\koo$. The tree $\tcal$ is {\em rooted} if the root exists. The tree $\tcal$ is {\em leafless} if $\card{\dzi{v}}\Ge 1$ for every $v\in V$, where $\card{Y}$ denotes the cardinal number of the set $Y$. Suppose $\tcal$ is rooted. We set $V^\circ=V\setminus \{\koo\}$. If $v\in V$, then $|v|$ denotes the unique $k\in\N_0$ such that $\paa^k(v)=\koo$. 
A subgraph $\gcal$ of $\tcal$ which is a directed tree itself is called a {\em subtree} of $\tcal$. A {\em path} in $\tcal$ is a subtree $\pth=(V_\pth,E_\pth)$ of $\tcal$ which satisfies the following two conditions: (i) $\koo\in\pth$, (ii) for every $v\in V_\pth$, $\card{\mathsf{Chi}_{\pth}(v)}=1$. The collection of all paths in $\tcal$ is denoted by $\pp=\pp(\tcal)$. We refer the reader to \cite{j-j-s-2012-mams} for more information on directed trees.

\vskip 3mm
{\bf Caution}: {\em All the directed trees considered here are assumed to be rooted and countably infinite.}

\vskip 3mm
Weighted shifts on  directed trees are defined as follows (see \cite{j-j-s-2012-mams}). Let $\tcal=(V,E)$ be a directed tree and let $\lambdab=\{\lambda_v\}_{v \in V^{\circ}} \subseteq \C$. We define the map  $\varLambda_\tcal^\lambdab\colon \C^V\to \C^V$ via
   \begin{align*}
(\varLambda_\tcal^\lambdab f) (v) =
   \begin{cases}
\lambda_v \cdot f\big(\pa v\big) & \text{ if } v\in V^\circ,
   \\
0 & \text{ if } v=\koo.
   \end{cases}
   \end{align*}
By a {\em weighted shift on $\tcal$ with weights} $\lambdab=\{\lambda_v\}_{v \in V^{\circ}} \subseteq \C$, we mean the operator $\slam$ in $\ell^2(V)$ defined as follows
   \begin{align*}
   \begin{aligned}
\dz {\slam} & = \big\{f \in \ell^2(V) \colon \varLambda_\tcal^\lambdab f \in \ell^2(V)\big\},
   \\
\slam f & = \varLambda_\tcal^\lambdab f, \quad f \in \dz{\slam}.
\end{aligned}
\end{align*}
We deal with weighted shifts with positive weights throughout the paper. Since any weighted shift with non-zero weights is unitarily equivalent to a weighted shift with positive weights (see \cite{j-j-s-2012-mams}), the assumption of positivity of weights is not restrictive.

To avoid further repetitions we gather below the most basic assumptions: 
\begin{align} \label{stand2}\tag{$\star$}
   \begin{minipage}{70ex}
$\tcal=(V,E)$ is a countably infinite rooted and leafless directed tree,  and $\lambdab=\{\lambda_v\}_{v \in V^\circ}\subseteq  (0,\infty)$,
   \end{minipage}
   \end{align}
  and
\begin{align} \label{stand1}\tag{$\dag$}
   \begin{minipage}{70ex}
$\tcal=(V,E)$ is a countably infinite rooted directed tree, and \\$\lambdab=\{\lambda_v\}_{v \in V^\circ} \subseteq  (0,\infty)$.
   \end{minipage}
   \end{align}
In our previous work we used a notion of a multiplier algebra induced by a weighted shift, which is defined via related multiplication operators. These are given as follows. 
Given $u\in V$ and $v\in\dess(u)$ we set 
\begin{align*}
\lambda_{u|v}=\begin{cases}
 1 & \text{ if } u=v,\\
 \prod_{n=0}^{k-1} \lambda_{\pan{n}{v}} & \text{ if } \pan{k}{v}=u \text{ for } k \in \N.\end{cases}    
\end{align*} 
Assume \eqref{stand2}. Let $\hat\varphi \colon \N_0 \to \C$. Define the mapping $\varGamma_{\hat\varphi}^\lambdab\colon \C^V\to \C^V$ by the formula
\begin{align}\label{multiKL1}
\big(\varGamma_{\hat\varphi}^\lambdab f\big)(v) =\sum_{k=0}^{|v|} \lambda_{\paa^k(v)|v} \, \hat\varphi (k) f\big(\paa^k(v)\big),\quad v\in V.
\end{align}
The {\em multiplication operator} $M_{\hat\varphi}^{\lambdab}\colon \ell^2(V)\supseteq \dzD{M_{\hat\varphi}^{\lambdab}}\to \ell^2(V)$  is given by
   \begin{align*} 
   \begin{aligned}
\dzD{M_{\hat\varphi}^{\lambdab}} & = \big\{f \in \ell^2(V) \colon \varGamma_{\hat\varphi}^\lambdab f \in \ell^2(V)\big\},
   \\
M_{\hat\varphi}^{\lambdab} f & = \varGamma_{\hat\varphi}^\lambdab f, \quad f \in \dzD{M_{\hat\varphi}^{\lambdab}}.
\end{aligned}
\end{align*}
It is easily seen that for $u\in V$ such that $e_u\in\dzD{\mphi^\lambdab}$ we have
\begin{align}\label{dziobak}
\big(\mphi^\lambdab e_u\big)(v)=\left\{\begin{array}{cl}
\lambda_{u|v} \hat\varphi\big(|v|-|u|\big) & \text{ if } v\in\dess(u),\\
0 & \text{ otherwise}.
\end{array}\right.
\end{align}

Let $\multi$ denote the {\em multiplier algebra} induced by $\slam$, i.e., the commutative Banach algebra consisting of all $\hat\varphi\colon \N_0\to \C$ such that $\dzD{\mphi^\lambdab}= \ell^2(V)$ with the norm
\begin{align*}
\|\hat\varphi\|:=\big\|\mphi^\lambdab\big\|,\quad \hat\varphi\in \multi.
\end{align*}
Every $\hphi \in \multi$ is called a {\em multiplier} for $\slam$.
For more information on $\multi$ we refer the reader to \cite{b-d-p-2015}.

\section{Generalized multipliers}
In this section, we are going to state the definition of generalized multipliers for left-invertible analytic operator.   
Let us recall that
 $T\in\bsb(\hh)$ is \textit{left-invertible} if there is an operator $A\in\bsb(\hh)$ such that $AT=I$, and \textit{analytic} if $\bigcap_{n=0}^\infty T^n(\hh)=\{0\}$.
It is known that $T$ is left-invertible if and only if $T$ is bounded from below, i.e. there exists constant $\alpha>0$ such that $T^*T\geqslant \alpha I$. Let $T\in\bsb(\hh)$ be left-invertible. Define $\ee_T=\nul{T^*}$ and an operator $T':=T(T^*T)^{-1}$. If it is clear which operator stands for $T$, we will simply write $\ee$ instead of $\ee_T$. The operator $T'$ is \textit{called the operator Cauchy dual of} $T$. 
In what follows we denote the operator $T'^*$ by $L$.
It is easily seen that \begin{equation}\label{leftinverse}
 LT=I, \quad \nul{L}=\ee_T, \quad P_{\ee_T} = I - TL  \quad \text{and} \quad  L P_{\ee_T} =0.
 \end{equation}
 Denote by $\aa(\ee)$ the set of all $\ee$-valued analytic functions on $\Dbb_r$, where $r:=\frac{1}{r(L)}$. By definition $\ff\in\aa(\ee)$ if there is a sequence $\{\hat\ff(n)\}_{n=0}^\infty\subset \ee$ such that $\ff(z)=\sum_{n=0}^\infty \hat\ff(n)z^n$ for every $z\in\Dbb_r$.
 
 Now, we are ready to recall the Shimorin's model for a left-invertible analytic operator $T \in \bsb(\hh)$ (see \cite{shim}).
 Shimorin showed that the function 
\begin{equation}\label{ufor}(Uf)(z)=\sum_{n=0}^\infty(P_\ee L^nf)z^n,  \quad z\in \Dbb_r.\end{equation}
is well-defined and analytic in $\Dbb_r$ for every $f\in\hh$.
Moreover, transformation $U\colon\hh\to\aa(\ee)$ is injective, since $T$ is analytic. This enables him to define a scalar product on $U(\hh)$, such that $U$ is an isometry. By $\calh$ we denote the set $U(\hh)$, which is a Hilbert space of $\ee$-valued analytic functions.
By \cite[pp.154]{shim} the operator $T$ is unitary equivalent to the operator $\unitT$ of multiplication by $z$ on $\calh$ and $L$ is unitary equivalent to the operator $\unitL \in \bsb(\calh)$ given by the formula
\begin{align*}
    \unitL(\ff)(z) = \frac{\ff(z) - \ff(0)}{z}.
\end{align*}
In particular,
\begin{align} \label{unity}
    LU^* = U^* \unitL, \quad TU^* = U^* \unitT,
\end{align}
which can be expressed on the diagram as:

\begin{equation*}
\begin{split}
\xymatrix{
\calh \ar[d]_{U^*} \ar@<0.4ex>@{->}[r]^{\unitT} \ar@<-0.4ex>@{<-}[r]_{\unitL} & \calh \ar[d]^{{U}^*} \\
\hh \ar@<0.4ex>@{<-}[r]^{L} \ar@<-0.4ex>@{->}[r]_T & \hh,}
\end{split} \end{equation*}

Let us note that every $\ff\in \calh$ can be represented as follows
\begin{equation} \label{rozklad}
\ff=\sum_{n=0}^\infty \hat\ff(n) z^n,
\end{equation}
where $\hat\ff(n):=P_\ee L^nU^*\ff$ for $n\in\N_0$.
It is worth to notice the following observation: 
\begin{align}\label{tomek}
\begin{minipage}{60ex}
If $\{\ff_n \}_{n=0}^\infty \subset \calh$ is such that $\ff_n$ converges to some $\ff \in \calh$, then $\hat{\ff}_n(m) \toe \hat{\ff}(m)$ for every $m \in \N_0$.
\end{minipage}
\end{align}
Let $k_{\calh}$ be the reproducing kernel of $\calh$, i.e., $k_{\calh} \colon \Dbb_r \times \Dbb_r \to \bsb(\ee)$ is such that 
\begin{equation*}
    k_\calh(z,\lambda)=P_\ee(I-zL)^{-1}(I-\bar{\lambda}L^*)^{-1}|_\ee.
\end{equation*}
In particular for every $ e \in \ee$ and $\lambda \in \Dbb_r$ the function $k_\calh$ satisfies the following conditions \cite[pp.154--155]{shim}:
\begin{align} \label{wlasnosci}
k_{\calh}(\cdot, \lambda) e \in \calh, \quad \text{and }
\langle \ff(\lambda), e \rangle_{\ee} = \big\langle \ff, k_{\calh} (\cdot, \lambda)e \big\rangle_{\calh}, \text{ for every } \ff \in \calh.
\end{align}
In our previous work \cite{b-d-p-2015} we used a notion of a multiplier algebra induced by a weighted shift, which is defined via related multiplication operators. We are going to extend this notion to the  case of $\ee_T$-valued analytic functions.   
Let us consider the Cauchy-type multiplication $*\colon\bsb(\ee)^{\N_0}\times\ee^{\N_0}\to\ee^{\N_0}$ given by 
\begin{equation}
\label{multi0}
(\hphi*\hat{f})(n) = \sum_{k=0}^n \hphi(k)\hat{f}(n-k),  \quad \hphi\in\bsb(\ee)^{\N_0},\, \hat{f}\in\ee^{\N_0}.
\end{equation} 
The {\em multiplication operator} $\mphi\colon \calh\supseteq \dz{\mphi}\to \calh$ is defined as follows. The domain is given by
   \begin{align*}
   \begin{aligned}
\dz{\mphi} & = \big\{\ff \in \calh \colon \text{ there is } \gg\in\calh \text{ such that }\hphi*\hat{\ff}=\hat{\gg}\big\}.
\end{aligned}
\end{align*}
 By the uniqueness of power series for every $\ff\in\dz{\mphi}$ there exists exactly one $\gg\in\calh$ satisfying equality $\hphi*\hat{\ff}=\hat{\gg}$. In this situation, we set 
    \begin{align}
   \begin{aligned}\label{multi3}
\mphi \ff & = \gg, \quad \ff \in \dz{\mphi}.
\end{aligned}
\end{align}
We call $\hat\varphi \colon \N_0\to \bsb(\ee)$ the {\em symbol} of $\mphi$. 
Below, we prove that any multiplication operator $\mphi$ is closed. 
\begin{lem}\label{multipod}
Let $\hphi \colon \N_0 \to \bsb(\ee)$. Then the following conditions hold: 
\begin{enumerate}
\item[(i)] for every $e\in\ee$ and $n\in\N_0$, $(\hphi*\widehat{Ue})(n)=\hphi(n)e$,
\item[(ii)] for every $\ff\in\dz{\mphi}$ and $n\in\N_0$, we have the equality
\begin{align}\label{multi1}
\widehat{\mphi \ff}(n) = \sum_{k=0}^n \hphi(k) \hat \ff(n-k)= \sum_{k=0}^n \hphi(k) P_{\ee} L^{n-k}U^* \ff,
\end{align}
 \item[(iii)] for every every $\ff\in\dz{\mphi}$ and $z\in\Dbb_r$, the series $\sum\limits_{n=0}^\infty \big(\sum\limits_{k=0}^n \hphi(k) P_{\ee} L^{n-k}U^* \ff\big)z^n$ is convergent  and  $$\big(\mphi\ff\big)(z)=\sum_{n=0}^\infty \Big(\sum_{k=0}^n \hphi(k) P_{\ee} L^{n-k}U^* \ff\Big)z^n.$$
 \item[(iv)]the operator $\mphi$ is closed.
\end{enumerate}
\end{lem}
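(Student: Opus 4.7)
The plan is to unwind the definitions of the Cauchy-type multiplication $*$ and of $\mphi$, and then to lift closedness from the coefficient-continuity observation \eqref{tomek}.

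For (i), I would compute directly: since $U$ is an isometry, $\widehat{Ue}(n) = P_\ee L^n U^*(Ue) = P_\ee L^n e$ for $e\in\ee$, and because $\nul{L} = \ee$ by \eqref{leftinverse}, only $n=0$ contributes, giving $\widehat{Ue}(0) = e$ and $\widehat{Ue}(n) = 0$ for $n \ge 1$. Substituting into \eqref{multi0} collapses the sum to $\hphi(n)e$. Part (ii) is essentially a rereading of the definition \eqref{multi3}: when $\ff \in \dz{\mphi}$ and $\gg = \mphi\ff$, we have $\hat\gg = \hphi * \hat\ff$ by construction, so the first equality in \eqref{multi1} is the formula \eqref{multi0} for the Cauchy product, and the second equality comes from the identity $\hat\ff(m) = P_\ee L^m U^*\ff$ recorded in \eqref{rozklad}. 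Part (iii) is then automatic: $\mphi\ff \in \calh \subset \aa(\ee)$, hence the Taylor expansion $\sum_n \widehat{\mphi\ff}(n) z^n$ converges on $\Dbb_r$ to $(\mphi\ff)(z)$ by \eqref{rozklad}, and, by (ii), the $n$-th coefficient is precisely $\sum_{k=0}^n \hphi(k) P_\ee L^{n-k} U^*\ff$.

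The interesting item is (iv). Given $\{\ff_m\}_{m=0}^\infty \subset \dz{\mphi}$ with $\ff_m \to \ff$ in $\calh$ and $\mphi\ff_m \to \gg$ in $\calh$, the observation \eqref{tomek} yields coefficient convergence $\hat\ff_m(k) \toe \hat\ff(k)$ and $\widehat{\mphi\ff_m}(k) \toe \hat\gg(k)$ for every $k \in \N_0$. For each fixed $n$, the Cauchy product $(\hphi * \hat\ff_m)(n) = \sum_{k=0}^n \hphi(k)\, \hat\ff_m(n-k)$ is a \emph{finite} sum of bounded linear expressions in $\hat\ff_m(0),\ldots,\hat\ff_m(n)$, so one may pass to the limit term by term and, combined with (ii), obtain $(\hphi * \hat\ff)(n) = \hat\gg(n)$ for every $n\in\N_0$. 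Hence $\ff \in \dz{\mphi}$ and $\mphi\ff = \gg$, proving closedness. No genuine obstacle arises; the key point is merely that the convolution sum is finite for each $n$, so no uniform-in-$n$ estimate on $\hphi$ is required, and both the domain characterization and closedness fall out from \eqref{tomek} alone.
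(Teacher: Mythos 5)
Your proposal is correct and follows essentially the same route as the paper: (i)--(iii) are the same direct unwinding of \eqref{multi0}, \eqref{multi3} and \eqref{rozklad} together with $\nul{L}=\ee$, and for (iv) the paper likewise passes to the limit in the finite convolution sum $\sum_{k=0}^{m}\hphi(k)P_\ee L^{m-k}U^*\ff_n$ using continuity of the coefficient maps (which is exactly what \eqref{tomek} encodes) and identifies the limit with $\hat\gg(m)=P_\ee L^m U^*\gg$. Your remark that no uniform-in-$n$ bound on $\hphi$ is needed because each convolution sum is finite is precisely the point the paper's argument relies on.
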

\begin{proof} (i) Fix $e\in\ee$ and let $\ff=Ue$. Then by \eqref{multi0}, \eqref{rozklad}, and equality $\nul{L} = \ee$ we get 
\begin{align*}
\big(\hphi*\widehat{Ue}\big)(n) =\sum_{k=0}^n \hphi(k) P_{\ee} L^{n-k}e=\hphi(n)e,  \quad n\in\N_0.
\end{align*}

(ii) It follows from \eqref{multi3}, \eqref{multi0}, and \eqref{rozklad}.

(iii) See \cite[Equality (2.3)]{shim}.

(iv)
Let $\{\ff_n\}_{n=0}^\infty \subset \dz{\mphi}$ be such that $\ff_n \to \ff$ in $\calh$  and $\mphi \ff_n \to \gg$ for some $\ff, \gg \in \calh$. Then $U^* \ff_n \to U^* \ff$. By \eqref{multi1} and continuity of all considered operators, for any $m\in\N_0$, we obtain
\begin{equation}\label{mphif}\widehat{M_{\hat{\varphi}} \ff_n}(m)=\sum_{k=0}^m\hphi(k)P_\ee L^{m-k}U^*\ff_n\to \sum_{k=0}^m\hphi(k)P_\ee L^{m-k}U^*\ff. \end{equation}
From the convergence of $\{\mphi \ff_n\}_{n=0}^\infty$ we deduce that $U^*\mphi \ff_n \to U^*\gg$. Thus 
$$\widehat{M_{\hat{\varphi}} \ff_n}(m)= P_\ee L^m U^*\mphi \ff_n \to P_\ee L^m U^*\gg=\hat{\gg}(m).$$
This and \eqref{mphif} imply that $\hat\gg(m)=\sum_{k=0}^m\hphi(k)P_\ee L^{m-k}U^*\ff$ for every $m\in\N_0$. Hence $\hphi*\hat{\ff}=\hat{\gg}$. Consequently, $\ff\in\dz{\mphi}$ and $\gg=\mphi\ff$, which completes the proof.
\end{proof}
If $\dz{\mphi}=\calh$, then $\mphi\in\bsb(\calh)$ since $\mphi$ is closed. In this situation, we call $\hphi$ a \textit{generalized multiplier} of $T$ and $\mphi$ a \textit{generalized multiplication operator by} $\hphi$. 
By $\calm(T)$ we denote the set of all generalized multipliers of the operator $T$. Let us notice that $\calm(T)$ is a linear subspace of $\bsb(\ee)^{\N_0}$ and the function $\| \cdot \| \colon \calm(T) \to [0,\infty)$ given by the formula 
$$\|\hat\varphi\|:=\|\mphi\|,\quad \hat\varphi\in \calm(T)$$
is a norm on $\calm(T)$, which can be deduced from Lemma \ref{multipod}(i) and the linearity of transformation $\calm(T)\ni\hphi\to \mphi\in\bsb(\calh)$.
By $\multiskal(T)$ we denote the linear subspace of $\calm(T)$ consisting of all generalized multipliers whose all coefficients are scalar multiples of the identity operator.
For a given operator $A \in \bsb(\hh)$ let $\hphi_{A} \colon \N_0 \to \bsb(\ee)$ be a sequence defined by the formula
\begin{align}\label{jani}
\hphi_{A}(m) &= P_{\ee} L^m A|_{\ee}, \quad m \in \N_0.
\end{align} 
The space $\calm(T)$ turns out to have a natural Banach algebra structure. It suffices to endow it with the Cauchy-type multiplication $*\colon \bsb(\ee)^{\N_0}\times\bsb(\ee)^{\N_0}\to \bsb(\ee)^{\N_0}$ given by
\begin{align}\label{cauchymult}
\big(\hat\varphi*\hat\psi\big) (k) = \sum_{j=0}^{k} \hat\varphi(j)\hat\psi(k-j),\quad \hat\varphi, \hat\psi\in \bsb(\ee)^{\N_0}.
\end{align}
\begin{thm}\label{cauchym}
Let $T\in\bsb(\hh)$ be left-invertible and analytic. Then following assertions are satisfied$:$
\begin{enumerate}
\item[(i)] For every $n\in\N_0$, the sequence $\chi_{\{n\}}\I_{\ee}$ is a generalized multiplier and $\unitT^n=M_{\chi_{\{n\}}\I_{\ee}}$.
\item[(ii)] If $\hphi \in \calm(T)$, then $\mphi$ commutes with $\unitT$.
\item[(iii)] For all $\hat\varphi, \hat\psi\in \calm(T)$, the function $\hat\varphi*\hat\psi$ belongs to $\calm(T)$ and
\begin{align*}
\mphi M_{\hat\psi}= M_{\hat \varphi*\hat\psi}.
\end{align*}
\item[(iv)] The spaces $\calm(T)$, $\multiskal(T)$ endowed with 
the Cauchy-type multiplication, are a Banach algebras with a unit $\chi_{\{0\}}\I_\ee$. In addition, $\multiskal(T)$ is commutative.
\end{enumerate}
\end{thm}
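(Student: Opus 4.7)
The plan is to work through the four parts mostly by direct computation using Lemma~\ref{multipod}, with completeness of $\calm(T)$ being the one spot that needs care.

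For \textbf{(i)} I would take $\hphi=\chi_{\{n\}}\I_\ee$, so that for any $\ff\in\calh$ and $m\in\N_0$ the Cauchy product reduces to a single term:
\[
\bigl(\hphi*\hat\ff\bigr)(m)=\sum_{k=0}^{m}\chi_{\{n\}}(k)\I_\ee\,\hat\ff(m-k)=\chi_{\{m\geq n\}}\hat\ff(m-n).
\]
By the representation $\ff(z)=\sum_{m\geq 0}\hat\ff(m)z^m$ this is exactly the coefficient sequence of $z^n\ff(z)=(\unitT^n\ff)(z)$, which belongs to $\calh$ since $\unitT^n\in\bsb(\calh)$. Hence $\ff\in\dz{\mphi}$, so $\hphi\in\calm(T)$, and $\mphi=\unitT^n$. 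For \textbf{(ii)} I would just compare coefficients: using Lemma~\ref{multipod}(ii),
\[
\widehat{\mphi\unitT\ff}(m)=\sum_{k=0}^{m}\hphi(k)\widehat{\unitT\ff}(m-k)=\sum_{k=0}^{m-1}\hphi(k)\hat\ff(m-1-k)=\widehat{\mphi\ff}(m-1)=\widehat{\unitT\mphi\ff}(m),
\]
for $m\geq 1$, with both sides vanishing at $m=0$; uniqueness of the expansion~\eqref{rozklad} gives $\mphi\unitT=\unitT\mphi$.

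For \textbf{(iii)}, the heart of the theorem, I would take $\ff\in\calh$ and expand twice using Lemma~\ref{multipod}(ii), then reindex by $l=k+j$:
\[
\widehat{\mphi\mpsi\ff}(m)=\sum_{k=0}^{m}\hphi(k)\sum_{j=0}^{m-k}\hpsi(j)\hat\ff(m-k-j)=\sum_{l=0}^{m}\Bigl(\sum_{k=0}^{l}\hphi(k)\hpsi(l-k)\Bigr)\hat\ff(m-l)=\bigl((\hphi*\hpsi)*\hat\ff\bigr)(m).
\]
Since $\mphi\mpsi\ff\in\calh$ and its coefficient sequence equals $(\hphi*\hpsi)*\hat\ff$, the definition of $\dz{M_{\hphi*\hpsi}}$ yields $\ff\in\dz{M_{\hphi*\hpsi}}$ with $M_{\hphi*\hpsi}\ff=\mphi\mpsi\ff$; thus $\dz{M_{\hphi*\hpsi}}=\calh$, i.e.\ $\hphi*\hpsi\in\calm(T)$, and $M_{\hphi*\hpsi}=\mphi\mpsi$.

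For \textbf{(iv)}, associativity of $*$ is a standard triple-sum reindexing, the unit is $\chi_{\{0\}}\I_\ee$ by part (i) with $n=0$, submultiplicativity $\|\hphi*\hpsi\|\le\|\hphi\|\,\|\hpsi\|$ follows from part (iii) and the operator-norm inequality $\|\mphi\mpsi\|\le\|\mphi\|\,\|\mpsi\|$, and commutativity of $\multiskal(T)$ is immediate because scalars in $\bsb(\ee)$ commute with each other. The only nontrivial point is \emph{completeness}: given a Cauchy sequence $\{\hphi_n\}\subset\calm(T)$, the sequence $\{M_{\hphi_n}\}$ is Cauchy in $\bsb(\calh)$ and converges in operator norm to some $M\in\bsb(\calh)$; using Lemma~\ref{multipod}(i) applied to $e\in\ee$ together with~\eqref{tomek}, the coefficients satisfy
\[
\hphi_n(k)e=\bigl(\hphi_n*\widehat{Ue}\bigr)(k)=\widehat{M_{\hphi_n}Ue}(k)\;\xrightarrow[n\to\infty]{}\;\widehat{MUe}(k),
\]
so setting $\hphi(k)e:=\widehat{MUe}(k)$ defines a sequence $\hphi\in\bsb(\ee)^{\N_0}$ with $\hphi_n(k)\to\hphi(k)$ in norm for each $k$. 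A final coefficient-wise check (the sum in Lemma~\ref{multipod}(ii) has only finitely many terms, so coefficient convergence passes to the limit) shows $\widehat{M\ff}(m)=\sum_{k=0}^{m}\hphi(k)\hat\ff(m-k)$ for every $\ff\in\calh$, giving $M=\mphi$ and hence $\hphi\in\calm(T)$ with $\|\hphi_n-\hphi\|\to 0$. Closedness of $\multiskal(T)$ inside $\calm(T)$ is then automatic since the set of scalar multiples of $\I_\ee$ is closed in $\bsb(\ee)$. I expect this completeness argument to be the main obstacle, essentially because one must recover the symbol from the limiting operator rather than from the Cauchy sequence of symbols directly.
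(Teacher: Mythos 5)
Your proposal is correct and follows essentially the same route as the paper: parts (i) and (iii) are the identical Cauchy-product computations, part (ii) differs only in that you compare coefficients where the paper compares the power series pointwise, and in part (iv) your definition $\hphi(k)e:=\widehat{MUe}(k)=P_\ee L^kU^*MUe$ is exactly the paper's symbol $\hphi_{U^*\cala U}$ recovered from the limiting operator, with the same coefficient-wise verification that $M=\mphi$. (The only cosmetic overstatement is claiming $\hphi_n(k)\to\hphi(k)$ in norm when the cited facts give strong convergence, which is all the finite-sum limit argument needs; norm convergence does in fact hold via $\|\hphi_n(k)-\hphi(k)\|\le\|L\|^k\|M_{\hphi_n}-M\|$, so nothing is lost.)
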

\begin{proof}
(i) Let $\ff \in \calh$. Define $\gg = \unitT^n \ff$ and $\hphi=\chi_{\{n\}}\I_{\ee}$. Then $ \hphi  * \hat{\ff} = \hat{\gg}$. Thus $\dz{\mphi} =
\calh$. Hence, by \eqref{multi3}, the equality $\mphi\ff=\unitT^n \ff$ holds for every $ \ff \in \calh$.

(ii) For every $\ff \in \calh$ and $z \in \Dbb_r$, by Lemma \ref{multipod}(iii), \eqref{unity} and \eqref{leftinverse} the following equalities hold
\begin{align*}
   \big(\mphi \unitT \ff\big)(z) &=  \sum_{n=0}^\infty \Big(\sum_{k=0}^n \hphi(k)P_{\ee}L^{n-k} U^*\unitT \ff \Big)z^n 
    = \sum_{n=1}^\infty \Big(\sum_{k=0}^{n-1} \hphi(k)P_{\ee}L^{n-1-k} U^* \ff \Big)z^n \\
    &= z \sum_{n=0}^\infty \Big(\sum_{k=0}^{n} \hphi(k)P_{\ee}L^{n-k} U^* \ff \Big)z^{n} 
    = z \mphi \ff(z)  = \big(\unitT \mphi \ff\big)(z).
\end{align*}
(iii) First, notice that by \eqref{multi3}, \eqref{multi0}, changing the order of summation and \eqref{cauchymult} we have the following equalities
\begin{align*}
    (\mphi\mpsi \ff)\widehat{ \phantom{a} }\  (n)&= 
     \sum_{k=0}^n \hphi(k)\widehat{\mpsi \ff}(n-k) 
    = \sum_{k=0}^n \hphi(k)\sum_{j=0}^{n-k} \hat\psi(j) \hat{\ff}(n-k-j)  \\
    &= \sum_{l=0}^n\sum_{j=0}^{l} \hphi(j) \hat\psi(l-j)\hat{\ff}(n-l)
    = \sum_{l=0}^n(\hphi * \hat\psi)(l)\hat{\ff}(n-l), \quad \ff \in \calh, \, n\in\N_0.
\end{align*}
Hence $ (\hphi * \hpsi)* \hat{\ff} = (\mphi\mpsi \ff)\widehat{ \phantom{a} }$ for every $\ff\in\calh$. Thus, $\dz{M_{\hat\varphi *\hat\psi}}=\calh$ and $M_{\hat\varphi *\hat\psi}= \mphi \mpsi$.

(iv) Let $\{ \hphin\}_{n=0}^\infty  \subset \calm(T)$ be a Cauchy sequence. Then there exists an operator $\cala \in \bsb(\calh)$ such that $\lim_{n\to \infty} \mphin = \cala$. Thus \eqref{tomek} implies that 
\begin{align}\label{granica}
 \widehat{\mphin \ff}(m) \toe \widehat{\cala \ff}(m), \quad \ff\in\calh,\, m\in\N_0. 
\end{align} Let us define a sequence $\hphi:=\hphi_{U^*\cala U}$ as in \eqref{jani}. First, we are going to prove that $\hphin(m)\tosot\hphi(m)$ for every $m\in\N_0$. Fix $e\in\ee$ and let $\gg=Ue$.
Then by Lemma \ref{multipod} (i), \eqref{granica}, \eqref{rozklad} and \eqref{jani} we obtain
\begin{align*} 
   \hphin(m)e =  \widehat{\mphin\gg}(m)\toe \widehat{\cala\gg}(m) = P_{\ee}L^mU^*\cala Ue=\hphi(m)e, \quad m \in \N_0.
\end{align*}
 This proves that $\hphin(m)\tosot\hphi(m)$ for every $m\in\N_0$. Moreover, if $\{ \hphin\}_{n=0}^\infty  \subset \multiskal(T)$ then $\hphi(m)$ is a scalar multiple of the identity operator for every $m \in \N_0$. By  \eqref{granica}, \eqref{multi1}, the SOT-convergence of $\{\hphi_n\}_{n=0}^\infty$, \eqref{rozklad} and \eqref{multi0},
we get the equality $\widehat{\cala \ff}(m)=(\hphi*\hat{\ff})(m)$ for every $m\in\N_0$ and $\ff \in \calh$. Hence $\dz{\mphi} = \calh$, $\hphi \in \calm(T)$ (resp. $\hphi\in\multiskal(T)$), and $\cala  = \mphi$. The other conditions are satisfied from the definition and (iii).
\end{proof}
It is worth to notice that every sequence of complex numbers with a finite support is a multiplier for a weighted shift on rooted directed tree. However, the sequence $\hphi \colon \N_0 \to \bsb(\ee)$ with finite support does not have to be a generalized multiplier. Moreover, the operation of permutation of even two coefficients of generalized multiplier is not closed in this space. This shows that the structure of generalized multipliers is very delicate.

\begin{figure}[ht]
\begin{tikzpicture}[scale=0.8, transform shape,edge from parent/.style={draw,to}]
\tikzstyle{every node} = [circle,fill=gray!30]
\node (e10)[font=\footnotesize, inner sep = 1pt] at (0,0) {$(0,0)$};

\node (e11)[font=\footnotesize, inner sep = 1pt] at (3,1) {$(1,1)$};
\node (e12)[font=\footnotesize, inner sep = 1pt] at (6,1) {$(1,2)$};
\node (e13)[font=\footnotesize, inner sep = 1pt] at (9,1) {$(1,3)$};
\node[fill = none] (e1n) at(12,1) {};

\node (f11)[font=\footnotesize, inner sep = 1pt] at (3,-1) {$(2,1)$};
\node (f12)[font=\footnotesize, inner sep = 1pt] at (6,-1) {$(2,2)$};
\node (f13)[font=\footnotesize, inner sep = 1pt] at (9,-1) {$(2,3)$};
\node[fill = none] (f1n) at(12,-1) {};

\draw[->=stealth] (e10) --(e11) node[pos=0.5,above = 0pt,fill=none] {$1$};
\draw[->] (e11) --(e12) node[pos=0.5,above = 0pt,fill=none] {$1$};
\draw[->] (e12) --(e13) node[pos=0.5,above = 0pt,fill=none] {$1$};
\draw[dashed, ->] (e13)--(e1n);
\draw[->] (e10) --(f11) node[pos=0.5,below = 0pt,fill=none] {$\alpha$};
\draw[->] (f11) --(f12) node[pos=0.5,below = 0pt,fill=none] {$\alpha$};
\draw[->] (f12) --(f13) node[pos=0.5,below = 0pt,fill=none] {$\alpha$};
\draw[dashed, ->] (f13)--(f1n);
\end{tikzpicture}
\caption{\label{truffaz-fig}}
\end{figure}
\begin{exa}\label{1alfa}
Let $\tcal_{2} = (V_{2},{E_{2}})$ be the directed tree with one branching vertex, given by (see Figure \ref{truffaz-fig})\allowdisplaybreaks
\begin{align*}
    V_{2} &= \big\{(0,0) \big\} \cup \big\{ (i,j) \colon  i\in\{1,2\},\ j \in \N \big\}, \\
    E_{2} &= \Big\{ \big((0,0),(i,1)\big) \colon i\in\{1,2\} \Big\}\cup \Big\{ \big((i,j),(i,j+1)\big) \colon i\in\{1,2\},\ j \in \N \Big\}.
\end{align*}
Let $\alpha \in (0,1)$. Let $\slam$ be a weighted shift on $\tcal_{2}$ with weights $\lambdab = \{ \lambda_v\}_{v \in V_{2}^\circ}$ defined as follows
\begin{align*}
\lambda_{(i,j)} 
= \left\{ 
\begin{array}{cl} 
1 & \text{ for } i=1 \text{ and } j\in\N, \\ 
\alpha & \text{ for } i=2 \text{ and } j\in\N.
\end{array} 
\right.
\end{align*}
Then $\slam \in \bsb(\ell^2(V_2))$ is left-invertible and analyic, $\big\{ e_{00}, \alpha e_{11} - e_{21}\big\}$ is a basis of $\nul{\slam^*}$, and 
$$ \slam^k (e_{00}) = e_{1k} + \alpha^k e_{2k}, \quad \slam^k(\alpha e_{11} - e_{21}) = \alpha e_{1, k+1} - \alpha^k e_{2,k+1} \text{ for } k \in \N.$$
Moreover,
$$ \slam^* \slam e_{00} = (\alpha^2+1)e_{00}, \quad \slam^* \slam e_{1k} = e_{1k}, \quad \slam^*\slam e_{2k} = \alpha^2 e_{2k}\text{ for }  k \in \N.$$
Thus for $n \geq 1$ and $ f \in \ell^2(V_2)$ we have
$$ L^n f = \bigg(\frac{f(1,n)}{\alpha^2 +1} + \frac{f(2,n)}{\alpha^{n-2}(\alpha^2+1)}\bigg)e_{00} + \sum_{k=1}^\infty \bigg(f(1,k+n) e_{1k}  + \frac{f(2,k+n)}{\alpha^n}e_{2k}\bigg).$$
As a consequence
\begin{align} \label{rzut}
\begin{aligned}
  P_{\ee} L^n f &= \frac{1}{\alpha^2+1}\bigg(f(1,n) + \alpha^{2-n}f(2,n)\bigg)e_{00}\\ &+\frac{1}{\alpha^2+1} \bigg(\alpha f(1,1+n) - \alpha^{-n}f(2,1+n)\bigg)\big( \alpha e_{11} - e_{21}\big),\, n\geqslant 1.   
  \end{aligned}
\end{align}
Let $\hphi \colon \N_0 \to \bsb(\ee)$ be defined as $ \hphi(n) =\left\{ \begin{array}{cl}  A_0 &\text{ if } n =0 \\ 0 &\text{ if } n >0 \end{array}\right.$, where 
\begin{align*}
    A_0(e_{00})  = a e_{00} + c \big( \alpha e_{11} - e_{21}\big), \quad A_0( \alpha e_{11} - e_{21})  = b e_{00} + d \big( \alpha e_{11} - e_{21}\big), \, a, \, b, \, c, \, d \in \C.
\end{align*}
We are going to show that $\hphi\in\calm(\slam)$ if and only if $b=c=0$ and $a=d$. If $b=c=0$ and $a=d$ then by Theorem \ref{cauchym}(i) we get $\hphi \in \calm(\slam)$. 
Let us notice that $ \hphi \in \calm(\slam)$ if and only if for every $f \in \ell^2(V)$ there exists $g \in \ell^2(V)$ such that $A_0 P_{\ee} L^n f  =\widehat{\mphi  Uf}(n)  = P_{\ee} L^n g$ for every $n \in \N_0$. By \eqref{rzut}
\begin{multline*}
    A_0 P_{\ee}L^n f = \frac{1}{\alpha^2+1}\bigg(af(1,n) + a\alpha^{2-n}f(2,n)+b\alpha f(1,1+n) - b\alpha^{-n}f(2,1+n)\bigg)e_{00} \\ \quad + \frac{1}{\alpha^2+1}\bigg(cf(1,n) + c\alpha^{2-n}f(2,n)+d\alpha f(1,1+n) - d\alpha^{-n}f(2,1+n)\bigg)\big( \alpha e_{11} - e_{21}\big), \quad n\in\N
\end{multline*}
and 
\begin{align}\label{g1n}
\begin{aligned}
    (1+\alpha^2)g(1,n) &=\alpha cf(1,n-1)+(a+d\alpha^2)f(1,n) +b\alpha f(1,1+n) 
    \\ &+ c\alpha^{4-n}f(2,n-1)+ (a-d)\alpha^{2-n}f(2,n) -b\alpha^{-n}f(2,1+n), \quad n\geqslant2.
    \end{aligned}
\end{align}
Now, let $f\in\ell^2(V)$ be defined as follows
$f(u)= \left\{ \begin{array}{cl}  \alpha^{|u|} &\text{ if } u=(2,|u|) \text{ and } |u| \text{ is divisible by } 3, \\ 0 &\text{ otherwise. }\end{array}\right.$
By this and \eqref{g1n} we obtain
\begin{align*}
     \sum_{n=1}^\infty |g(1,3n)|^2 = \sum_{n=1}^\infty \frac{\alpha^4|a-d|^2}{(\alpha^2 +1)^2}.
\end{align*}
Since $ g \in \ell^2(V)$ we obtain that $a=d$. In a similar manner we prove that $b=c=0$.

Now, let $\hpsi \colon \N_0 \to \bsb(\ee)$ be defined as $ \hpsi(k) =\left\{ \begin{array}{cl}  A_k &\text{ if } k < 2 \\ 0 &\text{ if } k \geqslant 2 \end{array}\right.$, where 
\begin{align*}
    A_k =  \left( \begin{array}{cc} a_k & c_k \\ b_k & d_k \end{array} \right),
\end{align*}
with respect to the basis $\{e_{00},\alpha e_{11} - e_{21}\}$. Then, one can show similarily that $\hpsi$ is a generalized multiplier if and only if 
\begin{align*}
    A_0 =  \left( \begin{array}{cc} a_0 & 0 \\ \frac{d_1-a_1}{\alpha} & d_0 \end{array} \right) \text{ and } A_1 =  \left( \begin{array}{cc} a_1 & (a_0-d_0)\alpha \\ 0 & d_1 \end{array} \right).
\end{align*}
\end{exa}
The next theorem provides generalized analytic structure for the commutant of left-invertible, analytic operator.
\begin{thm} \label{komutant}
Let $T \in \bsb(\hh)$ be left-invertible and analytic. Assume that $A \in \bsb(\hh)$ commutes with $T$. Then $\hphi_A\in\calm(T)$ and $A = U^* M_{\hphi_A}U$.
\end{thm}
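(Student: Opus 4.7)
\medskip

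The plan is to show that $UAU^{*} = M_{\hphi_A}$ as operators on $\calh$, which is equivalent to the two conclusions $\hphi_A\in\calm(T)$ and $A=U^{*}M_{\hphi_A}U$. Since an element of $\calh$ is determined by its sequence of Taylor coefficients, I will verify, for every $h\in\hh$ and every $n\in\N_0$, the identity
\begin{equation}\label{prop-key}
P_{\ee}L^{n}Ah \;=\; \sum_{k=0}^{n}\hphi_A(k)\,P_{\ee}L^{n-k}h.
\end{equation}
Once \eqref{prop-key} is established, setting $\ff=Uh$ and recalling that $\hat{\ff}(m)=P_{\ee}L^{m}h$ from \eqref{rozklad}, the right-hand side is exactly $(\hphi_A*\hat{\ff})(n)$, while the left-hand side is the $n$-th Taylor coefficient of $UAh=UAU^{*}\ff\in\calh$. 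Hence $\hphi_A*\hat{\ff}=\widehat{UAU^{*}\ff}$ for every $\ff\in\calh$, which by \eqref{multi3} gives $\ff\in\dz{M_{\hphi_A}}$ and $M_{\hphi_A}\ff=UAU^{*}\ff$. Consequently $\dz{M_{\hphi_A}}=\calh$, so $\hphi_A\in\calm(T)$ and $A=U^{*}M_{\hphi_A}U$.

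To prove \eqref{prop-key}, the crucial input is the decomposition $I=P_{\ee}+TL$ from \eqref{leftinverse}, together with the commutation $AT=TA$. First I will record the observation $P_{\ee}T=(I-TL)T=T-T=0$. Applied to $A$, this gives $P_{\ee}AT=P_{\ee}TA=0$, hence for every $h\in\hh$
\[
P_{\ee}Ah=P_{\ee}A(P_{\ee}+TL)h=P_{\ee}AP_{\ee}h=\hphi_A(0)P_{\ee}h,
\]
which is the case $n=0$. For general $n$, I iterate the identity $A=AP_{\ee}+ATL=AP_{\ee}+TAL$ to obtain the telescoping expansion
\[
A=\sum_{k=0}^{n-1}T^{k}AP_{\ee}L^{k}+T^{n}AL^{n},
\]
valid on all of $\hh$. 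Applying $L^{n}$ on the left and using $LT=I$ (so that $L^{n}T^{k}=L^{n-k}$ for $0\le k\le n$) yields
\[
L^{n}Ah=\sum_{k=0}^{n-1}L^{n-k}AP_{\ee}L^{k}h+AL^{n}h.
\]
Projecting by $P_{\ee}$ and applying the $n=0$ case to the last term (with $h$ replaced by $L^{n}h$) gives
\[
P_{\ee}L^{n}Ah=\sum_{k=0}^{n-1}\hphi_A(n-k)P_{\ee}L^{k}h+\hphi_A(0)P_{\ee}L^{n}h,
\]
which after reindexing is exactly \eqref{prop-key}.

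The main obstacle is the telescoping step: one must resist expanding $L^{n}A$ directly (where $LA\ne AL$ in general) and instead push the commutation $AT=TA$ through the iterated splitting $I=P_{\ee}+TL$. The identity $P_{\ee}T=0$ is what makes the cross terms vanish and lets the coefficients $P_{\ee}L^{k}A|_{\ee}$ appear naturally, producing precisely the Cauchy-type convolution \eqref{cauchymult}. Everything else is then routine bookkeeping via the Shimorin intertwinings \eqref{unity}.
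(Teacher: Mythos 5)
Your proof is correct and rests on exactly the same ingredients as the paper's: the decomposition $I = P_\ee + TL$ from \eqref{leftinverse}, the consequent vanishing $P_\ee T = 0$, and pushing the commutation $AT=TA$ through the resulting iteration to produce the Cauchy convolution. The paper organizes this as an induction on the coefficient index, applying the inductive hypothesis to $\unitL\ff$ in the model space $\calh$, whereas your telescoping identity $A=\sum_{k=0}^{n-1}T^{k}AP_{\ee}L^{k}+T^{n}AL^{n}$ is precisely the closed form that this induction unwinds to, carried out directly in $\hh$ --- so the two arguments are essentially the same.
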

\begin{proof}
Let $\cala=UAU^*$.
By \eqref{leftinverse}, unitary equivalence, and commutation we get the equality
\begin{align}\label{nowe}
\begin{aligned} 
P_{\ee} AP_{\ee}U^* \ff &= P_{\ee} A (I - TL)U^* \ff = P_{\ee} AU^* \ff  - P_{\ee} ATLU^* \ff\\
&=P_{\ee} U^* \cala\ff  - P_{\ee} TALU^* \ff = P_{\ee} U^* \cala \ff, \text{ for every } \ff\in\calh.
\end{aligned}
\end{align}
Now, let $\hphi := \hphi_A$. We will prove by the induction that $(\hphi*\hat{\ff})(n) = \widehat{\cala \ff}(n)$ for every $n \in \N_0$ and $\ff \in \calh$. By \eqref{multi0}, \eqref{rozklad}, \eqref{jani} and \eqref{nowe} we obtain 
\begin{align*}
    (\hphi*\hat{\ff})(0)=\hphi(0)P_\ee U^*\ff=P_{\ee}A P_{\ee} U^* \ff=
    P_\ee U^*\cala \ff=\widehat{\cala \ff}(0), \quad \ff \in \calh.
\end{align*} 
Now let $n\in\N$. Then, by \eqref{multi0}, \eqref{rozklad}, \eqref{jani}, \eqref{leftinverse}, \eqref{unity}, inductive hypothesis, and commutation we obtain the following equalities
\begin{align*}
 (\hphi*\hat{\ff})(n)&=  \sum_{k=0}^n \hphi(k) P_{\ee} L^{n-k}U^* \ff
 = \sum_{k=0}^{n-1} \hphi(k) P_{\ee} L^{n-k}U^* \ff+ P_{\ee} L^n A P_{\ee} U^* \ff\\
 &= \sum_{k=0}^{n-1} \hphi(k) P_{\ee} L^{n-1-k}U^*\unitL \ff+P_{\ee} L^n AU^* \ff -P_{\ee} L^n ATL U^* \ff\\
 &= (\hphi*\widehat{\unitL \ff})(n-1)+P_{\ee} L^n U^* \cala \ff -P_{\ee} L^nT ALU^* \ff\\
 &= \widehat{\cala\unitL \ff}(n-1)+\widehat{\cala \ff}(n) -P_{\ee} L^{n-1} U^* \cala \unitL \ff\\
  &= \widehat{\cala\unitL \ff}(n-1)+\widehat{\cala \ff}(n)-\widehat{\cala \unitL\ff}(n-1)
  =\widehat{\cala \ff}(n).
\end{align*}
Hence, $\hphi \in \calm(T)$ and $\cala =\mphi$.
\end{proof}
By the above and Theorem \ref{cauchym}(ii) we deduce the following corollaries.
\begin{cor}
Let $T \in \bsb(\hh)$ be left-invertible and analytic. Then the commutant of $T$ is unitary equivalent to the algebra of all generalized multipliers of $T$. 
\end{cor}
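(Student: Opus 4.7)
The plan is to exhibit an explicit Banach algebra isomorphism $\Phi\colon\{T\}' \to \calm(T)$, where $\{T\}'$ denotes the commutant of $T$, realized via unitary conjugation by $U$. The natural candidate is $\Phi(A) = \hphi_A$, with $\hphi_A$ as defined in \eqref{jani}.

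First, I would check that $\Phi$ is well-defined and injective. If $A\in\{T\}'$, then Theorem \ref{komutant} immediately gives $\hphi_A\in\calm(T)$ together with the recovery formula $A = U^*M_{\hphi_A}U$. The recovery formula is the key to injectivity: if $\hphi_A = \hphi_B$ for $A,B\in\{T\}'$, then $M_{\hphi_A} = M_{\hphi_B}$, so $A = U^*M_{\hphi_A}U = U^*M_{\hphi_B}U = B$.

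Next, I would verify surjectivity. Given $\hphi\in\calm(T)$, set $A := U^*M_{\hphi}U \in \bsb(\hh)$. By Theorem \ref{cauchym}(ii), $M_{\hphi}$ commutes with $\unitT$; combining this with the intertwining relations \eqref{unity} shows that $A$ commutes with $T$, so $A\in\{T\}'$. Applying Theorem \ref{komutant} to this $A$ yields $A = U^*M_{\hphi_A}U$, hence $M_{\hphi_A} = M_{\hphi}$. Then Lemma \ref{multipod}(i) gives $\hphi_A(m)e = \widehat{M_{\hphi_A}Ue}(m) = \widehat{M_{\hphi}Ue}(m) = \hphi(m)e$ for every $e\in\ee$ and $m\in\N_0$, so $\hphi_A = \hphi$, proving $\Phi(A) = \hphi$.

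Finally, I would check that $\Phi$ is an isometric algebra homomorphism. Linearity is immediate from \eqref{jani}. The isometry property follows at once from $\|A\| = \|UAU^*\| = \|M_{\hphi_A}\| = \|\hphi_A\|$, where the last equality is the definition of the norm on $\calm(T)$. Multiplicativity follows from Theorem \ref{cauchym}(iii): for $A,B\in\{T\}'$, $U(AB)U^* = (UAU^*)(UBU^*) = M_{\hphi_A}M_{\hphi_B} = M_{\hphi_A*\hphi_B}$, and then applying Theorem \ref{komutant} to $AB$ (which still commutes with $T$) forces $\hphi_{AB} = \hphi_A*\hphi_B$ by the uniqueness argument above. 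There is no real obstacle here; the whole statement is essentially a bookkeeping exercise that repackages Theorems \ref{komutant} and \ref{cauchym} as a single isomorphism, the only point requiring a moment's care being the uniqueness of the symbol, which is supplied by Lemma \ref{multipod}(i).
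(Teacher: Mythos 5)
Your proposal is correct and follows essentially the same route as the paper, which deduces this corollary directly from Theorem \ref{komutant} (giving well-definedness, surjectivity via $A=U^*M_{\hphi_A}U$) together with Theorem \ref{cauchym} (giving that $U^*\mphi U$ commutes with $T$ and the multiplicativity of the correspondence). You merely spell out the bookkeeping --- injectivity, isometry, and uniqueness of the symbol via Lemma \ref{multipod}(i) --- which the paper leaves implicit.
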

\begin{cor} \label{wniosekkom}
Let $T \in \bsb(\hh)$ be left-invertible and analytic. Then the algebra of all generalized multipliers of $T$ is closed in \textrm{SOT} and \textrm{WOT} topology. 
\end{cor}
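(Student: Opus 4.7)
The plan is to reduce the claim to the classical fact that the commutant of a single bounded operator is closed in the weak (and hence strong) operator topology.

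By the immediately preceding corollary, the unitary $U$ from Shimorin's model implements, via $A \mapsto UAU^*$, a unitary equivalence between the commutant $\{T\}'\subset\bsb(\hh)$ of $T$ and the algebra $\{M_{\hphi} : \hphi \in \calm(T)\}\subset\bsb(\calh)$ of generalized multiplication operators. Since conjugation by a unitary is a $*$-isomorphism that is a homeomorphism in both WOT and SOT, it suffices to show that $\{T\}'$ is WOT-closed. This automatically yields SOT-closedness as well, because WOT is coarser than SOT and so every WOT-closed subset of $\bsb(\hh)$ is a fortiori SOT-closed.

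To verify WOT-closedness of $\{T\}'$, I would take a net $\{A_\alpha\}\subset\{T\}'$ with WOT-limit $A \in \bsb(\hh)$ and use the separate WOT-continuity of left and right multiplication by the fixed operator $T$. For arbitrary $x,y\in\hh$, the convergences $\langle A_\alpha Tx, y\rangle \to \langle ATx, y\rangle$ and $\langle T A_\alpha x, y\rangle=\langle A_\alpha x, T^*y\rangle\to\langle Ax, T^*y\rangle=\langle TAx, y\rangle$ combined with the identities $A_\alpha T = T A_\alpha$ force $\langle ATx,y\rangle=\langle TAx,y\rangle$, whence $AT=TA$ and $A\in\{T\}'$.

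There is no substantive obstacle; the argument uses only the previous corollary together with elementary facts about operator topologies. Alternatively, and perhaps more transparently, one can observe that $\{M_{\hphi}: \hphi \in \calm(T)\}=\{\unitT\}'$ follows from Theorem \ref{cauchym}(ii) (giving the inclusion $\subseteq$) together with Theorem \ref{komutant} applied to the operator $\unitT\in\bsb(\calh)$ itself (giving the reverse inclusion, since the Shimorin model of $\unitT$ is $\unitT$ itself). From that equality the conclusion is immediate, as the commutant of any single bounded operator is WOT-closed by the argument in the preceding paragraph.
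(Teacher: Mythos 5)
Your proof is correct and follows essentially the same route as the paper: the paper derives this corollary directly from Theorem \ref{komutant} together with Theorem \ref{cauchym}(ii), i.e.\ from the identification $\{M_{\hphi}\colon \hphi\in\calm(T)\}=U\{T\}'U^*=\{\unitT\}'$ and the standard fact that a commutant is WOT-closed (hence SOT-closed). No gaps.
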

At the end of this section we state two general lemmas, which will be used later in the proof of Proposition \ref{skalarne} in the context of weighted shifts on directed trees.
\begin{lem}\label{kosiarka} Let $T \in \bsb(\hh)$ be left-invertible and analytic.
If $\{ \hphin\}_{n=0}^\infty\cup\{\hphi\} \subset \calm(T)$ and $\mphin \tosot \mphi$, then $\hphin(m) \tosot \hphi(m)$ for every $m\in\N_0$.
\end{lem}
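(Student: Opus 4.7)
The plan is to show this by testing the SOT convergence on the special vectors $Ue$ for $e\in\ee$ and then extracting the individual Fourier-type coefficients by means of observation \eqref{tomek}.

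First, I would fix an arbitrary $e\in\ee$ and set $\gg = Ue \in \calh$. By Lemma~\ref{multipod}(i), applied to both $\hat\varphi_n$ and $\hat\varphi$, I have
\begin{align*}
\widehat{M_{\hat\varphi_n} \gg}(m) = \hat\varphi_n(m)\, e, \qquad \widehat{M_{\hat\varphi} \gg}(m) = \hat\varphi(m)\, e,
\end{align*}
for every $m \in \N_0$. So evaluating the $m$-th coefficient of $M_{\hat\varphi_?}(Ue)$ is precisely the operation that recovers the value of the symbol at $m$ applied to $e$.

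Next, I would invoke the SOT convergence $M_{\hat\varphi_n} \tosot M_{\hat\varphi}$ at the vector $\gg$: this yields $M_{\hat\varphi_n} \gg \to M_{\hat\varphi} \gg$ in the norm of $\calh$. Applying \eqref{tomek} to the sequence $\{M_{\hat\varphi_n} \gg\}_{n=0}^\infty$ with limit $M_{\hat\varphi} \gg$ gives, for every $m \in \N_0$,
\begin{align*}
\widehat{M_{\hat\varphi_n}\gg}(m) \toe \widehat{M_{\hat\varphi}\gg}(m).
\end{align*}
Combining this with the coefficient identities from the first step, we obtain $\hat\varphi_n(m) e \toe \hat\varphi(m) e$ in $\ee$. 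Since $e \in \ee$ was arbitrary, this is exactly $\hat\varphi_n(m) \tosot \hat\varphi(m)$ for each $m \in \N_0$.

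There is no real obstacle here; the argument is a short, clean combination of Lemma~\ref{multipod}(i) (which identifies $\hat\varphi(m)e$ as the $m$-th power series coefficient of $M_{\hat\varphi}(Ue)$) and observation \eqref{tomek} (which states that norm convergence in $\calh$ forces coefficient-wise convergence in $\ee$). The only thing to be careful about is that one must test SOT convergence on vectors of the form $Ue$ rather than arbitrary $\ff \in \calh$, so that the coefficient of the product reduces to just $\hat\varphi(m)e$ instead of a nontrivial Cauchy-type convolution.
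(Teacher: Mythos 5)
Your proof is correct and is essentially identical to the paper's own argument: the paper also fixes $e\in\ee$, tests the SOT convergence on $\ff=Ue$, and combines Lemma~\ref{multipod}(i) with observation \eqref{tomek} to conclude $\hphin(m)e\toe\hphi(m)e$. No issues.
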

\begin{proof}
Fix $e\in\ee$ and let $\ff=Ue$. 
 By our assumptions $\mphin \ff \toh \mphi \ff$. Then Lemma 1(i) and \eqref{tomek} imply
 \begin{align*} 
  \hphin(m)e = \widehat{\mphin \ff}(m) \toe \widehat{\mphi \ff}(m) = \hphi(m)e , \quad m\in\N_0.
 \end{align*}
 This proves the Lemma, since $e\in\ee$ is arbitrary.
\end{proof}
\begin{lem}\label{mnoz}
Let $T\in\bsb(\hh)$ be left-invertible and analytic, $\hphi\in\multiskal(T)$ and let $\{a_n\}_{n=0}^\infty\subset\C$ be such that $\hphi(n)=a_nI_\ee$ for every $n\in\N_0$. Then 
\begin{equation}\label{phiconv}
 \sum_{n=0}^\infty a_nz^n \text{ is convergent for every } z\in\Dbb_r
 \end{equation}
 and
\begin{equation}\label{wzormnoz}
(\mphi\ff)(z)=\left(\sum_{n=0}^\infty a_nz^n\right)\ff(z) \text{ for every } z\in\Dbb_r \text{ and } \ff\in\calh.
\end{equation}
\end{lem}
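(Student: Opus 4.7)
The plan is to reduce everything to Lemma \ref{multipod}(iii), which already provides the power-series expansion
\[
(\mphi\ff)(z)=\sum_{n=0}^\infty\Big(\sum_{k=0}^n \hphi(k)P_{\ee} L^{n-k}U^*\ff\Big)z^n,\quad z\in\Dbb_r.
\]
Under the assumption $\hphi(k)=a_k I_\ee$, this becomes $\sum_{n=0}^\infty\big(\sum_{k=0}^n a_k \hat\ff(n-k)\big)z^n$, i.e.\ exactly the Cauchy product of the scalar series $\sum_n a_n z^n$ with the $\ee$-valued expansion $\ff(z)=\sum_n\hat\ff(n)z^n$ supplied by \eqref{rozklad}. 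So the whole lemma will follow once convergence of $\sum_n a_n z^n$ on $\Dbb_r$ is established.

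To establish \eqref{phiconv}, I would test $\mphi$ on a concrete vector. Assuming $\ee\neq\{0\}$ (otherwise $I_\ee=0$, the statement is vacuous), pick any nonzero $e\in\ee$ and set $\ff=Ue\in\calh$. Since $\nul{L}=\ee$ by \eqref{leftinverse}, one has $L^n e=0$ for $n\Ge 1$, hence $\hat\ff(n)=P_\ee L^n e=\delta_{n,0}e$. Substituting into Lemma \ref{multipod}(iii) gives
\[
(\mphi Ue)(z)=\sum_{n=0}^\infty a_n e\,z^n,\qquad z\in\Dbb_r.
\]
As $\mphi Ue\in\calh\subset\aa(\ee)$, the left-hand side is an $\ee$-valued analytic function on $\Dbb_r$, so the right-hand side converges in $\ee$ for every $z\in\Dbb_r$. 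Since $e\neq 0$, the scalar series $\sum_n a_n z^n$ converges on $\Dbb_r$, proving \eqref{phiconv}.

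For \eqref{wzormnoz}, fix $z\in\Dbb_r$. Both series $\sum_n a_n z^n$ and $\ff(z)=\sum_n\hat\ff(n)z^n$ lie strictly inside their discs of convergence, hence converge absolutely. Applying Mertens's (or Cauchy's) theorem on the product of absolutely convergent series to the scalar factor $a_k$ and the $\ee$-valued factor $\hat\ff(n-k)$, we conclude
\[
\Big(\sum_{n=0}^\infty a_n z^n\Big)\ff(z)=\sum_{n=0}^\infty\Big(\sum_{k=0}^n a_k\hat\ff(n-k)\Big)z^n=(\mphi\ff)(z),
\]
the last equality being the specialization of Lemma \ref{multipod}(iii) noted above. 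This completes the proof. There is no serious obstacle; the only small point to watch is the use of $\nul{L}=\ee$ to identify $\widehat{Ue}$ with $\delta_{n,0}e$, which isolates the coefficients $a_n$ cleanly.
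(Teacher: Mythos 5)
Your proposal is correct and follows essentially the same route as the paper: both establish \eqref{phiconv} by evaluating $\mphi$ on $Ue$ for $e\in\ee\setminus\{0\}$ (using $\widehat{Ue}(n)=\delta_{n,0}e$), and both obtain \eqref{wzormnoz} by recognizing the expansion from Lemma \ref{multipod}(iii) as the Cauchy product of $\sum_n a_nz^n$ with $\sum_n\hat\ff(n)z^n$. The only cosmetic difference is that you argue directly with the $\ee$-valued series via Mertens' theorem, while the paper pairs everything against a fixed $e\in\ee$ and works with scalar series.
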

\begin{proof}
Let $z\in\Dbb_r$ and let $e \in \ee$ be such that $\|e\|_{\hh}=1$. Then $(Ue)(z)=e$, since $e\in\ee$. As a consequence we get
 \begin{align*}
 \big\langle (\mphi Ue)(z),e\big\rangle_\ee=\Big\langle\sum_{n=0}^\infty a_nez^n,e\Big\rangle_\ee
 =\sum_{n=0}^\infty a_n\big\langle z^ne,e\big\rangle_\ee=\sum_{n=0}^\infty a_nz^n.
 \end{align*}
 In particular $\sum_{n=0}^\infty a_nz^n \text{ is convergent for every } z\in\Dbb_r$.
 
 Now, let $\ff\in \calh$. It is easily seen that $\is{\ff(z)}{e}=\sum_{n=0}^\infty\is{\hat\ff(n)}{e}z^n$ and hence the series $\sum_{n=0}^\infty\is{\hat\ff(n)}{e}z^n$ is convergent for every $z\in\Dbb_r$.  By this and \eqref{phiconv} we obtain
 \begin{align*}
     \bis{(\mphi \ff)(z)}{e}  &=\sum_{n=0}^\infty \sum_{k=0}^n a_k\is{ \hat{\ff}(n-k)}{e} z^n   
      \\ &= \Big( \sum_{n=0}^\infty a_n z^n \Big) \cdot \Big(\sum_{n=0}^\infty \is{ \hat{\ff}(n)}{e} z^n \Big)=\Big( \sum_{n=0}^\infty a_n z^n \Big)\is{\ff(z)}{e}.
 \end{align*}
 Thus \eqref{wzormnoz} holds, since $e\in\ee$ is arbitrary.
\end{proof}

\section{Weighted shifts on directed trees}
One of the most important examples of analytic operators are bounded weighted shift operators on directed trees with root (see \cite[Lemma 3.3]{c-t}). This class is a subclass of weighted composition operators (see \cite{b-j-j-s-ampa}). Our aim in this section is to show that generalized multipliers with scalar coefficients (defined in the previous section) and multipliers (defined in \cite{b-d-p-2015}) coincide in the case of left-invertible bounded weighted shift $\slam$ on rooted directed tree. In order to do this, we will prove that operators unitarily equivalent to polynomials of $\slam$ are SOT dense in the space $\{\mphi\colon \hphi\in\multiskal(\slam)\}$.
     
   Let us define the $k$-th generation of vertices as the set $V_k:= \{ v \in V \colon |v|=k\}$, for some $k \in \N_0$. Functions acting on $k$-th generation of vertices forms the set $$\ell^2(V_k)=\big\{f\in\ell^2(V) \colon f(u)=0 \text{ if } |u|\neq k\big\}, \quad k \in \N_0.$$ By $P_k$ we denote the orthogonal projection from the space $\ell^2(V)$ onto $\ell^2(V_k)$.
   Let us state a simple, but very useful lemma, which enables us to consider special type of $\ell^2(V)$ bases.
   \begin{lem} \label{separated}
    Assume \eqref{stand2}. Let $\slam\in\bsb(\ell^2(V))$ be left-invertible.  Then there exists an orthonormal basis $\{e'_j\}_{j\in J}$ of $\nul{\slam^*}$ such that for every $j \in J$ vector $e'_j$ belongs to the space $\ell^2(V_{k_j})$ for some $k_j \in \N_0$.
\end{lem}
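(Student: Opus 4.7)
The plan is to exploit the generational decomposition of $\nul{\slam^*}$ that comes directly from the definition of $\slam^*$ on the standard basis. First I compute the adjoint: for $v \in V^\circ$ one has $\slam^* e_v = \lambda_v e_{\pa{v}}$, while $\slam^* e_{\koo} = 0$. Consequently, $f \in \nul{\slam^*}$ if and only if
\begin{align*}
\sum_{v \in \dzi{u}} \lambda_v f(v) = 0 \quad \text{for every } u \in V,
\end{align*}
and the coordinate $f(\koo)$ is completely unconstrained.

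For every $u \in V$ I set $\calw_u := \{g \in \ell^2(\dzi{u}) \colon \sum_{v \in \dzi{u}} \lambda_v g(v) = 0\}$, a closed subspace of $\ell^2(\dzi{u})$. Since $V$ is partitioned as $\{\koo\} \sqcup \bigsqcup_{u \in V} \dzi{u}$, the subspaces $\C e_{\koo}$ and $\{\ell^2(\dzi{u})\}_{u \in V}$ are pairwise orthogonal and their $\ell^2$-direct sum equals $\ell^2(V)$. Intersecting with $\nul{\slam^*}$ and using the kernel characterization above yields the orthogonal decomposition
\begin{align*}
\nul{\slam^*} = \C e_{\koo} \oplus \bigoplus_{u \in V} \calw_u.
\end{align*}
Since $\dzi{u} \subseteq V_{|u|+1}$, each $\calw_u$ sits inside $\ell^2(V_{|u|+1})$, while $e_{\koo}\in\ell^2(V_0)$.

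To finish, for each $u \in V$ I pick an orthonormal basis of $\calw_u$ (its vectors automatically lie in $\ell^2(V_{|u|+1})$) and take the union of all these bases with $\{e_{\koo}\}$. Orthogonality across different indices is automatic, since the supports lie in the pairwise disjoint sets $\{\koo\}$ and $\dzi{u}$, $u \in V$. The resulting family is the required basis $\{e'_j\}_{j \in J}$. I do not anticipate any serious obstacle; the only point requiring care is that the displayed decomposition is meant as an orthogonal $\ell^2$-sum rather than an algebraic one, and this is guaranteed by the analogous decomposition of $\ell^2(V)$ together with closedness of $\nul{\slam^*}$. Let me also note that neither leaflessness nor left-invertibility actually intervenes in this argument; they are inherited from the standing assumption \eqref{stand2}.
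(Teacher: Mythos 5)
Your argument is correct and is essentially the paper's: the paper simply invokes the orthogonal decomposition $\nul{\slam^*}=\C e_{\koo}\oplus\bigoplus_{u\in V}\big(\ell^2(\dzi{u})\ominus\C\,\slam e_u\big)$ by citation, and your $\calw_u$ is exactly $\ell^2(\dzi{u})\ominus\C\,\slam e_u$, so you have merely written out the derivation that the paper outsources. Your side remarks are also accurate: neither leaflessness nor left-invertibility is used, only boundedness of $\slam$ (which guarantees $\{\lambda_v\}_{v\in\dzi{u}}\in\ell^2(\dzi{u})$ and hence that each $\calw_u$ is a closed hyperplane).
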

\begin{proof}
It follows from the orthogonal decomposition of $\nul{\slam^*}$ (cf. \cite[equation (23)]{b-d-p-p-2017}).
\end{proof}
Any orthonormal basis satisfying condition from Lemma \ref{separated} will be called {\em separated}.

In \cite{b-d-p-p-2017}, we defined the function $f_w\colon V\to\C$ for $w \in \T$ and $f\colon V\to\C$ (the rotation of $f$ by the angle $w$) by the formula \begin{align*}
f_w(u) = w^{|u|} f(u),\quad u\in V.
\end{align*} This enabled us to show that polynomials of the operator are SOT-closed in the space of its multipliers. Before we state the definition of rotation in $\calh$, we need to define special diagonal operators.
 \begin{dfn}
 Assume \eqref{stand2}. Let $\slam\in\bsb(\ell^2(V))$ be left-invertible and let $\{e'_j\}_{j\in J}$ be a separarted basis of $\nul{\slam^*}$ such that $e'_j\in\ell^2(V_{k_j})$ for every $j\in J$, where $k_j\in\N_0$. For $ w \in \T$, we denote by $D_w\in\bsb\big(\nul{\slam^*}\big)$ a diagonal operator given by $$D_we'_j=w^{k_j}e'_j \text{ for every } j\in J.$$
 \end{dfn}
 Now, we are able to define the analogue of the rotation of function in $\calh$.
   \begin{dfn}\label{fw} Assume \eqref{stand2}. Let $\slam\in\bsb(\ell^2(V))$ be left-invertible and let $w \in \T$. 
If $\ff \in \calh$, then by $\ff_w$ we denote the analytic function $U(U^*\ff)_w\in\calh$. Let $\hphi\colon \N_0\to\bsb\big(\nul{\slam^*}\big)$. By $\hphi_w$ we denote the sequence $\{w^nD_w\hphi(n)D_{\overline{w}}\}_{n=0}^\infty$.
\end{dfn}

\begin{rem}
For $\ff \in \calh$ and $w \in \T$ one may define the function $\ff_w$ by the formula $\sum_{n=0}^\infty \hat{\ff}(n) w^n z^n$ (see \cite[Equality (50)]{shi}), which is an analytic function in $\aa\big(\nul{\slam^*}\big)$. However, in general such function need not to be in $\calh$ (the counterexample may be build for the weighted shift from Example \ref{1alfa}).
\end{rem}
The following proposition is an analogue of \cite[Lemma 3.1]{b-d-p-p-2017} for generalized multipliers.
\begin{prop}\label{cont}
Assume \eqref{stand2}. Let $\slam\in\bsb(\ell^2(V))$ be left-invertible and let $\{e'_j\}_{j\in J}$ be a separated basis of $\nul{\slam^*}$ such that $e'_j\in\ell^2(V_{k_j})$ for every $j\in J$, where $k_j\in\N_0$. Then the following conditions hold:
\begin{enumerate}
\item[(i)] $\widehat{\ff_w}(n)=w^nD_w\big(\hat{\ff}(n)\big)$, for every $\ff\in\calh$, $w\in\T$, and $n\in\N_0$.
\item[(ii)] For every $ \ff \in \calh$ and $w\in \T$, the vector $\ff_w$ belongs to $\calh$ and $\|\ff \| = \| \ff_w\|$.
\item[(iii)] For every $\ff \in \calh$, the mapping $\T \ni w \mapsto \ff_w \in \calh$ is continuous.
\item[(iv)] For every $\hphi \in \calm(\slam)$ and every $w\in\T$, the sequence $\hphi_w\in\calm(\slam)$ and 
$$ \mphiw \ff = (\mphi \ff_{\overline{w}})_w, \quad \ff \in \calh.$$ 
\item[(v)] For every $\hphi \in \calm(\slam)$, the mapping $\T \ni w \mapsto M_{\hphiw} \in \bsb(\calh)$ is SOT-continuous.
\end{enumerate}
\end{prop}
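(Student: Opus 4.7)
The plan is to prove (ii) and (iii) first (these reduce to basic facts about the rotation map on $\ell^2(V)$), then prove (i) as the central calculation using the separated structure of the chosen basis, and finally deduce (iv) and (v) from (i) combined with (ii) and (iii). For (ii), the definition $\ff_w = U(U^*\ff)_w$ and the fact that $U\colon\hh\to\calh$ is an isometry reduce the claim to showing that $f\mapsto f_w$ is isometric on $\ell^2(V)$, which is immediate from $|w^{|u|}|=1$. For (iii), I would take $f=U^*\ff$, expand $\|f_w - f_{w_0}\|^2 = \sum_{u\in V}|w^{|u|}-w_0^{|u|}|^2|f(u)|^2$, dominate each summand by $4|f(u)|^2$, and pass to the limit by dominated convergence.

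For (i), which is the heart of the proposition, I would prove two auxiliary identities in $\ell^2(V)$: (a) $Lf_w = w(Lf)_w$ for every $f\in\ell^2(V)$, and (b) $P_\ee h_w = D_w P_\ee h$ for every $h\in\ell^2(V)$. Identity (a) follows from the explicit action of $L$ on the canonical basis, where $Le_v$ is a scalar multiple of $e_{\pa{v}}$ for $v\in V^\circ$ and $Le_{\koo}=0$; the rotation inserts the factor $w^{|v|}=w\cdot w^{|\pa{v}|}$, producing exactly one extra power of $w$ beyond what $(Lf)_w$ carries. Identity (b) is where the separated basis is crucial: since each $e'_j$ is supported on a single generation $V_{k_j}$, one has $\langle h_w,e'_j\rangle = w^{k_j}\langle h,e'_j\rangle$, hence $P_\ee h_w = \sum_j w^{k_j}\langle h,e'_j\rangle e'_j = D_w P_\ee h$. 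Iterating (a) gives $L^n f_w = w^n(L^n f)_w$, and applying (b) to $h=L^n U^*\ff$ then yields $\widehat{\ff_w}(n) = P_\ee L^n(U^*\ff)_w = w^n D_w P_\ee L^n U^*\ff = w^n D_w\hat\ff(n)$, which is (i).

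For (iv), using Lemma \ref{multipod}(ii) and two applications of (i), I would compute
\[
\widehat{(\mphi\ff_{\bar w})_w}(n) = w^n D_w \sum_{k=0}^n \hphi(k)\widehat{\ff_{\bar w}}(n-k) = \sum_{k=0}^n w^k D_w\hphi(k)D_{\bar w}\hat\ff(n-k) = (\hphi_w*\hat\ff)(n).
\]
Since $\ff_{\bar w}\in\calh$ by (ii), and therefore $\mphi\ff_{\bar w}\in\calh$ by $\hphi\in\calm(\slam)$, and finally $(\mphi\ff_{\bar w})_w\in\calh$ by (ii) again, the computation above means that $\hphi_w*\hat\ff$ is the coefficient sequence of a function in $\calh$ for every $\ff\in\calh$, so $\hphi_w\in\calm(\slam)$ with $M_{\hphi_w}\ff = (\mphi\ff_{\bar w})_w$. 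For (v), inserting this identity into the triangle inequality gives
\[
\|M_{\hphi_w}\ff - M_{\hphi_{w_0}}\ff\| \leq \|(\mphi(\ff_{\bar w}-\ff_{\bar w_0}))_w\| + \|(\mphi\ff_{\bar w_0})_w - (\mphi\ff_{\bar w_0})_{w_0}\|;
\]
the first summand equals $\|\mphi(\ff_{\bar w}-\ff_{\bar w_0})\|$ by (ii) and vanishes by (iii) and boundedness of $\mphi$, and the second vanishes by (iii) applied to $\mphi\ff_{\bar w_0}$. I expect the main obstacle to be the careful bookkeeping in (i): isolating precisely where the separated structure of the basis is essential. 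Intuitively, the diagonal operator $D_w$ is exactly the device that converts the level-dependent scalar $w^{|u|}$ governing the rotation on $\ell^2(V)$ into an operator on $\ee$ compatible with the decomposition of $\nul{\slam^*}$ into generation-supported pieces.
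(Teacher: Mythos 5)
Your proposal is correct and follows essentially the same route as the paper: parts (ii)--(iii) reduce to the isometry and continuity of the rotation on $\ell^2(V)$ combined with the unitarity of $U$, part (iv) is the same coefficient computation with $(\hphi_w*\hat\ff)(n)=w^nD_w\big((\hphi*\widehat{\ff_{\overline{w}}})(n)\big)$, and part (v) is the same triangle-inequality argument. The only cosmetic difference is in (i), where you establish the forward intertwinings $Lf_w=w(Lf)_w$ and $P_{\ee}h_w=D_wP_{\ee}h$, while the paper obtains the identical identity by pairing $f_w$ against $L^{*n}e'_j\in\ell^2(V_{k_j+n})$; both rest on the same two facts, that $L$ lowers generations by one and that the basis is separated.
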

\begin{proof}
(i)  Let $f:=U^*\ff$. Then by Definition \ref{fw} and the facts that $L^{*n} e'_j\in\ell^2(V_{k_j+n})$ and $e'_j \in \nul{\slam^*}$ for every $n \in \N_0$, $j \in J$ we have
\begin{align*}
\bis{\widehat{\ff_w}(n)}{e'_j}&=\bis{P_{\nul{\slam^*}} L^n U^*(\ff_w)}{e'_j} = \langle  L^n f_w, e'_j \rangle = \langle f_w, L^{*n}e'_j \rangle \\ &=
\langle   f_w, P_{n+k_j}L^{*n} e'_j \rangle=\langle P_{n+k_j}  f_w, L^{*n} e'_j \rangle = \langle w^{n+k_j} P_{n+k_j}f, L^{*n} e'_j \rangle \\ &= 
 w^{n+k_j} \langle P_{\nul{\slam^*}}  L^{n}f,  e'_j \rangle = w^{n+k_j} \langle \hat{\ff}(n),  e'_j \rangle.
\end{align*}
As a consequence we obtain the equality
$$ \widehat{\ff_w}(n) = \sum_{j \in J} \langle \widehat{\ff_w}(n), e'_j \rangle e'_j=  \sum_{j \in J} w^{n+k_j}\langle \hat{\ff}(n), e'_j \rangle e'_j= w^n D_w \big(\hat{\ff}(n)\big).$$

(ii) and (iii) follow from \cite[Lemma 3.1]{b-d-p-p-2017}, since $U$ is unitary operator.

(iv) Let $\hphi \in \calm(\slam)$ and $w\in\T$. Then by \eqref{multi0}, Definition \ref{fw}, and (i)
\begin{align*}
 (\hphiw *\hat{\ff}) (n) &= \sum_{k=0}^n \hphiw(k) \hat{\ff}(n-k) = \sum_{k=0}^n w^k D_w \hphi(k)D_{\overline{w}} \hat{\ff}(n-k) \\
 &= w^n D_w \Big(\sum_{k=0}^n \hphi(k) \overline{w}^{n-k} D_{\overline{w}} \hat{\ff}(n-k)\Big) = w^n D_w \Big(\sum_{k=0}^n \hphi(k) \widehat{\ff_{\overline{w}}}(n-k)\Big)\\
 &= w^n D_w \big((\hphi *\widehat{\ff_{\overline{w}}}) (n)\big) = w^n D_w \big((\mphi \ff_{\overline{w}}) \widehat{\phantom{A}} (n)\big) \\ &=\big((\mphi\ff_{\overline{w}})_w\big)\widehat{\phantom{A}}(n), \quad \ff \in \calh, \, n \in \N_0.
\end{align*}
Thus $\hphiw\in\calm(\slam)$ and $ \mphiw \ff = (\mphi \ff_{\overline{w}})_w$.

(v) Let $\varepsilon>0$, $w\in\T$, and $\ff\in\calh$. Then by (iv), (iii), and (ii)
\begin{align*}
    \|\mphiw\ff-M_{\hphi_{w'}}\ff\|&=\|(\mphi \ff_{\overline{w}})_w-(\mphi \ff_{\overline{w'}})_{w'}\|\\
    & \leqslant \|(\mphi \ff_{\overline{w}})_w-(\mphi \ff_{\overline{w}})_{w'}\|+\|(\mphi \ff_{\overline{w}})_{w'}-(\mphi \ff_{\overline{w'}})_{w'}\|\\
    & <\varepsilon+\|\mphi \ff_{\overline{w}}-\mphi \ff_{\overline{w'}}\|\\
    & \leqslant \varepsilon+\|\mphi\|\cdot \| \ff_{\overline{w}}-\ff_{\overline{w'}}\|<2\varepsilon
\end{align*}
for $w'\in\T$ sufficiently close to $w$. 
\end{proof}

Proposition \ref{cont} enables us to define the integral 
$$ \int_{\T} q(w) \mphiw \D w\in \bsb(\calh),$$
for any continuous function $q \colon \T \to \C$. This can be done in a similar manner as in \cite[Section 3.1.2]{nik2} or \cite[Section 3]{b-d-p-p-2017}.

Let $\trig$ denotes the set of \textit{trigonometric polynomials} on $\T$. Given  $p\in\trig$ of degree $n\in\N_0$, i.e., $p(z)=\sum_{k=-n}^n p_k z^k$ with $\{p_k\}_{k=-n}^n\subseteq \C$, we define $\hat p \colon \N_0 \to \C$ to be the mapping such that \begin{align*}
\hat p(k) = \left\{ \begin{array}{cl} 
p_k & \text{if } k \leq n, \\
0 &  \text{if } k>n.
\end{array}
\right.
\end{align*}

In the next two propositions, we will prove that operators unitarily equivalent to polynomials of $\slam$ are dense in the space $\{\mphi\colon \hphi\in\multiskal(\slam)\}$ with respect to the SOT topology. In general, if $\hphi \in \calm(\slam)$ then the sequence $\hat p\, \hphi $ may not be a generalized multiplier. Hence, the same argumentation fails for $\calm(\slam)$ space.
\begin{prop} \label{calka}
Assume \eqref{stand2}. Let $\slam\in\bsb(\ell^2(V))$ be left-invertible. If $\hphi \in\multiskal(\slam)$ and $p\in\trig$, then $\hat{p}\, \hphi\in\multiskal(\slam)$ and

\begin{equation*}
  \int_{\T} p(\overline{w}) \mphiw \D w=M_{\hat{p} \, \hphi}.  
\end{equation*}
\end{prop}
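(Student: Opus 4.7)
The plan is to split into the two assertions, observing that $\hat{p}\,\hphi$ has finite support. Writing $\hphi(n) = a_n \I_\ee$ for the scalars $\{a_n\}_{n=0}^\infty\subset\C$ associated with $\hphi \in \multiskal(\slam)$, we see $(\hat{p}\,\hphi)(n) = \hat{p}(n) a_n \I_\ee$ vanishes for $n > N:=\deg p$, hence $\hat{p}\,\hphi = \sum_{k=0}^{N} \hat{p}(k)\, a_k\, \chi_{\{k\}} \I_\ee$. Each $\chi_{\{k\}}\I_\ee$ lies in $\multiskal(\slam)$ with $M_{\chi_{\{k\}}\I_\ee} = \unitT^k$ by Theorem \ref{cauchym}(i), and the transformation $\hphi\mapsto \mphi$ is linear on $\calm(\slam)$. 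Therefore $\hat{p}\,\hphi \in \multiskal(\slam)$ with $M_{\hat{p}\,\hphi} = \sum_{k=0}^{N}\hat{p}(k)a_k\unitT^k$.

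For the integral identity, set $A := \int_{\T} p(\overline{w}) \mphiw \D w$. Proposition \ref{cont}(v) guarantees that $w\mapsto \mphiw$ is SOT-continuous, so $A$ is a well-defined element of $\bsb(\calh)$ with $A\ff = \int_\T p(\overline{w}) \mphiw \ff \D w$ a Bochner integral in $\calh$ for each $\ff \in \calh$. I would verify $A = M_{\hat{p}\,\hphi}$ by comparing Taylor coefficients at the origin. Since the coefficient functional $\gg\mapsto \hat\gg(n) = P_\ee L^n U^*\gg$ is bounded linear on $\calh$, it commutes with the Bochner integral, giving $\widehat{A\ff}(n) = \int_\T p(\overline{w})\,\widehat{\mphiw \ff}(n)\,\D w$. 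The crucial ingredient is the identity $\hphi_w(k) = w^k a_k \I_\ee$, which follows from Definition \ref{fw} because the scalar operator $a_k\I_\ee$ commutes with the diagonal $D_w$, leaving $D_w D_{\overline{w}} = \I_\ee$. Plugging into \eqref{multi0} yields $\widehat{\mphiw \ff}(n) = \sum_{k=0}^n w^k a_k \hat{\ff}(n-k)$, a finite sum. Swapping sum and integral and using the Fourier orthogonality $\int_\T p(\overline{w}) w^k \D w = \hat{p}(k)$ produces $\widehat{A\ff}(n) = \sum_{k=0}^n \hat{p}(k) a_k \hat{\ff}(n-k) = \big((\hat{p}\,\hphi)*\hat{\ff}\big)(n) = \widehat{M_{\hat{p}\,\hphi}\ff}(n)$, and the uniqueness of power series coefficients finishes the proof.

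The single delicate point is justifying the interchange of the coefficient functional with the Bochner integral; this rests on $P_\ee L^n U^* \colon \calh \to \ee$ being bounded linear and is the main technical hinge. Everything else is elementary Fourier computation on $\T$, enabled specifically by the scalar form $\hphi_w(k) = w^k a_k \I_\ee$. That identity would fail for a generic $\hphi \in \calm(\slam)$, consistent with the remark preceding the proposition that the analogous argument does not extend to the full algebra $\calm(\slam)$ since pointwise products by $\hat{p}$ need not preserve generalized multipliers.
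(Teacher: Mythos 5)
Your proof is correct, and its second half takes a genuinely different (and arguably cleaner) route than the paper's. The paper first reduces to the monomials $p(w)=w^k$, $k\in\Z$, and then verifies the integral identity at the level of point evaluations: it computes $\bigl\langle\bigl(\int_{\T}\overline{w}^k\mphiw\D w\,\ff\bigr)(z),e\bigr\rangle$ for $z\in\Dbb_r$, invoking Lemma \ref{mnoz} to write $(\mphiw\ff)(z)=\bigl(\sum_n w^n a_n z^n\bigr)\ff(z)$ and the Lebesgue dominated convergence theorem to interchange the resulting \emph{infinite} series with the integral over $\T$. You instead compare Taylor coefficients: since $\gg\mapsto\hat\gg(n)=P_\ee L^nU^*\gg$ is a bounded operator $\calh\to\ee$, it passes through the Bochner integral, and $\widehat{\mphiw\ff}(n)=(\hphi_w*\hat\ff)(n)=\sum_{k=0}^n w^k a_k\hat\ff(n-k)$ is a \emph{finite} sum (here you use $\hphi_w(k)=w^kD_w(a_k\I_\ee)D_{\overline w}=w^ka_k\I_\ee$, together with the fact that $\hphi_w\in\calm(\slam)$ by Proposition \ref{cont}(iv) so that Lemma \ref{multipod}(ii) applies), so the only interchange needed is a finite sum with an integral, followed by $\int_\T p(\overline w)w^k\D w=\hat p(k)$. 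This avoids Lemma \ref{mnoz} and the dominated-convergence step entirely, at the cost of having to justify that coefficient extraction commutes with the Bochner integral, which, as you note, is standard. Your explicit decomposition $\hat p\,\hphi=\sum_{k=0}^N\hat p(k)a_k\chi_{\{k\}}\I_\ee$ with $M_{\hat p\,\hphi}=\sum_{k=0}^N\hat p(k)a_k\unitT^n|_{n=k}$ also makes the first assertion more transparent than the paper's one-line appeal to Theorem \ref{cauchym}(i), and your closing remark correctly identifies why the argument is confined to $\multiskal(\slam)$.
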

\begin{proof} Let $\hphi \in \multiskal(\slam)$. To prove the claim it is sufficient to consider the function $p(w)=w^k$ for $k\in\Z$. By Theorem \ref{cauchym}(i) the sequence $\hat{p}\,\hphi$ belongs to $\multiskal(\slam)$.  
Let $\ff \in \calh$, $z \in \Dbb_r$ and $e \in \nul{\slam^*}$. Applying Lemma \ref{mnoz}, Definition \ref{fw}, and the Lebesgue dominated convergence theorem we get the equalities
\begin{align*}
\Bis{\Big( \int_{\T} \overline{w}^k \mphiw \D w \ff \Big)(z)}{e} &= \int_{\T} \Bis{\Big(  \overline{w}^k \mphiw \ff (z)}{e} \D w = \int_{\T} \overline{w}^k  \sum_{n=0}^\infty \hphiw(n) z^n \is{f(z)}{e} \D w   \\
&= \int_{\T} \overline{w}^k  \sum_{n=0}^\infty w^n \hphi(n) z^n \is{f(z)}{e} \D w \\
&= \is{f(z)}{e} \int_{\T}  \sum_{n=0}^\infty w^{n-k}  \hphi(n) z^n  \D w \\
&= \is{f(z)}{e}  \sum_{n=0}^\infty  \int_{\T} w^{n-k}  \hphi(n) z^n  \D w \\
&= \left\{ \begin{array}{cl} 
            \is{f(z)}{e} \hphi(k) z^k & \text{ if } k \geqslant 0 \\
            0 &                         \text{ if } k<0
            \end{array} \right. \\
&= \is{(M_{\hat p \, \hat\varphi} \ff)(z)}{e},
\end{align*}
which completes the proof.
\end{proof}
\begin{prop}\label{cesaro}
Assume \eqref{stand2}. Let $\slam\in\bsb(\ell^2(V))$ be left-invertible, $\hphi \in \multiskal(\slam)$ and let 
$$ p_n(w) = \sum_{k=-n}^n \big(1 - \frac{|k|}{n+1}\big) w^k, \quad  n \in \N.$$
Then 
\begin{enumerate}
\item[(i)] $\hat{p}_n \hphi \in \multiskal(\slam)$,
\item[(ii)] for every $n \in \N$, $\|M_{\hat{p}_n  \hphi}\| \leqslant \|\mphi\|$,
\item[(iii)] $M_{\hat{p}_n  \hphi} \to \mphi $ in the strong operator topology.
\end{enumerate}
\end{prop}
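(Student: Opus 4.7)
The function $p_n$ is the classical Fejér kernel, so the proof should be a direct Banach-algebra/approximate-identity argument built on the integral representation established in Proposition~\ref{calka}.

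Part (i) is essentially immediate: since $p_n \in \trig$, Proposition~\ref{calka} applied to $p_n$ yields $\hat{p}_n\hphi \in \multiskal(\slam)$ together with the formula
\begin{equation*}
M_{\hat{p}_n\hphi} = \int_\T p_n(\bar w)\, M_{\hphi_w}\,\D w.
\end{equation*}

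For part (ii), I would first establish that rotation preserves the multiplier norm: for each $w\in\T$, Proposition~\ref{cont}(iv) gives $M_{\hphi_w}\ff = (M_\hphi \ff_{\bar w})_w$, and then the isometric nature of rotation from Proposition~\ref{cont}(ii) gives $\|M_{\hphi_w}\ff\| = \|M_\hphi \ff_{\bar w}\| \Le \|M_\hphi\|\,\|\ff\|$, hence $\|M_{\hphi_w}\| \Le \|M_\hphi\|$. Then the two key features of the Fejér kernel come in: $p_n \Ge 0$ on $\T$, and the constant term of $p_n$ (at $k=0$) equals $1$, so with the normalized Haar measure on $\T$ we have $\int_\T p_n(\bar w)\,\D w = 1$. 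Combining with the integral formula gives the triangle-inequality estimate
\begin{equation*}
\|M_{\hat{p}_n\hphi}\ff\| \Le \int_\T p_n(\bar w)\,\|M_{\hphi_w}\ff\|\,\D w \Le \|M_\hphi\|\,\|\ff\|,
\end{equation*}
which is precisely the asserted bound.

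Part (iii) is the heart of the proposition, and this is where I expect the only real subtlety. The idea is to use the Fejér kernel as an approximate identity concentrated at $w=1$, combined with the SOT-continuity of the map $w\mapsto M_{\hphi_w}$ given by Proposition~\ref{cont}(v). Note that $D_1 = I$, so $\hphi_1 = \hphi$ and hence $M_{\hphi_1}\ff = M_\hphi \ff$. Fix $\ff\in\calh$; the function $F\colon\T\to\calh$, $F(w) := M_{\hphi_w}\ff$, is continuous. The standard Fejér-kernel approximation theorem (the vector-valued version follows at once from the scalar one applied to $\langle F(\cdot),g\rangle$ together with the uniform bound $\|F(w)\| \Le \|M_\hphi\|\,\|\ff\|$ from (ii) and a standard $\varepsilon$-$\delta$ argument using continuity of $F$ at $w=1$ and $\int_\T p_n(\bar w)\,\D w = 1$) then yields
\begin{equation*}
M_{\hat{p}_n\hphi}\ff = \int_\T p_n(\bar w)F(w)\,\D w \;\longrightarrow\; F(1) = M_\hphi\ff
\end{equation*}
in the norm of $\calh$, which is exactly SOT convergence $M_{\hat{p}_n\hphi}\to M_\hphi$. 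The only obstacle worth flagging is ensuring the approximate-identity argument applies cleanly in the vector-valued setting, but since $F$ is continuous into a Hilbert space and the integral is the Bochner integral already used in Proposition~\ref{calka}, this reduces to the classical scalar Fejér theorem applied to inner products.
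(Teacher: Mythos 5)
Your proof is correct, and parts (i) and (ii) follow essentially the paper's route: (i) is the observation that $\hat p_n\hphi$ is a finite combination of shifted multipliers (the paper cites Theorem~\ref{cauchym}, you cite Proposition~\ref{calka}; both work), and (ii) is exactly the positivity-plus-normalization argument for the Fej\'er kernel combined with $\|M_{\hphi_w}\|\Le\|\mphi\|$ that the paper imports from \cite[Lemma 3.3(ii)]{b-d-p-p-2017}. Where you genuinely diverge is part (iii). You treat $w\mapsto M_{\hphi_w}\ff$ as a continuous $\calh$-valued function on $\T$ (Proposition~\ref{cont}(v)), note $\hphi_1=\hphi$ since $D_1=I$, and invoke the classical approximate-identity property of the Fej\'er kernel for the Bochner integral to get norm convergence $M_{\hat p_n\hphi}\ff\to\mphi\ff$ for \emph{every} $\ff\in\calh$ at once. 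The paper instead first reduces, via the norm bound (ii) and the wandering-subspace decomposition $\ell^2(V)=\bigvee_{n\Ge0}\slam^n\nul{\slam^*}$, to the generating vectors $Ue'_j$ of a separated basis, and then computes explicitly: writing $\mphi Ue'_j=Ug$, it shows $M_{\hat p_n\hphi}Ue'_j=Ug_n$ with $g_n(u)=\hat p_n(||u|-k_j|)g(u)$, and concludes by dominated convergence in $\ell^2(V)$. Your argument is shorter and more conceptual, needing only the abstract continuity statement rather than the tree structure; the paper's argument avoids invoking the vector-valued Fej\'er theorem and yields the concrete formula for $M_{\hat p_n\hphi}$ on the generators, at the cost of the density reduction. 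Both are complete; just make sure, as you flag yourself, that the operator-valued integral of Proposition~\ref{calka} is interpreted consistently with the pointwise Bochner integral $\int_\T p_n(\overline w)M_{\hphi_w}\ff\,\D w$, which is exactly how the paper defines it following \cite[Section 3.1.2]{nik2}.
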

\begin{proof}
(i) It follows from Theorem \ref{cauchym}.

(ii) See the proof of \cite[Lemma 3.3(ii)]{b-d-p-p-2017} and use Propositions \ref{calka} and \ref{cont}.

(iii) Let $\unitT=U\slam U^*$. Applying equality $\ell^2(V) = \bigvee_{n=0}^\infty \slam^n \nul{\slam^*}$ (see \cite[Lemma 6.4]{b-d-p-p-2017}) we obtain that $\calh = \bigvee_{n=0}^\infty \unitT^n U \nul{\slam^*}$, where $\bigvee_{n=0}^\infty X_n$ stands for the smallest closed linear subspace of $\calh$ such that $X_k \subset \bigvee_{n=0}^\infty X_n$ for every $k \in \N_0$. Now, by 
(ii) and the fact that $\unitT$ commutes with $\mpsi$ for every $\hpsi\in\multiskal(\slam)$, it is sufficient to show that
$$ M_{\hat{p}_n \hphi} U e \to \mphi U e, \text{ for every } e \in \nul{\slam^*}.$$  
Let $\{e'_j\}_{j \in J}$ be a separated basis for $\nul{\slam^*}$ and let $e'_j \in \ell^2(V_{k_j})$ for some $k_j \in \N_0$. We will show that 
$$ M_{\hat{p}_n \hphi} U e'_j \to \mphi U e'_j \text{ for every } j \in J. $$ 
Let $j \in J$. Since $\mphi U e'_j \in \calh$ then there exists $g \in \ell^2(V)$ such that $\mphi U e'_j = Ug$. In particular, applying Lemma \ref{multipod}(i) 
\begin{equation}\label{lmg} \hphi(m) e'_j= \widehat{\mphi Ue'_j}(m) =\widehat{Ug}(m)= P_{\nul{\slam^*}} L^m g .\end{equation}
Now, for every $n \in \N$, let $g_n \colon V \to \C$ be such that
$$ g_n(u)= \hat{p}_n \big(\big||u| - k_j\big|\big) g(u).$$
Then $g_n \in \ell^2(V)$ for every $n \in \N$. Moreover, for every $m\in\N_0$ and $l\in J$ we have by \eqref{lmg} 
\begin{align*}
    \is{P_{\nul{\slam^*}} L^mg_n}{e'_l}&=\is{g_n}{L^{*m}e'_l}=\is{P_{m+k_l}g_n}{L^{*m}e'_l}=\hat{p}_n\big(|m+k_l-k_j|\big)\is{P_{m+k_l}g}{L^{*m}e'_l}\\
    &=\hat{p}_n\big(|m+k_l-k_j|\big)\is{P_{\nul{\slam^*}} L^mg}{e'_l}=\hat{p}_n\big(|m+k_l-k_j|\big)\hphi(m)\is{e'_j}{e'_l}.
\end{align*}
Thus by Lemma \ref{multipod}(i)
\begin{align*} \widehat{U g_n}(m) = P_{\nul{\slam^*}} L^mg_n=\sum_{l\in J}\is{P_{\nul{\slam^*}} L^mg_n}{e'_l}e'_l=\sum_{l\in J}\hat{p}_n\big(|m+k_l-k_j|\big)\hphi(m)\is{e'_j}{e'_l}e'_l\\
=\hat{p}_n(m)\hphi(m)e'_j = (M_{\hat{p}_n \hphi} Ue'_j)\widehat{\phantom{a}}(m), \quad m \in \N_0. \end{align*}
Hence $Ug_n=M_{\hat{p}_n \hphi} Ue'_j$ for every $n\in\N$. Since $g_n(u)\to g(u)$ and $|g_n(u)|\leqslant|g(u)|$ for every $u\in V$ we deduce by Lebesgue dominated convergence theorem that $g_n\to g$ in $\ell^2(V)$. Thus $$M_{\hat{p}_n \hphi} Ue'_j =Ug_n \to Ug = \mphi U e'_j,$$
which completes the proof.
\end{proof}
Now, we are ready to prove that generalized multipliers whose coefficients are multiple of the identity operator and classical multipliers (see \cite{b-d-p-2015,b-d-p-p-2017}) for a weighted shifts on directed trees are in fact the same object.
\begin{prop}\label{skalarne}
Let $\tcal=(V,E)$ be a countably infinite rooted and leafless directed tree, $\lambdab=\{\lambda_v\}_{v \in V^\circ}\subseteq  (0,\infty)$ and let $\slam\in\bsb(\ell^2(V))$ be left-invertible weighted shift on $\tcal$ with weights $\lambdab$. Let $\{a_n\}_{n=0}^\infty \subset \C$. Define $\hphi \colon \N_0 \to \C$, $\hphi(n) = a_n $ for $n \in \N_0$ and let $\hpsi \colon \N_0 \to \bsb\big(\nul{\slam^*}\big)$, $\hpsi(n) = a_n I_{\nul{\slam^*}}$ for $n \in \N_0$. Then, 
$$ \hphi \text{ is a multiplier for } \slam \text{ if and only if } \hpsi \text{ is a generalized multiplier for }\slam. $$
Moreover, if the above conditions are met, then $\mphi^\lambdab = U^* \mpsi U$.
\end{prop}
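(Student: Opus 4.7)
The plan is to transfer between the classical multiplier $\mphi^\lambdab$ on $\ell^2(V)$ and the generalized multiplier $\mpsi$ on $\calh$ by approximating both sides by polynomials and exploiting the unitary equivalence $\unitT = U\slam U^*$. The key observation is that by Theorem \ref{cauchym}(i), for any polynomial $p(z)=\sum_{k=0}^n c_k z^k$ we have
\begin{equation*}
Up(\slam)U^* = p(\unitT) = M_{\hat{p}\cdot \I_\ee},
\end{equation*}
so polynomials in $\slam$ on the classical side correspond to finitely-supported scalar generalized multipliers on the Shimorin side. Cesàro--Fejér truncations provide an SOT-approximating sequence whose coefficients converge pointwise to the coefficients of $\hphi$ (respectively $\hpsi$), and the desired identification follows.

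For the forward direction, assume $\hphi\in\multi$, so that $\mphi^\lambdab\in\bsb(\ell^2(V))$. By \cite[Lemma 3.3]{b-d-p-p-2017} the Cesàro means $p_n(\slam)$, with $p_n(z)=\sum_{k=0}^n(1-\tfrac{k}{n+1})a_k z^k$, satisfy $p_n(\slam)\tosot \mphi^\lambdab$. Conjugating by $U$ yields $M_{\hat{p}_n\cdot \I_\ee}=Up_n(\slam)U^*\tosot U\mphi^\lambdab U^*$. Each $M_{\hat{p}_n\cdot \I_\ee}$ lies in $\multiskal(\slam)\subseteq\calm(\slam)$, so by Corollary \ref{wniosekkom} the SOT-limit is a generalized multiplication operator $M_{\widehat\chi}$ for some $\widehat\chi\in\calm(\slam)$. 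Lemma \ref{kosiarka} gives $\widehat\chi(m)=\mathrm{SOT}\text{-}\lim_n \hat{p}_n(m)\I_\ee = a_m\I_\ee = \hpsi(m)$ for every $m\in\N_0$. Hence $\widehat\chi=\hpsi$, which shows $\hpsi\in\multiskal(\slam)$ and $\mphi^\lambdab=U^*\mpsi U$.

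For the reverse direction, assume $\hpsi\in\multiskal(\slam)$. Proposition \ref{cesaro} yields $M_{\hat{p}_n\hpsi}\tosot \mpsi$ in $\bsb(\calh)$, and by Theorem \ref{cauchym}(i) one has $M_{\hat{p}_n\hpsi}=\sum_{k=0}^n \hat{p}_n(k)a_k \unitT^k$; set $B_n:=\sum_{k=0}^n \hat{p}_n(k)a_k \slam^k\in\bsb(\ell^2(V))$, so that $UB_nU^*=M_{\hat{p}_n\hpsi}$ and consequently $B_n\tosot B:=U^*\mpsi U$. On the classical side, $B_n$ is the classical multiplication operator associated to the finitely-supported multiplier $\hat{p}_n\cdot\hphi$. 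For $u\in V$ and $v\in\des(u)$, formula \eqref{dziobak} gives $(B_n e_u)(v)=\lambda_{u|v}\hat{p}_n(|v|-|u|)a_{|v|-|u|}$; since $B_n e_u\to B e_u$ weakly and $\hat{p}_n(k)\to a_k$ pointwise, we obtain $(Be_u)(v)=\lambda_{u|v}a_{|v|-|u|}=(\varGamma_{\hat\varphi}^\lambdab e_u)(v)$ for every $v\in V$. Thus $\varGamma_{\hat\varphi}^\lambdab e_u = Be_u\in\ell^2(V)$ for every $u\in V$, and boundedness of $B$ together with density of the span of $\{e_u\}_{u\in V}$ in $\ell^2(V)$ forces $\dzD{\mphi^\lambdab}=\ell^2(V)$ and $\mphi^\lambdab=B=U^*\mpsi U$, so $\hphi\in\multi$.

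The main obstacle is the bookkeeping between the two approximation schemes: one must be sure that the Cesàro polynomials $p_n$ based on the scalars $\{a_k\}$ play the same role on both sides simultaneously, and that in each case the pointwise coefficient limit $\hat{p}_n(k)\to a_k$ correctly identifies the symbol of the SOT-limit. Once this matching is carried out, the existing tools (Theorem \ref{cauchym}(i), Corollary \ref{wniosekkom}, Lemma \ref{kosiarka}, Proposition \ref{cesaro}, and \cite[Lemma 3.3]{b-d-p-p-2017}) combine cleanly to yield the equivalence and the unitary identification $\mphi^\lambdab=U^*\mpsi U$.
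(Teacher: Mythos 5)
Your proof is correct and follows essentially the same route as the paper's: Fej\'er/Ces\`aro truncation on both sides, the identification of finitely supported scalar symbols with polynomials in $\slam$ via Theorem \ref{cauchym}(i), SOT-closedness of the generalized multipliers (Corollary \ref{wniosekkom}) together with Lemma \ref{kosiarka} for the forward direction, and Proposition \ref{cesaro} for the reverse. The only cosmetic difference is in the reverse direction, where the paper invokes the SOT-closedness of the classical multiplier algebra from \cite{b-d-p-p-2017} and then reads off the symbol from \eqref{dziobak}, whereas you identify the limit as $M^{\lambdab}_{\hat\varphi}$ directly from \eqref{dziobak} together with the closedness of that operator and the density of $\lin\{e_u\colon u\in V\}$ --- both closings are valid and rest on the same computation.
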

\begin{proof}
Let $\hphi$ be a multiplier for $\slam$. Let, for $n \in \N_0$, denote by $\hat{p}_n \colon \N_0 \to \C$  the coefficients of the $n$-th Fejer kernel, i.e., 
$$\hat{p}_n(m) = \left\{ \begin{array}{lc} 1 - \frac{|m|}{n+1} & \text{ if } m \leq n, \\ 0 & \text{ if } m >n. \end{array} \right.$$
Then by \cite[Theorem 4.4(ii)]{b-d-p-2015} the sequence $\hat{p}_n \hphi$ is a multiplier and by \cite[Corollary 3.4]{b-d-p-p-2017} we know that $M_{\hat{p}_n \hphi}^{\lambdab} \tosot \mphi^\lambdab$. On the other hand $M^\lambdab_{\hat{p}_n \hphi}$ is unitarily equivalent to $M_{\hat{p}_n \hpsi}$, i.e., $M^\lambdab_{\hat{p}_n \hphi} = U^*M_{\hat{p}_n \hpsi} U$. Thus $M_{\hat{p}_n \hpsi} \tosot  UM_{ \hphi}^\lambdab U^*$. Since generalized multipliers are SOT-closed (see Corollary \ref{wniosekkom}) we obtain the equality $ UM_{ \hphi}^\lambdab U^* = M_{\hpsi_0}$ for some $\hpsi_0 \colon \N_0 \to \bsb(\nul{\slam^*})$, $\hpsi_0 \in \calm(\slam)$. From Lemma \ref{kosiarka} for every $m \in \N_0$ we get $\hat{p}_n(m) \hpsi(m) \tosot \hpsi_0(m)$. In particular, $\hpsi(m)=\hpsi_0(m)$, since $\hat{p}_n(m)\to 1$ for every $m\in\N_0$.
 Therefore $\hpsi \in \calm(\slam)$ and $\mphi^\lambdab = U^* \mpsi U$.
 
To prove the reverse implication let us assume that $\hpsi\in\multiskal(\slam)$. Then by Proposition \ref{cesaro} $M_{\hat{p}_n \hpsi }\tosot \mpsi$.
Hence $M_{\hat{p}_n \hphi}^\lambdab=U^*M_{\hat{p}_n \hpsi} U\tosot U^*\mpsi U$. Since multipliers are SOT-closed (see \cite[Proposition 3.5.]{b-d-p-p-2017}) $M_{\hat{p}_n \hphi}^\lambdab\tosot U^*\mpsi U = M_{\hphi_0}^\lambdab$ for some $\hphi_0\in\multi$.
This and \eqref{dziobak} imply that for every $m\in \N_0$ we have $p_n(m)\hphi(m)\to \hphi_0(m)$ when $n\to\infty$. Thus $\hphi=\hphi_0\in\multi$.
\end{proof}
On the basis of the above proposition and \cite[Theorem 3.6.]{b-d-p-p-2017} we get the following corollary.
\begin{cor}\label{seminarium}
Assume \eqref{stand2}. If $\slam\in\bsb(\ell^2(V))$ is left-invertible, then $$\{ U^*\mphi U\colon \hphi \in \multiskal(\slam)\} = \mathcal{W}(\slam).$$
\end{cor}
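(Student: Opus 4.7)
The plan is to obtain the corollary as a straightforward concatenation of Proposition~\ref{skalarne} with \cite[Theorem~3.6]{b-d-p-p-2017}. The former provides a bijective correspondence between $\multiskal(\slam)$ and the classical multiplier algebra $\multi$, and records that the associated multiplication operators agree after conjugation by $U$. The latter identifies the set of all classical multiplication operators $\{\mphi^\lambdab : \hphi \in \multi\}$ with $\mathcal{W}(\slam)$. Chaining the two identifications gives the required equality.

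More concretely, the first step is the following. Given $\hpsi \in \multiskal(\slam)$, by definition there is a scalar sequence $\{a_n\}_{n=0}^\infty \subset \C$ such that $\hpsi(n) = a_n I_{\nul{\slam^*}}$ for all $n$; set $\hphi(n) = a_n$. Proposition~\ref{skalarne} then asserts $\hphi \in \multi$ and yields the operator identity $\mphi^\lambdab = U^* \mpsi U$. Conversely, for any $\hphi \in \multi$, the sequence $\hpsi(n) = \hphi(n) I_{\nul{\slam^*}}$ lies in $\multiskal(\slam)$ and satisfies the same relation. Consequently,
\[
\{U^* \mpsi U : \hpsi \in \multiskal(\slam)\} = \{\mphi^\lambdab : \hphi \in \multi\}.
\]

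The second step is to invoke \cite[Theorem~3.6]{b-d-p-p-2017}, which under assumption~\eqref{stand2} identifies $\{\mphi^\lambdab : \hphi \in \multi\}$ with $\mathcal{W}(\slam)$, the WOT-closed algebra generated by $\slam$ and the identity. Combining these two equalities completes the proof. There is no genuine obstacle here: both halves of the argument have already been carried out, in Proposition~\ref{skalarne} and in \cite[Theorem~3.6]{b-d-p-p-2017}. The only thing to double-check is that the correspondence $\hpsi \leftrightarrow \hphi$ is bijective and compatible with the assignment of multiplication operators, which is explicit in the statement of Proposition~\ref{skalarne}.
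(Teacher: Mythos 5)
Your argument is correct and is essentially identical to the paper's, which likewise obtains the corollary immediately by combining Proposition~\ref{skalarne} with \cite[Theorem 3.6]{b-d-p-p-2017}. The bijection $\hpsi \leftrightarrow \hphi$ and the identity $\mphi^\lambdab = U^*\mpsi U$ that you spell out are exactly the content of Proposition~\ref{skalarne}, so nothing further is needed.
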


\section{Balanced weighted shifts on directed trees}
In this section, we restrict our attention to the particular case of weighted shifts. This special class of operators admit the Wold-type decomposition. Below, we recall the definition of balanced weighted shifts on directed trees.
\begin{dfn} 
Let $\tcal=(V,E)$ be a countably infinite rooted and leafless directed tree, $\lambdab=\{\lambda_v\}_{v \in V^\circ}\subseteq  (0,\infty)$ and let $\slam$ be a bounded weighted shift on $\tcal$ with weights $\lambdab$. If $$\|\slam e_u\| = \|\slam e_v\| \text{ for every } u, \, v \in V \text{ such that } |u| = |v|,$$ then we say that $\slam$ is \textit{balanced}.
\end{dfn}
It is a large class of operators. The examples can be found in \cite[Section 5]{c-p-t-2} or in Example \ref{4}.  
First, let us note that the orthogonality of two vectors concentrated on one generation (i.e. elements of $\ell^2(V_k)$ for some $k \in \N_0$) is preserved by $\slam$. The proof of this fact is based on the following lemma.
\begin{lem}\label{orto1}
Assume \eqref{stand1}. Let $\slam\in\bsb(\ell^2(V))$ be balanced and let $f \in \ell^2(V_k)$, $g\in \ell^2(V_l)$ for some $k$, $l \in \N_0$. If $u'\in V_{k+n}$, then 
\begin{equation}\label{fsfg}\is{\slam^n f}{ \slam^n g}=\prod_{j=1}^{n}\|\slam e_{\pan{j}{u'}}\|^2\is{f}{g}, \quad n\in\N_0.\end{equation}
\end{lem}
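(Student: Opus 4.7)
The plan is to reduce the identity to a direct calculation based on the explicit action of $\slam^n$ on basis vectors, and then isolate the step where balancedness is used.

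First I would dispose of the trivial case $k\neq l$. Since $\ell^2(V_k)\perp\ell^2(V_l)$, we have $\is{f}{g}=0$; moreover $\slam^n f\in\ell^2(V_{k+n})$ and $\slam^n g\in\ell^2(V_{l+n})$ live in orthogonal subspaces, so $\is{\slam^n f}{\slam^n g}=0$ as well. Hence \eqref{fsfg} holds with both sides zero, independently of the balanced hypothesis.

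Now assume $k=l$. An easy induction (directly from the defining formula of $\slam$) gives
$$\slam^n e_u=\sum_{w\in\dzin n u}\lambda_{u|w}\,e_w,\qquad u\in V,\ n\in\N_0,$$
so that $(\slam^n f)(w)=\lambda_{\pan n w|w}\,f(\pan n w)$ for $w\in V_{k+n}$. Expanding the inner product and grouping $w\in V_{k+n}$ by their $n$-fold parent $u=\pan n w\in V_k$ yields
$$\is{\slam^n f}{\slam^n g}=\sum_{u\in V_k}f(u)\overline{g(u)}\sum_{w\in\dzin n u}\lambda_{u|w}^{2}.$$
The whole problem is therefore reduced to the key claim
$$\sum_{w\in\dzin n u}\lambda_{u|w}^{2}=\prod_{j=1}^{n}\|\slam e_{\pan j{u'}}\|^{2}\qquad\text{for every }u\in V_k,\ u'\in V_{k+n}.\quad(\ast)$$
Note that the right-hand side depends only on the generation $k$ (since $|\pan j{u'}|=k+n-j$), so $(\ast)$ is really the statement that the left-hand side is independent of the particular $u\in V_k$.

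I would prove $(\ast)$ by induction on $n$. For $n=0$ both sides equal $1$. For the inductive step I split each $w\in\dzin n u$ according to its parent $v=\pa w\in\dzin{n-1}u$; since $\lambda_{u|w}^2=\lambda_{u|v}^2\lambda_w^2$, I get
$$\sum_{w\in\dzin n u}\lambda_{u|w}^{2}=\sum_{v\in\dzin{n-1}u}\lambda_{u|v}^{2}\sum_{w\in\dzi v}\lambda_w^{2}=\sum_{v\in\dzin{n-1}u}\lambda_{u|v}^{2}\,\|\slam e_v\|^{2}.$$
This is the one and only place where the balanced hypothesis is essential: every $v\in\dzin{n-1}u$ satisfies $|v|=k+n-1=|\pa{u'}|$, so balancedness gives $\|\slam e_v\|^{2}=\|\slam e_{\pa{u'}}\|^{2}$, a constant that can be pulled out of the sum. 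Applying the inductive hypothesis to the remaining sum with the point $\pa{u'}\in V_{k+n-1}$ produces $\prod_{j=1}^{n-1}\|\slam e_{\pan j{\pa{u'}}}\|^{2}=\prod_{j=2}^{n}\|\slam e_{\pan j{u'}}\|^{2}$, and together with the extracted factor $\|\slam e_{\pa{u'}}\|^{2}$ this yields exactly $\prod_{j=1}^{n}\|\slam e_{\pan j{u'}}\|^{2}$.

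Because the quantity in $(\ast)$ no longer depends on $u$, it pulls out of $\sum_{u\in V_k}f(u)\overline{g(u)}=\is{f}{g}$, proving \eqref{fsfg}. The only real obstacle is recognizing that the balanced condition is precisely what decouples the descendant-sum from the starting vertex $u$; once this is seen, the rest is a bookkeeping induction.
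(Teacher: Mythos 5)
Your proof is correct and rests on the same key observation as the paper's: balancedness makes $\|\slam e_v\|^2$ constant on each generation, so it factors out of the sum over that generation. The paper organizes the induction slightly differently --- it proves the case $n=1$ for arbitrary $f\in\ell^2(V_k)$, $g\in\ell^2(V_l)$ and then iterates $\is{\slam^n f}{\slam^n g}=\|\slam e_{\pan{1}{u'}}\|^2\is{\slam^{n-1}f}{\slam^{n-1}g}$ using $\slam^{n-1}f\in\ell^2(V_{k+n-1})$, whereas you expand $\slam^n$ on basis vectors and induct on the descendant-sum identity $(\ast)$ --- but the mathematical content is the same.
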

\begin{proof} The equality \eqref{fsfg} is obvious for $n=0$. Assume now that $n=1$. Note that $\slam f\in \ell^2(V_{k+1})$ and $\slam g\in \ell^2(V_{l+1})$, since $f \in \ell^2(V_k)$, $g\in \ell^2(V_l)$. If $l\neq k$, then $ \ell^2(V_{k+1}) \perp \ell^2(V_{l+1})$ and $ \ell^2(V_k) \perp \ell^2(V_l)$. Hence $\is{\slam f}{\slam g}=0$ and $\is{f}{g}=0$. Assume now, that $l=k$. By the definition of the subspace $\ell^2(V_k)$ we have $f=\sum_{u\in V_k}f(u) e_u$ and $g=\sum_{v\in V_k}g(v) e_v$. Since $\slam$ is balanced, $\|\slam e_u\|=\|\slam e_{\pa{u'}}\|$ for every $u\in V_k$. Then the definition of $\slam$ and orthogonality of the family $\{e_u\colon u\in V\}$ implies the following equalities 
\begin{align*}
    \is{\slam f}{\slam g} &= 
    \big\langle \slam \big( \sum_{u \in V_k} f(u) e_u \big), \slam \big( \sum_{v \in V_k} g(v) e_v \big) \big\rangle
    \\&=\big\langle  \sum_{u \in V_k} \sum_{w \in \dzi{u}} \lambda_w f(u) e_w , \sum_{v \in V_k} \sum_{z \in \dzi{v}} \lambda_z g(v) e_z  \big\rangle\\
    &=\sum_{u,v \in V_k}\sum_{w \in \dzi{u}}\sum_{z \in \dzi{v}} \lambda_w f(u) \overline{\lambda_z g(v)} \langle e_w, e_z \rangle
    =\sum_{u \in V_k}\sum_{w \in \dzi{u}} |\lambda_w|^2 f(u) \overline{g(u)}\\
    &=\sum_{u \in V_k} \|\slam e_u\|^2 f(u) \overline{g(u)} 
    =\|\slam e_{\pa{u'}}\|^2\sum_{u \in V_k}  f(u) \overline{g(u)} =\|\slam e_{\pa{u'}}\|^2\is{f}{g}.
\end{align*}
The rest of the claim follows by the inductive argument.
\end{proof}
\begin{cor}
Assume \eqref{stand1}. Let $\slam\in\bsb(\ell^2(V))$ be balanced and let $f \in \ell^2(V_k)$, $g\in \ell^2(V_l)$ for some $k$, $l \in \N_0$. If $f$ and $g$ are orthogonal then
$$\slam^n f\perp \slam^n g, \quad n\in\N_0.$$
\end{cor}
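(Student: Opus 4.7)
The plan is to deduce the corollary as an immediate consequence of Lemma \ref{orto1}, so the proof should be very short; the only point requiring a small verification is that the right-hand side of the formula in Lemma \ref{orto1} vanishes.

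First I would split into two cases according to whether $k=l$ or not. If $k\neq l$, then the subspaces $\ell^2(V_k)$ and $\ell^2(V_l)$ are orthogonal and the orthogonality hypothesis on $f$ and $g$ is automatic. Since $\slam$ sends $\ell^2(V_j)$ into $\ell^2(V_{j+1})$ (this is immediate from the definition of $\slam$, which multiplies $e_u$ by $\lambda_v$ on each $v\in\dzi{u}$), iterating gives $\slam^n f\in\ell^2(V_{k+n})$ and $\slam^n g\in\ell^2(V_{l+n})$; as $k+n\neq l+n$ these subspaces are orthogonal, so $\is{\slam^n f}{\slam^n g}=0$.

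In the case $k=l$, I would distinguish two sub-cases depending on whether $V_{k+n}$ is empty or not. If $V_{k+n}=\emptyset$, then $\slam^n f$ and $\slam^n g$ both lie in $\ell^2(V_{k+n})=\{0\}$, so the inner product vanishes trivially. Otherwise, I pick any $u'\in V_{k+n}$ and apply Lemma \ref{orto1} directly; the formula
\[
\is{\slam^n f}{\slam^n g}=\prod_{j=1}^{n}\|\slam e_{\pan{j}{u'}}\|^2\cdot\is{f}{g}
\]
shows that $\is{\slam^n f}{\slam^n g}=0$, because the second factor is zero by the orthogonality hypothesis on $f$ and $g$.

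There is no real obstacle here: the balancedness has already been absorbed in Lemma \ref{orto1}, and the present corollary is just the observation that its right-hand side contains $\is{f}{g}$ as a factor. The only minor care point is the possibly empty generation $V_{k+n}$ in the case $k=l$, which is handled separately as above (the hypothesis \eqref{stand1} does not include leaflessness, so this case must be addressed).
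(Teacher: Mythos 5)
Your proposal is correct and coincides with the paper's (implicit) argument: the corollary is stated without proof precisely because it follows at once from Lemma \ref{orto1}, whose right-hand side contains the factor $\is{f}{g}=0$. The extra care you take with the cases $k\neq l$ and $V_{k+n}=\emptyset$ is sound and harmless, though the paper treats these as not worth mentioning.
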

By the above Corollary and \cite[Theorem 6.4]{b-d-p-p-2017} we deduce that balanced weighted shifts are in fact orthogonal sums of classical weighted shifts.
\begin{cor}
Assume \eqref{stand1}. Let $\slam\in\bsb(\ell^2(V))$ be balanced and let $\{e'_j\}_{j\in J}$ be a separated basis of $\nul{\slam^*}$.  Then
$$ \displaystyle \slam = \bigoplus_{j \in J} \slam|_{\overline{\lin\{ \slam^n e'_j \colon n \in \N_0\}}}.$$
\end{cor}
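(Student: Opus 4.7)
The plan is to verify three properties of the cyclic subspaces $\kk_j := \overline{\lin\{\slam^n e'_j \colon n\in\N_0\}}$: each $\kk_j$ is $\slam$-invariant, the family $\{\kk_j\}_{j\in J}$ is pairwise orthogonal, and $\bigvee_{j\in J}\kk_j = \ell^2(V)$. Invariance is immediate because $\slam(\slam^n e'_j) = \slam^{n+1}e'_j \in \kk_j$. The density follows from the identity $\ell^2(V) = \bigvee_{n=0}^\infty \slam^n \nul{\slam^*}$ (a consequence of \cite[Theorem 6.4]{b-d-p-p-2017} already invoked in the proof of Proposition \ref{cesaro}): since $\{e'_j\}_{j\in J}$ is an orthonormal basis of $\nul{\slam^*}$, the two closed linear spans coincide.

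The core of the argument is pairwise orthogonality. Since $e'_j\in\ell^2(V_{k_j})$, one has $\slam^n e'_j\in\ell^2(V_{k_j+n})$, and the generation subspaces $\ell^2(V_k)$ are mutually orthogonal; hence $\is{\slam^n e'_j}{\slam^m e'_{j'}}=0$ unless $k_j+n=k_{j'}+m$. In this remaining case, assume without loss of generality $n\geqslant m$. Then $\slam^{n-m}e'_j\in\ell^2(V_{k_{j'}})$ lies in the same generation as $e'_{j'}$, and Lemma \ref{orto1} applied to $f:=\slam^{n-m}e'_j$ and $g:=e'_{j'}$ yields, for any $u'\in V_{k_{j'}+m}$,
\begin{align*}
\is{\slam^n e'_j}{\slam^m e'_{j'}} = \bigg(\prod_{i=1}^m \|\slam e_{\pan{i}{u'}}\|^2\bigg)\is{\slam^{n-m}e'_j}{e'_{j'}}.
\end{align*}
If $n>m$, the right-hand inner product equals $\is{\slam^{n-m-1}e'_j}{\slam^* e'_{j'}}$, which vanishes because $e'_{j'}\in\nul{\slam^*}$. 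If $n=m$, it reduces to $\is{e'_j}{e'_{j'}}=0$ by orthonormality of the separated basis (for $j\neq j'$).

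Granted these three properties, $\{\kk_j\}_{j\in J}$ is an orthogonal decomposition of $\ell^2(V)$ into $\slam$-invariant subspaces, so $\slam=\bigoplus_{j\in J}\slam|_{\kk_j}$. The only mildly nontrivial step is the common-generation case of orthogonality, which is handled by combining Lemma \ref{orto1} with the kernel property of the basis vectors; the rest is routine bookkeeping on generations.
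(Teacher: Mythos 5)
Your proof is correct and follows essentially the same route as the paper, which deduces the statement from the preceding Corollary (orthogonality of $\slam^n f$ and $\slam^n g$ for a fixed power, via Lemma \ref{orto1}) together with the Wold-type identity $\ell^2(V)=\bigvee_{n=0}^\infty\slam^n\nul{\slam^*}$ from \cite[Theorem 6.4]{b-d-p-p-2017}. The only difference is that you give an explicit argument for orthogonality across \emph{different} powers within the same generation (reducing via Lemma \ref{orto1} to $\is{\slam^{n-m-1}e'_j}{\slam^*e'_{j'}}=0$), a case the paper delegates to the cited orthogonal decomposition; this is welcome extra detail rather than a divergence.
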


Below, we give the characterization of commutant of balanced weighted shifts with finite dimensional kernel $\mathcal{N}(\slam^*)$. We will need the following notation which allude to \cite{shi}. 
For two sequences $a = \{a_n\}_{n=0}^\infty$, $b = \{b_n\}_{n=0}^\infty\in\C^{\N_0}$ we define its classical Cauchy multiplication $a*b$ by the formula $(a*b)(n) = \sum_{k=0}^n a_k b_{n-k}$, $n\in \N_0$. 
 For given $\beta_1,\beta_2\in (0,+\infty)^{\N_0}$ we set $$H^\infty(\beta_1,\beta_2):=\big\{a\in\C^{\N_0}\colon a*b\in \ell^2(\beta_2) \text{ for every } b\in\ell^2(\beta_1) \big\},$$
 where for any sequence $\beta = \{\beta_n\}_{n=0}^\infty\subset(0,+\infty)$ the symbol $\ell^2(\beta)$ denotes the weighted $\ell^2$ space $\big\{\{a_n\}_{n=0}^\infty\in\C^{\N_0}\colon \sum_{n=0}^\infty |a_n|^2\beta_n<\infty \big\}$. In particular case $H^\infty(\beta):=H^\infty(\beta,\beta)$ if $\beta\in(0,+\infty)^{\N_0}$.

\begin{lem} \label{hinfty}
Assume \eqref{stand1}. Let $\slam\in\bsb(\ell^2(V))$ be balanced and bounded from below by constant $c>0$. Assume that $\{e'_j\}_{j \in J}$ is a separated basis of $\nul{\slam^*}$. Then
\begin{enumerate}
    \item[(i)] for every $i,j\in J$ there exists constants $c_{ij}>0$ and $C_{ij}>0$ such that
   $$ c_{ij} \leq \frac{\|\slam^n e'_i\|}{\|\slam^n e'_j\|} \leq C_{ij},  \quad n\in\N_0.$$
   \item[(ii)] $ H^\infty\Big(\{\|S^ne'_j\|^2\}_{n=0}^\infty,\{\|S^ne'_i\|^2\}_{n=0}^\infty\Big)=H^\infty\Big(\{\|\slam^n e_{\koo}\|^2\}_{n=0}^\infty\Big)$ for every $i,j\in J$.
   
\end{enumerate}
\end{lem}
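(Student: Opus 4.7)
For part (i), the plan is to exploit the balanced hypothesis to reduce $\|\slam^n e'_j\|$ to a clean product. Define $a_k = \|\slam e_v\|$ for any $v \in V_k$; this is well-defined by the balanced property. Since $\slam$ is bounded from below by $c$, taking $f = e_v$ in the inequality $\|\slam f\| \geq c\|f\|$ shows $c \leq a_k \leq \|\slam\|$ for every $k \in \N_0$. Now apply Lemma \ref{orto1} with $f = g = e'_j \in \ell^2(V_{k_j})$ (note $\|e'_j\| = 1$) and pick any $u' \in V_{k_j+n}$. Since $\paa^m(u') \in V_{k_j+n-m}$, the balanced property gives
\begin{equation*}
\|\slam^n e'_j\|^2 = \prod_{m=1}^{n} \|\slam e_{\paa^m(u')}\|^2 = \prod_{l=k_j}^{k_j+n-1} a_l^2, \qquad n\in\N_0,
\end{equation*}
and analogously $\|\slam^n e'_i\|^2 = \prod_{l=k_i}^{k_i+n-1} a_l^2$.

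Next, assume without loss of generality that $k_i \leq k_j$ and set $d = k_j - k_i$. For $n \geq d$, the two products share the common factor $\prod_{l=k_j}^{k_i+n-1} a_l^2$, so after cancellation
\begin{equation*}
\frac{\|\slam^n e'_i\|^2}{\|\slam^n e'_j\|^2} = \frac{\prod_{l=k_i}^{k_j-1} a_l^2}{\prod_{l=k_i+n}^{k_j+n-1} a_l^2}.
\end{equation*}
The numerator is a fixed positive constant, and both the numerator and denominator are products of exactly $d$ terms, each squeezed between $c^2$ and $\|\slam\|^2$, so the whole ratio is trapped between $(c/\|\slam\|)^{2d}$ and $(\|\slam\|/c)^{2d}$. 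The finitely many cases $0 \leq n < d$ contribute only to adjusting the constants $c_{ij}$ and $C_{ij}$ (they are treated the same way, just without an overlap in the two products). Taking square roots yields (i); the case $k_j < k_i$ is symmetric.

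For part (ii), the key observation is that the argument used for part (i) applies equally well with $e_\koo \in \ell^2(V_0)$, since $\|e_\koo\| = 1$ and Lemma \ref{orto1} imposes no condition on the vector beyond being concentrated on a single generation. Therefore, for every $j \in J$, the weight sequences $\{\|\slam^n e'_j\|^2\}_{n=0}^\infty$ and $\{\|\slam^n e_\koo\|^2\}_{n=0}^\infty$ are comparable (there exist constants $0 < \alpha_j \leq \beta_j$ with $\alpha_j \|\slam^n e_\koo\|^2 \leq \|\slam^n e'_j\|^2 \leq \beta_j \|\slam^n e_\koo\|^2$ for every $n \in \N_0$). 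Comparable weight sequences produce the same weighted $\ell^2$ space with equivalent norms; consequently, both $\ell^2(\{\|\slam^n e'_j\|^2\})$ and $\ell^2(\{\|\slam^n e'_i\|^2\})$ coincide, as sets, with $\ell^2(\{\|\slam^n e_\koo\|^2\})$. Since the definition of $H^\infty(\beta_1,\beta_2)$ uses only the sets $\ell^2(\beta_1)$ and $\ell^2(\beta_2)$ (not their specific norms), the equality in (ii) follows immediately. The only delicate step is the bookkeeping in part (i); once that is in place, part (ii) is a short formal deduction.
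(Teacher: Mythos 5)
Your proposal is correct and follows essentially the same route as the paper: part (i) is exactly the paper's argument (the product formula from Lemma \ref{orto1} for vectors supported on a single generation, combined with the bounds $c\leq\|\slam e_u\|\leq\|\slam\|$, so that the ratio of the two length-$n$ products telescopes to at most $|k_i-k_j|$ uncancelled factors). For (ii) you merely spell out in more detail what the paper dispatches in one line — that (i) applied with $e_{\koo}\in\ell^2(V_0)\cap\nul{\slam^*}$ makes all the weight sequences comparable, hence the weighted $\ell^2$ spaces coincide as sets and the $H^\infty$ spaces agree — which is the same idea.
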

   \begin{proof}
(i) Let $\{c_n\}_{n=0}^\infty \subset \R$ be a sequence such that $\|\slam e_u\| = c_n$ if $|u|=n$. Let $k_l\in\N_0$ satisfies the condition $e'_l\in\ell^2(V_{k_l})$ for $l\in J$. Then by \eqref{orto1} for every $n >| k_j - k_i|$ we have the estimation
\begin{align*}
   \frac{ \|\slam^n e'_i\|}{\|\slam^n e'_j\|} =\frac{ c_{k_i}\cdots c_{k_i+n-1} }{ c_{k_j}\ldots c_{k_j+n-1}} \leq \bigg(\frac{\|\slam\|}{c}\bigg)^{|k_j-k_i|}.
\end{align*}
This completes the proof since $i$, $j \in J$ are arbitrary.

(ii) It follows from (i), since we can choose $\{e'_j\}_{j \in J}$ such that $e_{\koo}=e'_j$ for some $j \in J$.
\end{proof}

 \begin{rem}\label{balanced} Assume \eqref{stand1}. Suppose that $\slam\in\bsb(\ell^2(V))$ is balanced.
 Then, \cite[Theorem 6.4.]{b-d-p-p-2017} implies that every $f\in\ell^2(V)$ can be decomposed as $\bigoplus_{n=0}^\infty \slam^nf_n$, where $f_n\in\nul{\slam^*}$ for $n\in\N_0$. Thus we have the following characterization of $\calh$.
 Namely, if $\{f_n\}_{n=0}^\infty\subset\nul{\slam^*}$, then
 \begin{align} \label{balanced1}
 \begin{gathered}
    \sum_{n=0}^\infty f_nz^n \text{ is convergent for every } z\in\Dbb_r \text{ and } \ff\in\calh, \text{ where } \ff(z)=\sum_{n=0}^\infty f_nz^n,\, z\in\Dbb_r,\\ \text{ if and only if } \sum_{n=0}^\infty \big\|\slam^n f_n\big\|^2<\infty. 
 \end{gathered}
 \end{align}
 \end{rem}
\begin{thm} \label{kom}
Let $\tcal=(V,E)$ be a countably infinite rooted and leafless directed tree, and $\lambdab=\{\lambda_v\}_{v \in V^\circ}\subseteq  (0,\infty)$. Assume that $\slam\in\bsb(\ell^2(V))$ is balanced and left-invertible. Assume also that 
$\dim \nul{\slam^*}<\infty$ and $\{e'_j\}_{j \in J}$ is a separated basis of $\nul{\slam^*}$. Then $$\calm(\slam)=\Big\{\hphi\colon \N_0\to\bsb\big(\nul{\slam^*}\big)\Big| \big\{\bis{\hphi(n)e'_j}{e'_i}\big\}_{n=0}^\infty\in H^\infty\big(\{\|S^ne_{\koo}\|^2\}_{n=0}^\infty\big), \, i,j\in J \Big\}.$$
\end{thm}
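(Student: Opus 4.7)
The strategy is to translate everything into concrete coordinates using the separated basis $\{e'_j\}_{j\in J}$, show that $\calh$ is isomorphic with equivalent norm to the finite direct sum $\bigoplus_{j\in J}\ell^2(\beta)$ with $\beta_n=\|\slam^n e_{\koo}\|^2$, and then read the generalized multiplier condition off the matrix entries $\varphi_{ij}(k):=\bis{\hphi(k)e'_j}{e'_i}$.

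First, I would give a concrete model for $\calh$. For every $\ff\in\calh$, write $\hat\ff(n)=\sum_{j\in J}c_j^{(n)}e'_j$ with $c_j^{(n)}=\bis{\hat\ff(n)}{e'_j}$. Since $\slam$ is balanced and $e'_j\in\ell^2(V_{k_j})$, the family $\{\slam^n e'_j\}_{j\in J}$ is orthogonal for every $n\in\N_0$: vectors coming from different generations are automatically orthogonal as $\slam^n e'_j\in \ell^2(V_{k_j+n})$, while within a single generation the orthogonality of $e'_i$ and $e'_j$ is preserved by $\slam^n$ via Lemma \ref{orto1}. Consequently
\begin{align*}
\|\slam^n\hat\ff(n)\|^2=\sum_{j\in J}|c_j^{(n)}|^2\,\|\slam^n e'_j\|^2.
\end{align*}
By Remark \ref{balanced}, $\ff\in\calh$ if and only if $\sum_{n=0}^\infty\|\slam^n\hat\ff(n)\|^2<\infty$. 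Combining this with the uniform weight comparison from Lemma \ref{hinfty}(i), and using that $J$ is finite by hypothesis, the membership $\ff\in\calh$ is equivalent to $c_j:=\{c_j^{(n)}\}_{n=0}^\infty\in\ell^2(\beta)$ for every $j\in J$, with equivalent norms; this identifies $\calh$ with $\bigoplus_{j\in J}\ell^2(\beta)$ where $\beta_n=\|\slam^n e_{\koo}\|^2$ (one may assume the separated basis contains $e_{\koo}$).

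Next, I would rewrite the Cauchy-type multiplication in these coordinates. A direct expansion gives
\begin{align*}
(\hphi*\hat\ff)(n)=\sum_{i\in J}\Big(\sum_{j\in J}(\varphi_{ij}*c_j)(n)\Big)e'_i,
\end{align*}
where $\varphi_{ij}*c_j$ denotes the classical scalar Cauchy product. Thus under the identification above, $\mphi$ acts as the finite matrix $[\varphi_{ij}]_{i,j\in J}$ of scalar Cauchy multipliers on $\bigoplus_{j\in J}\ell^2(\beta)$. Both inclusions of the theorem follow immediately. For $\supseteq$: if every $\varphi_{ij}\in H^\infty(\beta)$, then for each $\ff\in\calh$ each $\varphi_{ij}*c_j$ belongs to $\ell^2(\beta)$, and a finite sum of sequences in $\ell^2(\beta)$ stays in $\ell^2(\beta)$; hence $\dz{\mphi}=\calh$, and Lemma \ref{multipod}(iv) together with the closed graph theorem yields $\mphi\in\bsb(\calh)$, i.e.\ $\hphi\in\calm(\slam)$. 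For $\subseteq$: given $\hphi\in\calm(\slam)$, $i,j\in J$, and $c\in\ell^2(\beta)$, I would define $\ff\in\calh$ by placing $c$ in the $j$-th coordinate and zero elsewhere; the $i$-th coordinate of $\mphi\ff$ then equals $\varphi_{ij}*c$, which must lie in $\ell^2(\beta)$ since $\mphi\ff\in\calh$, proving $\varphi_{ij}\in H^\infty(\beta,\beta)=H^\infty(\beta)$.

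The only genuinely substantive step is the identification $\calh\cong\bigoplus_{j\in J}\ell^2(\beta)$ with \emph{equivalent} norms. Here three ingredients are used simultaneously: the balanced assumption (through the orthogonality supplied by Lemma \ref{orto1}), the finite dimensionality of $\nul{\slam^*}$, and the uniform weight comparison of Lemma \ref{hinfty}(i) that replaces each $\|\slam^n e'_j\|^2$ by $\|\slam^n e_{\koo}\|^2$. Once this identification is in place, the matrix form of the Cauchy multiplication and the definition of $H^\infty(\beta)$ do the rest.
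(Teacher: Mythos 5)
Your proposal is correct and follows essentially the same route as the paper: both identify $\calh$ via the orthogonality from Lemma \ref{orto1} and Remark \ref{balanced} with coordinate sequences in $\ell^2(\{\|\slam^n e'_j\|^2\}_{n})$, pass to the single weight $\{\|\slam^n e_{\koo}\|^2\}_{n}$ by Lemma \ref{hinfty}, write $\mphi$ as the finite matrix of scalar Cauchy multipliers $[\varphi_{ij}]$, and prove the two inclusions by testing on single-coordinate functions and by summing finitely many $\ell^2$ sequences, respectively. No gaps; the packaging as a norm-equivalent isomorphism $\calh\cong\bigoplus_{j\in J}\ell^2(\beta)$ is only a cosmetic difference from the paper's in-line computation.
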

\begin{proof}
 Let $\ff = \sum_{n=0}^\infty \hat{\ff}(n) z^n$ be a vector in $\aa(\nul{\slam^*})$ and let $\alpha_{n,j}=\is{\hat{\ff}(n)}{e'_j}$ for $n\in\N_0$ and $j\in J$. Then
\begin{align*}
    \sum_{j \in J} \sum_{n=0}^\infty  | \alpha_{n,j}|^2 \| \slam^n  e'_j  \|^2 = \sum_{n=0}^\infty \sum_{j \in J} | \alpha_{n,j}|^2 \| \slam^n  e'_j  \|^2 = \sum_{n=0}^\infty \big\| \slam^n \big(\sum_{j \in J} \alpha_{n,j} e'_j \big) \big\|^2= \sum_{n=0}^\infty \big\| \slam^n \hat{\ff}(n) \big\|^2.
\end{align*} Hence, by 
 \eqref{balanced1}
 $$ \ff\in\calh \text{ if and only if } \{\alpha_{n,j}\}_{n=0}^\infty \in \ell^2\big(\{\|\slam^n e'_j\|^2\}_{n=0}^\infty\big) \text{ for every } j\in J.$$

Let $\hphi\colon \N_0\to\bsb(\nul{\slam^*})$ and let $a_{i,j}^{(n)}:=\is{\hphi(n)e'_j}{e'_i}$ for $n\in\N_0$ and $i,j\in J$. Then $\hphi$ can be represented as the matrix: 
$$ \hphi(n) = \Big[ a_{i,j}^{(n)}\Big]_{i,j \in J}, \quad n \in \N_0.$$ Note that
\begin{align} \label{pon}
    \notag \sum_{n=0}^\infty\|\slam^n \big((\hphi* \hat{\ff}) (n)\big) \|^2 &=\sum_{n=0}^\infty \Big\|\slam^n \Big(\sum_{k=0}^n \hphi(k) \hat{\ff}(n-k) \Big)  \Big\|^2  = \sum_{n=0}^\infty\Big\|\slam^n \Big( \sum_{k=0}^n \hphi(k) \sum_{j \in J} \alpha_{n-k,j} e'_j \Big)\Big\|^2 \\
     &=  \sum_{n=0}^\infty \Big\| \sum_{j \in J} \sum_{k=0}^n \alpha_{n-k,j}\slam^n \Big( \hphi(k)e'_j\Big) \Big\|^2=   \sum_{n=0}^\infty \Big\| \sum_{j \in J} \sum_{k=0}^n \sum_{i \in J}  \alpha_{n-k,j} a_{i,j}^{(k)} \slam^n e'_i \Big\|^2\\
 \notag   &=\sum_{n=0}^\infty \sum_{i \in J} \Big\| \sum_{k=0}^n \sum_{j \in J}    \alpha_{n-k,j} a_{i,j}^{(k)} \slam^n e'_i \Big\|^2.
 \end{align}
Assume that $\hphi \in \calm(\slam)$ and fix $i_0, j_0 \in J$. Hence, by Remark \ref{balanced} we have  
 \begin{align*}
 \sum_{n=0}^\infty\sum_{i \in J} \Big\| \sum_{k=0}^n \sum_{j \in J}    \alpha_{n-k,j} a_{i,j}^{(k)} \slam^n e'_i \Big\|^2=\sum_{n=0}^\infty\big\|\slam^n \big((\hphi* \hat{\ff}) (n)\big) \big\|^2=\sum_{n=0}^\infty\big\|\slam^n \big(\widehat{\mphi\ff}(n)\big) \big\|^2<\infty,
 \end{align*}
  for any $\ff\in\calh$. Let $\{\alpha_n\}_{n=0}^\infty\in\ell^2(\|\slam^ne'_{j_0}\|^2)$ and define $\gg(z)=\sum_{n=0}^\infty \alpha_ne'_{j_0}z^n$ for $z\in\Dbb_r$. Then $\gg \in \calh$ by \eqref{balanced1} and
$$\sum_{n=0}^\infty \Big|\sum_{k=0}^n   \alpha_{n-k} a_{i_0,j_0}^{(k)} \Big|^2  \|\slam^n e'_{i_0} \|^2 < \infty,$$
which means that $\big\{\alpha_{n}\big\}_{n=0}^\infty * \big\{a_{i_0,j_0}^{(n)}\big\}_{n=0}^\infty \in \ell^2\big(\{\|\slam^n e'_{i_0}\|^2\}_{n=0}^\infty\big).$
Since $\big\{\alpha_{n}\big\}_{n=0}^\infty$ is arbitrary we obtain that \begin{align*}\big\{a_{i_0,j_0}^{(n)}\big\}_{n=0}^\infty\in H^\infty\big(\{\|S^ne'_{j_0}\|^2\}_{n=0}^\infty,\{\|S^ne'_{i_0}\|^2\}_{n=0}^\infty\big), \end{align*}
which is equivalent, by Lemma \ref{hinfty}(ii), to the condition 
\begin{align*}\big\{a_{i_0,j_0}^{(n)}\big\}_{n=0}^\infty\in H^\infty\big(\{\|S^ne_{\koo}\|^2\}_{n=0}^\infty\big). \end{align*}
Thus the left hand side is contained in the right hand side.

Now, suppose that $\hphi$ satisfies 
$$\big\{a_{ij}^{(n)}\big\}_{n=0}^\infty = \big\{\bis{\hphi(n)e'_j}{e'_i}\big\}_{n=0}^\infty\in H^\infty\Big(\{\|S^n e_{\koo}\|^2\}_{n=0}^\infty \Big), \quad i,j\in J,$$
which, by Lemma \ref{hinfty}(ii) is equivalent to the condition
$$\big\{a_{ij}^{(n)}\big\}_{n=0}^\infty = \big\{\bis{\hphi(n)e'_j}{e'_i}\big\}_{n=0}^\infty\in H^\infty\Big(\{\|S^ne'_j\|^2\}_{n=0}^\infty,\{\|S^ne'_i\|^2\}_{n=0}^\infty\Big), \quad i,j\in J.$$ Let $\ff \in \calh$. Then by \eqref{pon}, the definition of $*$, triangle inequality and the fact that $\{\alpha_{n,j}\}_{n=0}^\infty \in \ell^2\big(\{\|\slam^n e'_j\|^2\}_{n=0}^\infty\big)$ for every $ j \in J$ we obtain
\begin{align*}
    \sum_{n=0}^\infty\Big\|\slam^n \big((\hphi* \hat{\ff}) (n)\Big) \big\|^2&=\sum_{n=0}^\infty \sum_{i \in J} \Big\| \sum_{k=0}^n \sum_{j \in J}    \alpha_{n-k,j} a_{i,j}^{(k)} \slam^n e'_i \Big\|^2\\
     &=\sum_{i \in J} \sum_{n=0}^\infty \Big| \sum_{j \in J}\sum_{k=0}^n     \alpha_{n-k,j} a_{i,j}^{(k)}\Big|^2 \cdot\big\| \slam^n e'_i \big\|^2\\
     &=\sum_{i \in J} \sum_{n=0}^\infty \Big| \sum_{j \in J}    \Big( \big\{ \alpha_{k,j}\big\}_{k=0}^\infty * \big\{ a_{i,j}^{(k)}\big\}_{k=0}^\infty \Big)(n)\Big|^2 \cdot\big\| \slam^n e'_i \big\|^2\\
     &\leqslant 2\sum_{i \in J} \sum_{j \in J}\sum_{n=0}^\infty \Big|     \Big( \big\{ \alpha_{k,j}\big\}_{k=0}^\infty * \big\{ a_{i,j}^{(k)}\big\}_{k=0}^\infty \Big)(n)\Big|^2 \cdot\big\| \slam^n e'_i \big\|^2<\infty.
\end{align*}
 Hence $\hphi \in \calm(\slam)$, which completes the proof.
\end{proof}
This combined with Theorem \ref{komutant} give us the following characterization of commutant of balanced weighted shift with finite dimensional kernel $\nul{\slam^*}$. 
\begin{cor}
If the assumption of Theorem \ref{kom} are satisfied and $U$ is given by \eqref{ufor} with $T=\slam$, then the set  $\{\slam\}'$ - the commutant of $\slam$, can be expressed as $$U\{\slam\}'U^*=\Big\{\mphi\Big|  \hphi\colon \N_0\to\bsb\big(\nul{\slam^*}\big), \, \big\{\bis{\hphi(n)e'_j}{e'_i}\big\}_{n=0}^\infty\in H^\infty\Big(\{\|S^ne_{\koo}\|^2\}_{n=0}^\infty\Big), \, i,j\in J \Big\}. $$ 
\end{cor}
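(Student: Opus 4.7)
The plan is to assemble the corollary from the two main ingredients already proved, namely Theorem \ref{komutant} and Theorem \ref{kom}. Since the corollary is really a reformulation, the proof amounts to verifying a set equality, which I would split into the two standard containments.

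For the forward inclusion $U\{\slam\}'U^* \subseteq \{\mphi : \hphi \in \calm(\slam), \ldots\}$, I would take an arbitrary $A \in \{\slam\}'$ and invoke Theorem \ref{komutant} to obtain $\hphi_A \in \calm(\slam)$ such that $A = U^* M_{\hphi_A} U$, which rewrites as $UAU^* = M_{\hphi_A}$. Then I would apply Theorem \ref{kom} to conclude that the matrix entries $\{\bis{\hphi_A(n) e'_j}{e'_i}\}_{n=0}^\infty$ lie in $H^\infty(\{\|\slam^n e_\koo\|^2\}_{n=0}^\infty)$ for all $i,j \in J$.

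For the reverse inclusion, I would start from a sequence $\hphi \colon \N_0 \to \bsb(\nul{\slam^*})$ whose matrix entries against the separated basis satisfy the $H^\infty$ condition. By Theorem \ref{kom} this ensures $\hphi \in \calm(\slam)$, so $\mphi \in \bsb(\calh)$ is well defined. Set $A := U^* \mphi U \in \bsb(\ell^2(V))$; Theorem \ref{cauchym}(ii) says $\mphi$ commutes with $\unitT = U\slam U^*$, and conjugating by $U$ shows $A \in \{\slam\}'$. Hence $\mphi = UAU^* \in U\{\slam\}' U^*$.

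There is no serious obstacle here, as both containments are bookkeeping on the equivalences already set up. The only point that requires a brief sentence of care is that the characterization in Theorem \ref{kom} is stated for \emph{all} generalized multipliers (not for commutant elements), so one should note that Theorem \ref{komutant} produces a generalized multiplier from any element of the commutant, and conversely any generalized multiplier yields a commutant element via $U^*(\cdot)U$ thanks to Theorem \ref{cauchym}(ii). Combining these two directions with the explicit coefficient condition from Theorem \ref{kom} then yields the displayed description of $U\{\slam\}' U^*$.
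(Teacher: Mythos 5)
Your proposal is correct and follows exactly the route the paper intends: the paper derives this corollary by simply combining Theorem \ref{kom} with Theorem \ref{komutant} (and implicitly Theorem \ref{cauchym}(ii) for the reverse containment, as in the earlier corollary on the commutant). Your two-containment write-up is just a careful spelling-out of that same combination.
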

From Theorem \ref{kom} we deduce the following corollary.
\begin{cor} 
Assume \eqref{stand2}. Let $\slam\in\bsb(\ell^2(V))$ be balanced and bounded from below. Assume that $\dim \nul{\slam^*}<\infty$. Then for every $A\in \bsb\big(\nul{\slam^*}\big)$ the sequence $\hphi \colon \N_0 \to \bsb\big(\nul{\slam^*}\big)$ given by the formula $\hphi(n) = \chi_{\{m\}}(n) A$ for some $m \in \N_0$ is a generalized multiplier for $\slam$.
\end{cor}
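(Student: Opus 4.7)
The plan is to read the claim off Theorem \ref{kom}: since $\slam$ is balanced, left-invertible (equivalently bounded from below) and $\dim\nul{\slam^*}<\infty$, the hypotheses of Theorem \ref{kom} are satisfied, so I only need to verify the $H^\infty$-condition on matrix entries. Fix a separated basis $\{e'_j\}_{j\in J}$ of $\nul{\slam^*}$ (Lemma \ref{separated}), which is finite by hypothesis. Since $\hphi(n)=\chi_{\{m\}}(n)A$, the scalar entries
\[
a_{ij}^{(n)}:=\bis{\hphi(n)e'_j}{e'_i}=\chi_{\{m\}}(n)\bis{Ae'_j}{e'_i},\qquad i,j\in J,
\]
form finitely supported sequences; they are nonzero only at $n=m$. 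By Theorem \ref{kom}, it is enough to show that every such sequence lies in $H^\infty(\{\|\slam^n e_\koo\|^2\}_{n=0}^\infty)$.

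Set $\beta_n:=\|\slam^n e_\koo\|^2$ and let $\{b_n\}_{n=0}^\infty\in\ell^2(\beta)$. The Cauchy product $c:=\{a_{ij}^{(n)}\}_{n=0}^\infty*\{b_n\}_{n=0}^\infty$ is simply a shift:
\[
c(n)=\bis{Ae'_j}{e'_i}\,b_{n-m}\quad(n\geqslant m),\qquad c(n)=0\quad(n<m).
\]
Consequently
\[
\sum_{n=0}^\infty |c(n)|^2\beta_n=|\bis{Ae'_j}{e'_i}|^2\sum_{k=0}^\infty |b_k|^2\,\|\slam^{k+m}e_\koo\|^2.
\]
The crux of the argument is then the elementary operator-norm estimate
\[
\|\slam^{k+m}e_\koo\|^2=\|\slam^m\slam^k e_\koo\|^2\leqslant\|\slam\|^{2m}\beta_k,
\]
which reduces the previous sum to $|\bis{Ae'_j}{e'_i}|^2\|\slam\|^{2m}\|\{b_k\}\|_{\ell^2(\beta)}^2<\infty$. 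Hence $c\in\ell^2(\beta)$, so $\{a_{ij}^{(n)}\}_{n=0}^\infty\in H^\infty(\beta)$, and Theorem \ref{kom} yields $\hphi\in\calm(\slam)$.

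No step here is an obstacle: all the analytic work lives in Theorem \ref{kom}, and what remains is the observation that a finitely supported scalar sequence always belongs to $H^\infty$ of a weighted $\ell^2$-space with weights of the form $\|\slam^n e_\koo\|^2$, because shifting the index by $m$ distorts the weight by at most $\|\slam\|^{2m}$.
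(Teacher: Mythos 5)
Your proposal is correct and follows essentially the same route as the paper: reduce to the $H^\infty$-condition of Theorem \ref{kom}, observe that convolving with the finitely supported sequence $\{a_{ij}^{(n)}\}$ is a scaled shift by $m$, and absorb the index shift into the operator norm (the paper uses $\|\slam^m\|^2$ where you use $\|\slam\|^{2m}$, an immaterial difference). No gaps.
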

\begin{proof}
Let $\{e'_j\}_{j\in J}$ be a separated basis of $\nul{\slam^*}$. Fix $i$, $j \in J$ and denote by $a_{i,j} = \{a^{(n)}_{i,j}\}_{n=0}^\infty$ the sequence of complex numbers such that  $a_{i,j}^{(n)}:=\is{\hphi(n)e'_j}{e'_i}$ for $n\in\N_0$. Let $\{\alpha_n\}_{n=0}^\infty \in \ell^2\big(\|\slam^n e_{\koo}\|^2\big)$. Then
\begin{align*}
    \sum_{n=0}^\infty \big| (a_{i,j}* \alpha)(n)\big|^2 \| \slam^n e_{\koo}\|^2&=\sum_{n=m}^\infty \big|a_{i,j}^{(m)}\alpha_{n-m}\big|^2 \| \slam^n e_{\koo}\|^2=|a_{i,j}^{(m)}|^2 \sum_{n=0}^\infty \big|\alpha_n\big|^2 \| \slam^{n+m} e_{\koo}\|^2\\
    &\leq |a_{i,j}^{(m)}|^2  \|\slam^m\|^2 \sum_{n=0}^\infty \big|\alpha_n\big|^2 \| \slam^{n} e_{\koo}\|^2 <\infty.
\end{align*}
This and Theorem \ref{kom} imply that $\hphi\in\calm(\slam)$.
\end{proof}

\section{Reflexivity}
In this section we are going to prove two independent criteria for reflexivity of weighted shifts on directed trees. One of them is a stronger version of \cite[Theorem 4.3]{b-d-p-p-2017}. Let us state two general lemmas (see Section 3 for used notation) for left-invertible, analytic operator $T\in \bsb(\hh)$. The first one shows that all numbers from $\Dbb_r$ are eigenvalues for the operator $\unitT$, which is unitary equivalent to $T$. For the sake of completness we give a proof of this lemma (see also \cite[Proof of Theorem 5.1]{c-t}).
\begin{lem}\label{wlasneT}
Let $T\in\bsb(\hh)$ be left-invertible and analytic. Then 
\begin{enumerate}
\item[(i)] $\unitT^* k_{\calh}(\cdot, \overline{\lambda})e = \lambda k_{\calh}(\cdot, \overline{\lambda})e$ for every $e \in \ee$ and $\lambda \in \Dbb_r$,
\item[(ii)] $\sigma_p(\unitT^*)\supset \Dbb_r$.
\end{enumerate}
\end{lem}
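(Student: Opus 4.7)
The plan is to exploit the reproducing kernel identity from \eqref{wlasnosci} together with the fact that, in Shimorin's model, $\unitT$ is simply multiplication by the independent variable on $\calh$. Both claims follow by well-known reproducing-kernel manipulations; the work is essentially bookkeeping.

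For part (i), I would fix $e\in\ee$ and $\lambda\in\Dbb_r$, and test $\unitT^* k_{\calh}(\cdot,\overline\lambda)e$ against an arbitrary $\ff\in\calh$. Using the definition of the adjoint and \eqref{wlasnosci}, one has
\[
\bis{\ff}{\unitT^* k_{\calh}(\cdot,\overline\lambda)e}_\calh
= \bis{\unitT\ff}{k_{\calh}(\cdot,\overline\lambda)e}_\calh
= \bis{(\unitT\ff)(\overline\lambda)}{e}_\ee.
\]
Since $(\unitT\ff)(z)=z\ff(z)$, the right-hand side equals $\overline\lambda\is{\ff(\overline\lambda)}{e}_\ee=\overline\lambda\is{\ff}{k_{\calh}(\cdot,\overline\lambda)e}_\calh$. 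Conjugate-linearity of the inner product in the second slot then yields $\unitT^* k_{\calh}(\cdot,\overline\lambda)e=\lambda k_{\calh}(\cdot,\overline\lambda)e$, as claimed.

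For part (ii), it is enough to exhibit, for each $\lambda\in\Dbb_r$, a nonzero eigenvector of $\unitT^*$ with eigenvalue $\lambda$; by (i) the candidate is $k_{\calh}(\cdot,\overline\lambda)e$ for some $e\ne 0$. To see that this vector is nontrivial I would pair it with the constant function $Ue\in\calh$. By \eqref{ufor} and the identity $\nul L=\ee$ from \eqref{leftinverse}, we have $L^n e = 0$ for $n\Ge 1$, hence $(Ue)(z)=P_\ee e=e$ for every $z\in\Dbb_r$. The reproducing property \eqref{wlasnosci} then gives
\[
\bis{Ue}{k_{\calh}(\cdot,\overline\lambda)e}_\calh = \bis{(Ue)(\overline\lambda)}{e}_\ee = \|e\|^2>0,
\]
so $k_{\calh}(\cdot,\overline\lambda)e\ne 0$ whenever $e\ne 0$.

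There is no serious obstacle here: the only point that could slip past a reader is the verification that the constant function $(Ue)(z)\equiv e$ actually belongs to $\calh$ and equals $Ue$, but this is immediate from the definition of $U$ in \eqref{ufor} together with $\ee=\nul L$. The argument is entirely formal once the Shimorin model and its reproducing kernel are in hand.
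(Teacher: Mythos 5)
Your proof is correct and follows essentially the same route as the paper: part (i) is the identical chain of equalities using the reproducing property \eqref{wlasnosci} and the fact that $\unitT$ is multiplication by $z$, and part (ii) is deduced from (i) exactly as the paper does, except that you additionally spell out the (correct) verification that $k_{\calh}(\cdot,\overline{\lambda})e\neq 0$ by pairing with the constant function $Ue$ --- a detail the paper leaves implicit.
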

\begin{proof}
(i) Let $\ff\in\calh$, $\lambda\in\Dbb_r$ and $e\in\ee$. By properties \eqref{wlasnosci} and the defintion of $\unitT$ we get the following equalities
\begin{align*}
    \is{\ff}{\unitT^*k_\calh(\cdot,\bar{\lambda})e}_{\calh}=\is{\unitT\ff}{k_\calh(\cdot,\bar{\lambda})e}_{\calh}&=\is{(\unitT\ff)(\bar{\lambda})}{e}_{\ee}\\
    &=\is{\bar{\lambda}\ff(\bar{\lambda})}{e}_{\ee}=\bar{\lambda}\is{\ff(\bar{\lambda})}{e}_{\ee}=\is{\ff}{\lambda k_\calh(\cdot,\bar{\lambda})e}_{\calh},
\end{align*}
which proves (i). 

(ii) It is a direct consequence of (i).
\end{proof}
In the next lemma we consider linear independence of functions  $k_\calh(\cdot,\lambda)e_1$,$\ldots$, $k_\calh(\cdot,\lambda)e_n\in\calh$, which will be used in the proof of the first criterion for reflexivity.
\begin{lem}\label{nzal1} Let $T\in\bsb(\hh)$ be left-invertible and analytic.  
Assume that $e_1,\ldots,e_n\in\ee$ are linearly independent. Then the functions $k_\calh(\cdot,\lambda)e_1$,$\ldots$, $k_\calh(\cdot,\lambda)e_n$ are
linearly independent in $\calh$, where $\lambda\in\Dbb_r$.
\end{lem}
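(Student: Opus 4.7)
The plan is to exploit the reproducing property \eqref{wlasnosci} together with a particularly simple family of test functions, namely the images $U e_i \in \calh$, which by construction are constant $\ee$-valued functions.

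First I would observe that for any $e \in \ee$ the function $Ue \in \calh$ is the constant function $e$. Indeed, from \eqref{leftinverse} we have $Le = 0$, so $P_\ee L^n e = 0$ for $n \geqslant 1$ while $P_\ee L^0 e = P_\ee e = e$; hence formula \eqref{ufor} yields $(Ue)(z) = e$ for every $z \in \Dbb_r$.

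Next, suppose that $c_1,\ldots,c_n \in \C$ satisfy
\begin{align*}
\sum_{i=1}^n c_i \, k_{\calh}(\cdot,\lambda)e_i = 0 \quad \text{in } \calh.
\end{align*}
For each fixed $j\in\{1,\ldots,n\}$, take the inner product of this equation with $Ue_j$ and apply \eqref{wlasnosci}, keeping in mind that $(Ue_j)(\lambda)=e_j$:
\begin{align*}
0 = \Bis{Ue_j}{\sum_{i=1}^n c_i \, k_{\calh}(\cdot,\lambda)e_i}_{\calh}
  = \sum_{i=1}^n \overline{c_i}\, \is{(Ue_j)(\lambda)}{e_i}_{\ee}
  = \sum_{i=1}^n \overline{c_i}\, \is{e_j}{e_i}_{\ee}.
\end{align*}
Varying $j$ from $1$ to $n$, this is exactly the statement that the conjugate vector $(\overline{c_1},\ldots,\overline{c_n})^\top$ is annihilated by the Gram matrix $G = [\is{e_j}{e_i}_{\ee}]_{i,j=1}^{n}$.

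Finally, since $e_1,\ldots,e_n$ are linearly independent in $\ee$, their Gram matrix $G$ is (positive definite and hence) invertible, which forces $c_1 = \cdots = c_n = 0$. This proves the linear independence of $k_{\calh}(\cdot,\lambda)e_1,\ldots,k_{\calh}(\cdot,\lambda)e_n$ in $\calh$. No part of this argument looks difficult; the only point to be attentive to is the sesquilinearity conventions when pulling scalars out of the inner product, and verifying the elementary fact that $Ue$ is the constant function $e$ for $e\in\ee$, which drops out immediately from $\nul{L}=\ee$.
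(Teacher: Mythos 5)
Your proof is correct and uses essentially the same idea as the paper: both arguments exploit that constant $\ee$-valued functions lie in $\calh$ and pair the vanishing linear combination against them via the reproducing property \eqref{wlasnosci}. The only (cosmetic) difference is that the paper tests against the single constant function $\sum_j \alpha_j e_j$, obtaining $\|\sum_j \alpha_j e_j\|^2=0$ directly, whereas you test against each $Ue_j$ separately and invoke invertibility of the Gram matrix.
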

\begin{proof} Fix $\lambda\in\Dbb_r$ and
assume that
\begin{equation*}
\sum_{j=1}^n\alpha_jk_\calh(\cdot,\lambda)e_j=0, \text{ for some } \alpha_1,\ldots,\alpha_n\in\C.
\end{equation*}
 Let $\ff(z)=\sum_{j=1}^n\alpha_je_j$ for every $z\in\Dbb_r$. Then $\ff\in\calh$, since it is a constant function.
 Moreover, we have
 \begin{align*}
\begin{aligned}
    0=\Bis{\ff}{\sum_{j=1}^n\alpha_jk_\calh(\cdot,\lambda)e_j}_\calh&=\sum_{j=1}^n\overline{\alpha_j}\is{\ff}{k_\calh(\cdot,\lambda)e_j}_\calh\\
    &=\sum_{j=1}^n\overline{\alpha_j}\is{\ff(\lambda)}{e_j}_\ee=\Bis{\ff(\lambda)}{\sum_{j=1}^n\alpha_je_j}_\ee=\Big\|\sum_{j=1}^n\alpha_je_j\Big\|^2, \quad \ff\in\calh.
\end{aligned}
\end{align*}
 This and the linear independence of $e_1,\ldots,e_n$ imply $\alpha_1=\ldots=\alpha_n=0$, which completes the proof.  
\end{proof}
Now, we are going to prove two criteria, the main results of this section. First of them is based on the form of the generalized multipliers, whose coefficients are multiple of the identity operator (see Proposition \ref{skalarne}). Let us notice that the criterion for reflexivity of left-invertible analytic operator was stated in \cite[Theorem 4.1 and Corollary 4.5]{c-p-t-2}. Contrary to the mentioned criterion we do not assume that the operator is polynomially bounded and that the spectral radius of the Cauchy dual is not greater than 1, which are quite restrictive (consider the unilateral weighted shift multiplied by a constant).
\begin{thm} \label{T-ref}
Let $T\in\bsb(\hh)$ be left-invertible and analytic. Assume that $$\big\{ U^*\mphi U \colon \hphi \in \multiskal(T)\big\}=\mathcal{W}(T).$$ Then $T$ is reflexive.
\end{thm}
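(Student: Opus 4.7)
The inclusion $\mathcal{W}(T)\subseteq \Alg\Lat T$ always holds, so I only need to establish $\Alg\Lat T\subseteq \mathcal{W}(T)$. Fix $A\in \Alg\Lat T$ and pass to the Shimorin model by setting $\mathscr{A}:=UAU^*$. Since $U$ is unitary and $T$ is unitarily equivalent to $\unitT$, one has $\Lat\unitT=U\Lat T$, so $\mathscr{A}\in\Alg\Lat\unitT$. By the assumption $\{U^*\mphi U\colon \hphi\in\multiskal(T)\}=\mathcal{W}(T)$, it suffices to exhibit $\hphi\in\multiskal(T)$ with $\mathscr{A}=\mphi$.

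\textbf{Step 1: $\mathscr{A}^*$ preserves the eigenlines of $\unitT^*$.} By Lemma \ref{wlasneT}(i), for every $\lambda\in\Dbb_r$ and $e\in\ee$ the vector $k_\calh(\cdot,\bar\lambda)e$ is an eigenvector of $\unitT^*$. Therefore the orthogonal complement $\{k_\calh(\cdot,\bar\lambda)e\}^\perp$ lies in $\Lat\unitT$, so $\mathscr{A}$ leaves it invariant, which is equivalent to
\begin{equation*}
\mathscr{A}^*k_\calh(\cdot,\bar\lambda)e=\overline{c(\lambda,e)}\,k_\calh(\cdot,\bar\lambda)e
\end{equation*}
for some scalar $c(\lambda,e)\in\C$. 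Applying this to $e_1$, $e_2$ and $e_1+e_2$ and invoking Lemma \ref{nzal1} (linear independence of $k_\calh(\cdot,\bar\lambda)e_1$, $k_\calh(\cdot,\bar\lambda)e_2$ whenever $e_1$, $e_2$ are linearly independent) forces $c(\lambda,e_1)=c(\lambda,e_2)$. Hence $c(\lambda,e)$ depends only on $\lambda$; write it as $c(\lambda)$.

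\textbf{Step 2: Multiplicative action of $\mathscr{A}$.} Using the reproducing property \eqref{wlasnosci}, for every $\ff\in\calh$, $e\in\ee$ and $\lambda\in\Dbb_r$,
\begin{equation*}
\bis{(\mathscr{A}\ff)(\lambda)}{e}_\ee=\bis{\mathscr{A}\ff}{k_\calh(\cdot,\bar\lambda)e}_\calh
=\bis{\ff}{\mathscr{A}^*k_\calh(\cdot,\bar\lambda)e}_\calh=c(\lambda)\bis{\ff(\lambda)}{e}_\ee.
\end{equation*}
Therefore $(\mathscr{A}\ff)(\lambda)=c(\lambda)\ff(\lambda)$ for every $\ff\in\calh$ and $\lambda\in\Dbb_r$. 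Taking $\ff=Ue_0$ for a fixed unit vector $e_0\in\ee$ (so $\ff(\lambda)=e_0$) shows that $\lambda\mapsto c(\lambda)e_0=(\mathscr{A}\ff)(\lambda)$ is analytic on $\Dbb_r$, whence $c$ itself is analytic. Write $c(z)=\sum_{n=0}^\infty a_n z^n$.

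\textbf{Step 3: Recognising $\mathscr{A}$ as a scalar generalized multiplier.} Define $\hphi\colon\N_0\to\bsb(\ee)$ by $\hphi(n)=a_n I_\ee$. For any $\ff\in\calh$ expanded as $\ff(z)=\sum_{n=0}^\infty\hat\ff(n)z^n$, multiplying the power series gives
\begin{equation*}
(\mathscr{A}\ff)(z)=c(z)\ff(z)=\sum_{n=0}^\infty\bigg(\sum_{k=0}^n a_k\hat\ff(n-k)\bigg)z^n,
\end{equation*}
so $\widehat{\mathscr{A}\ff}(n)=(\hphi\ast\hat\ff)(n)$ for every $n\in\N_0$. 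Since $\mathscr{A}\ff\in\calh$, this identifies $\ff\in\dz{\mphi}$ and $\mphi\ff=\mathscr{A}\ff$. As $\ff$ was arbitrary, $\dz{\mphi}=\calh$ and $\mphi=\mathscr{A}$, so $\hphi\in\multiskal(T)$ by definition. Consequently $A=U^*\mathscr{A}U=U^*\mphi U\in\mathcal{W}(T)$ by the hypothesis, finishing the proof.

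\textbf{Where the real work lies.} The conceptual input is almost entirely contained in Lemmas \ref{wlasneT} and \ref{nzal1}; the only subtle point is Step 1, where one must combine the eigenvalue relations at different vectors $e$ to conclude that the scalar $c(\lambda)$ is genuinely independent of $e$, together with the verification at the end of Step 3 that $\hphi$ is not merely a formal power series but actually lies in $\multiskal(T)$, i.e.\ that $\dz{\mphi}=\calh$. Once these are in place, the hypothesis of the theorem plugs the algebra of scalar generalized multipliers into $\mathcal{W}(T)$ and delivers reflexivity.
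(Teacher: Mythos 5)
Your argument is correct and follows the same skeleton as the paper's proof: pass to the Shimorin model, use Lemma \ref{wlasneT}(i) to see that $\cala^*$ fixes each line $\C\, k_{\calh}(\cdot,\bar\lambda)e$, use Lemma \ref{nzal1} to show the resulting scalar is independent of $e$, and read off a pointwise multiplication identity $(\cala\ff)(\lambda)=c(\lambda)\ff(\lambda)$. The one genuine difference is in how you convert this into membership in $\multiskal(T)$. The paper first observes that $\cala^*$ commutes with $\unitT^*$ on the (linearly dense) set of kernel vectors, concludes $\cala\unitT=\unitT\cala$, and invokes Theorem \ref{komutant} to get $\cala=\mpsi$ for some $\hpsi\in\calm(T)$; only afterwards does it identify $\hpsi(n)$ as $\widehat{\varphi_0}(n)I_\ee$ by testing on $Ue$. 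You bypass Theorem \ref{komutant} entirely: you get analyticity of $c$ from $\cala Ue_0\in\calh\subseteq\aa(\ee)$, define $\hphi(n)=a_nI_\ee$ from the Taylor coefficients of $c$, and verify $\dz{\mphi}=\calh$ and $\mphi=\cala$ directly via the Cauchy product and uniqueness of (vector-valued) power series coefficients. This is a legitimate streamlining; what the paper's route buys is that the existence of \emph{some} generalized multiplier symbol is handed to you by the commutant theorem, so one only has to check its coefficients are scalar, whereas your route must (and does) re-derive the domain statement $\dz{\mphi}=\calh$ by hand. One cosmetic slip: in Step 2 the reproducing property \eqref{wlasnosci} pairs $\ff(\lambda)$ with $k_{\calh}(\cdot,\lambda)e$, not with $k_{\calh}(\cdot,\bar\lambda)e$, so your chain actually yields $(\cala\ff)(\mu)=c(\bar\mu)\ff(\mu)$; this is exactly the conjugation bookkeeping the paper absorbs by setting $\varphi_0(\lambda)=\overline{\varphi(\bar\lambda,e)}$, and relabelling $c$ accordingly fixes it without affecting anything downstream.
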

\begin{proof} Let $A \in \Alg \Lat T$, then
$\mathcal{A}^* \in \Alg \Lat\unitT^*$, where $\mathcal{A}=UAU^*$. By Lemma \ref{wlasneT}(i) there exists a function $\varphi \colon \Dbb_r \times \ee \to \C$ such that
\begin{align}\label{akalambdae}
    \cala^* k_{\calh}(\cdot, \overline{\lambda})e = \varphi(\lambda,e)k_{\calh}(\cdot, \overline{\lambda})e. 
\end{align}
Hence Lemma \ref{wlasneT}(i) implies that 
\begin{align}\label{malaprzem}
   \unitT^* \cala^* k_{\calh}(\cdot, \overline{\lambda})e = \varphi(\lambda,e)\lambda k_{\calh}(\cdot, \overline{\lambda})e= \cala^*\unitT^* k_{\calh}(\cdot, \overline{\lambda})e. 
\end{align}
By \eqref{wlasnosci} the set $\{k_{\calh}(\cdot, \overline{\lambda})e\colon \lambda\in\Dbb_r, e\in\ee\}$ is linearly dense in $\calh$. This and equation \eqref{malaprzem} leads to the equality $\unitT \cala=\cala\unitT$. Hence by Theorem \ref{komutant} 
\begin{equation}\label{Amnoznik} \cala=M_{\hpsi} \text{ for some } \hpsi\in \calm(T).\end{equation} 

Let $k_{\lambda,e}:=k_\calh(\cdot,\bar{\lambda})e$, where $\lambda\in\Dbb_r$, $e\in\ee$. We are going to show that the value of $\varphi(\lambda,e)$ does not depend
on $e\in\ee\setminus\{0\}$. 
First, we assume that $\dim\ee=1$. Let $e\in\ee\setminus\{0\}$ and $\alpha\in\C\setminus\{0\}$. It is obvious
that $k_{\lambda,\alpha e}=\alpha k_{\lambda,e}$. Hence by \eqref{akalambdae}, we have
\begin{align*}
  \varphi(\lambda,\alpha e)k_{\lambda,e}=\alpha^{-1} \varphi(\lambda,\alpha e)k_{\lambda,\alpha e}=\alpha^{-1} \cala^*k_{\lambda,\alpha e}=\cala^*k_{\lambda,e}=\varphi(\lambda,e)k_{\lambda,e}.
\end{align*}
Therefore $ \varphi(\lambda,\alpha e)=\varphi(\lambda,e)$, since $k_{\lambda,e}\neq0$, by Lemma \ref{nzal1}.

Now, let $\dim\ee\geqslant2$. Choose $e,e'\in\ee$ such that $e$ and $e'$ are linearly independent. Applying \eqref{akalambdae} once more we obtain
\begin{align*}
\varphi(\lambda,e)k_{\lambda,e}+\varphi(\lambda,e')k_{\lambda,e'}&= \cala^*k_{\lambda,e}+\cala^*k_{\lambda,e'}=\cala^*k_{\lambda,e+e'}\\&=\varphi(\lambda,e+e')k_{\lambda,e+e'}   
=\varphi(\lambda,e+e')k_{\lambda,e}+\varphi(\lambda,e+e')k_{\lambda,e'}.  
\end{align*}
Hence the linear independence of $k_{\lambda,e}$ and $k_{\lambda,e'}$ implies that $\varphi(\lambda,e)=\varphi(\lambda,e+e')=\varphi(\lambda,e')$.
If $e,e'\in\ee\setminus\{0\}$ are linearly dependent, then we repeat the argument for one-dimensional case.  

Since the values of $\varphi(\lambda,e)$ does not depend on $e\in\ee\setminus\{0\}$ we may define the function $\varphi_0\colon\Dbb_r\to\C$ such that $\varphi_0(\lambda)=\overline{\varphi(\overline{\lambda},e)}$ for every $\lambda\in\Dbb_r$ and $e\in\ee\setminus\{0\}$. Thus by \eqref{wlasnosci} and \eqref{akalambdae} we have the following equalities
\begin{align*}
  \is{(\cala\ff)(\lambda)}{e}_\ee&=\is{\cala\ff}{k_{\bar\lambda,e}}_\calh=\is{\ff}{\cala^*k_{\bar\lambda,e}}_\calh=\is{\ff}{\varphi(\bar\lambda,e)k_{\bar\lambda,e}}_\calh\\&=\is{\overline{\varphi(\overline{\lambda},e)}\ff(\lambda)}{e}_\ee=\is{\varphi_0(\lambda)\ff(\lambda)}{e}_\ee, \quad   \ff\in\calh, \, \lambda\in\Dbb_r, \, e\in\ee. 
\end{align*}
Now, we apply the above equality to function $\ff=Ue$ and by \eqref{Amnoznik} and Lemma 1 we obtain
\begin{align*}
   \|e\|^2\varphi_0(\lambda)&=\bis{\varphi_0(\lambda)(Ue)(\lambda)}{e}_\ee=\bis{(AUe)(\lambda)}{e}_\ee\\
   &=\bis{(M_{\hpsi}Ue)(\lambda)}{e}_\ee=\Bis{\sum_{n=0}^\infty\hpsi(n)e\lambda^n}{e}_\ee=\sum_{n=0}^\infty\is{\hpsi(n)e}{e}_\ee\lambda^n, \lambda\in\Dbb_r.
\end{align*}
Therefore $\varphi_0$ is analytic in $\Dbb_r$. Let $\widehat{\varphi_0}\colon\N_0\to\C$ be the sequence of the coefficients of $\varphi_0$ i.e. $\varphi_0(\lambda)=\sum_{n=0}^\infty\widehat{\varphi_0}(n)\lambda^n$ for every $\lambda\in\Dbb_r$. In particular, $\widehat{\varphi_0}(n)=\frac{\is{\hpsi(n)e}{e}_\ee}{\|e\|^2}$ for every $e\in\ee$ and $n\in\N_0$. Thus $\hpsi(n)=\widehat{\varphi_0}(n)I_\ee$ for every $n\in\N_0$, which means that $\hpsi\in \multiskal(T)$.
Hence, by \eqref{Amnoznik} and the assumption of the theorem $$A=U^*\cala U= U^*M_{\hpsi} U\in\mathcal{W}(T),$$ which implies the reflexivity of $T$. 
\end{proof}
In case of left-invertible weighted shift operator on directed tree the assumption of Theorem \ref{T-ref} is automatically satisfied by Corollary \ref{seminarium}. Hence, any such operator is reflexive.
\begin{cor}\label{S-ref}
Assume \eqref{stand2}. If $\slam\in\bsb(\ell^2(V))$ is left-invertible, then $\slam$ is reflexive.
\end{cor}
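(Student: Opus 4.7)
The plan is to observe that Corollary \ref{S-ref} is essentially a one-line application of Theorem \ref{T-ref} to the special case $T=\slam$, with the hypothesis of that theorem being exactly what Corollary \ref{seminarium} delivers for weighted shifts on rooted directed trees. So I would not attempt any fresh analysis; I would just verify that the two hypotheses of Theorem \ref{T-ref} are met.

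First, I would check that $\slam$ is left-invertible and analytic. Left-invertibility is in the standing assumption. Analyticity is not in \eqref{stand2}, but it is a known fact (already recalled in the introduction, with reference to \cite[Lemma 3.3]{c-t}) that a left-invertible bounded weighted shift on a leafless, rooted directed tree is analytic; since \eqref{stand2} assumes $\tcal$ is rooted and leafless, this gives analyticity of $\slam$. At this point the Shimorin model from Section 3 is available for $\slam$, and in particular the transformation $U$ and the algebra $\multiskal(\slam)$ are defined.

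Second, I would invoke Corollary \ref{seminarium}, which asserts precisely the equality
\[
\{U^*\mphi U\colon \hphi\in\multiskal(\slam)\}=\mathcal{W}(\slam).
\]
This is the hypothesis of Theorem \ref{T-ref} with $T=\slam$. Applying Theorem \ref{T-ref} then yields reflexivity of $\slam$, which is what we want.

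There is no real obstacle here; the corollary is formal, with all the substantive work done in Corollary \ref{seminarium} (which in turn rests on Proposition \ref{skalarne} and the SOT-approximation machinery of Propositions \ref{cont}--\ref{cesaro}) and in Theorem \ref{T-ref} itself (the reproducing kernel/eigenvector argument based on Lemmas \ref{wlasneT} and \ref{nzal1}). The only tiny point worth being explicit about in the write-up is citing analyticity of $\slam$ under \eqref{stand2}, since \eqref{stand2} alone does not spell it out; after that, chaining Corollary \ref{seminarium} into Theorem \ref{T-ref} finishes the proof.
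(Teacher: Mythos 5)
Your proposal matches the paper's own argument exactly: the paper derives Corollary \ref{S-ref} by noting that Corollary \ref{seminarium} supplies the hypothesis of Theorem \ref{T-ref} for a left-invertible $\slam$ under \eqref{stand2}. Your additional remark that analyticity of $\slam$ must be (and is) guaranteed by \cite[Lemma 3.3]{c-t} for rooted trees is a correct and worthwhile point of care, but it does not change the route.
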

Now, we are going to prove the second criterion for reflexivity of weighted shift operator, which is a generalization of \cite[Theorem 4.3]{b-d-p-p-2017}. Note that we do not assume left-invertibility of the operator. 
\begin{thm} \label{reflex}
Suppose that $\tcal=(V,E)$ is a countably infinite rooted and leafless directed tree,  and $\lambdab=\{\lambda_v\}_{v \in V^\circ}\subseteq  (0,\infty)$. Let $\pth =(V_\pth, E_\pth) \in \pp$ and $\lambdab_\pth=\{\lambda_v\}_{v\in V^\circ_\pth}$. If $\slam\in\bsb(\ell^2(V))$ and $S_{\lambdab_\pth}\in\bsb(\ell^2(V_\pth))$ is reflexive, then $\slam$ is reflexive.
\end{thm}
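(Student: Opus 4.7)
The plan is to show every $A\in\Alg\Lat\slam$ belongs to $\mathcal{W}(\slam)$, which together with the trivial inclusion $\mathcal{W}(\slam)\subseteq\Alg\Lat\slam$ yields reflexivity. Since $\slam$ is not assumed left-invertible, Corollary \ref{S-ref} does not apply directly, and one must combine the tree structure with the reflexivity hypothesis on the path $\pth$.

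First, I would exploit the rich family of invariant subspaces of $\slam$ coming from the tree: (a) for every $u\in V$, the descendant subspace $\ell^2(\des{u})$ lies in $\Lat\slam$, since $\slam e_v$ is supported on $\dzi{v}\subseteq\des{v}$; (b) the cyclic subspace $\mathcal{H}_v:=\overline{\operatorname{span}}\{\slam^n e_v:n\in\N_0\}$ is invariant, and $\{\slam^n e_v\}_{n\ge 0}$ is an orthogonal family because $\slam^n e_v$ is supported on level $|v|+n$. Consequently, for any $A\in\Alg\Lat\slam$, the vector $Ae_v$ lies in $\mathcal{H}_v\cap\ell^2(\des{v})$ and admits a unique expansion $Ae_v=\sum_{n=0}^\infty b_n^{(v)}\slam^n e_v$.

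Second, using the reflexivity of $S_{\lambdab_\pth}$, I would extract a single scalar sequence $\hphi=\{a_n\}_{n=0}^\infty\in\multip$ that controls the action of $A$ along the path. The key step is to argue that a suitable compression or restriction of $A$ associated with $\ell^2(V_\pth)$ lies in $\Alg\Lat S_{\lambdab_\pth}=\mathcal{W}(S_{\lambdab_\pth})$; invoking the reflexivity hypothesis, this compression coincides with a classical multiplication operator $M_{\hphi}^{\lambdab_\pth}$, thereby producing $\hphi$ with $b_n^{(v_m)}=a_n$ for every path vertex $v_m$. Third, I would propagate this sequence from $\pth$ to the whole tree, showing $b_n^{(v)}=a_n$ for every $v\in V$ and $n\in\N_0$. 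This gives $Ae_v=M_{\hphi}^\lambdab e_v$ via the explicit formula \eqref{dziobak}, and by density of $\operatorname{span}\{e_v\}$ and boundedness of $A$ one concludes $A=M_{\hphi}^\lambdab\in\mathcal{W}(\slam)$ using Corollary \ref{seminarium}.

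The main obstacle is precisely this third step: propagating the coefficient sequence from $\pth$ to vertices off the path. A naive attempt using cyclic subspaces of $e_{\koo}+\alpha e_v$ is problematic because $\{\slam^n(e_{\koo}+\alpha e_v)\}_n$ is not orthogonal across $n$, so coefficients cannot simply be read off termwise. Instead, one must combine the descendant-subspace rigidity $Ae_v\in\ell^2(\des{v})$ with the branching structure of $\tcal$ and with the multiplier framework developed in Sections 3 and 4, using carefully chosen invariant subspaces of $\slam$ that link different branches, to force the scalar sequence $\{a_n\}$ obtained on $\pth$ to govern $A$ on every branch. The leafless, rooted hypothesis will be essential here, since it guarantees enough structural connectivity in $\tcal$ to make this propagation work.
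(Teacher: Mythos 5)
Your skeleton matches the paper's: restrict to the path, use reflexivity of $S_{\lambdab_\pth}$ to produce a scalar multiplier $\hphi$ there, propagate it to the whole tree, and conclude $A=M_{\hphi}^{\lambdab}\in\mathcal{W}(\slam)$. But the step you yourself flag as ``the main obstacle'' --- forcing the coefficients $b_n^{(v)}$ off the path to equal the $a_n$ obtained on the path --- is exactly the heart of the proof, and you leave it as a description of what needs to happen rather than an argument. The paper's device, which is the missing idea, is to pass to the adjoint and use, for each $v\in V$ and the path vertex $u$ with $|u|=|v|=:n$, the $\slam^*$-invariant subspace generated by $f=\lambda_{\koo|v}e_u-\lambda_{\koo|u}e_v$. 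The point is that $\slam^{*n}f=0$ (the two terms cancel at the root), so this invariant subspace is \emph{finite-dimensional}, spanned by $f,\slam^*f,\dots,\slam^{*(n-1)}f$; hence $A^*f$ is a finite linear combination $\sum_k\alpha_k\slam^{*k}f$. Comparing coefficients of this expansion with the expression for $A^*f$ coming from $A^*|_{\ell^2(V_\pth)}=(M_{\hphi}^{\lambdab_\pth})^*$ and from $\is{A^*e_v}{e_w}=0$ for $w\not\prec v$ forces $\alpha_k=\overline{\hphi(k)}$ and then $\is{A^*e_v}{e_w}=\lambda_{w|v}\overline{\hphi(|v|-|w|)}$ for every ancestor $w$ of $v$. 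Without identifying these finite-dimensional linking subspaces (or an equivalent mechanism), your proof does not close; your own remark that one should use ``carefully chosen invariant subspaces that link different branches'' is a statement of the problem, not a solution.

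Two secondary points. First, $\ell^2(V_\pth)$ is not $\slam$-invariant (a path vertex may have children off the path), only $\slam^*$-invariant; so the ``suitable compression or restriction'' in your second step must be the restriction $A^*|_{\ell^2(V_\pth)}$ of the adjoint, justified via \cite[Lemma 4.2(ii)]{b-d-p-p-2017} --- a genuine restriction of $A$ to $\ell^2(V_\pth)$ does not make sense here. Second, your final appeal to Corollary \ref{seminarium} is not available: that corollary assumes $\slam$ left-invertible, which Theorem \ref{reflex} deliberately does not; the paper instead concludes $M_{\hphi}^{\lambdab}\in\mathcal{W}(\slam)$ from \cite[Theorem 3.6]{b-d-p-p-2017}, which holds without left-invertibility.
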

\begin{proof}
Suppose that $A\in \Alg \Lat \slam$. Then  $A^*\in \Alg \Lat \slam^*$. 
Careful inspection of \cite[Lemma 4.2(ii)]{b-d-p-p-2017} shows that $A^*|_{\ell^2(V_\pth)}\in \Alg \Lat (\slam^*|_{\ell^2(V_\pth)})$ and
\begin{align}\label{herbata}
    A^*|_{\ell^2(V_\pth)} = \big(M_{\hat \varphi}^{\lambdab_\pth}\big)^{*} \text{ with some } \hat \varphi \in\mathcal{M}(\lambdab_\pth).
\end{align}

Let $v \in V$ and let $u \in \pth$ be such that $|u|=|v|$. Define $n:=|v|$. Let $\mathcal{L}_{uv}$ (resp. $\mathcal{L}_v$) denote the invariant closed subspace for $\slam^*$ generated by $f=\lambda_{\koo|v}e_u-\lambda_{\koo|u}e_v$ (resp. generated by $e_v$). We are going to determine the coefficients $a_{wt}:=\is{A^*e_w}{e_t}$ for $w,t\in V$.

We say that vertex $t$ is an {\it ancestor} of $w$ if $ w \in \des{t}$. We denote this relation by $t \prec w$.  \cite[Equality (4)]{b-d-p-p-2017} gives value of $M_{\hat \varphi}^{\lambdab_\pth}$ on the basis, hence one can show that 
$$ \big(M_{\hat \varphi}^{\lambdab_\pth}\big)^{*} e_w = \sum_{t:t \prec w} \lambda_{t|w} \overline{ \hphi(|w|-|t|)} e_t, \quad w \in V_\pth.$$
First, we observe that $A^*e_v\in \mathcal{L}_v$ , since $\mathcal{L}_v\in \Lat \slam^*\subset \Lat A^*$ and $e_v\in \mathcal{L}_v$. Thus $a_{vw}=0$ if $w\nprec v$. As a consequence, by \eqref{herbata} we get
\begin{align} \label{kapusniak}
    A^*f=\lambda_{\koo|v}\sum_{t:t\prec u}\lambda_{t|u}\overline{\hat{\varphi}(|u|-|t|)}e_t-\lambda_{\koo|u}\sum_{w:w\prec v}a_{vw}e_w.
\end{align}
On the other hand, if $k\in\N_0$ and $k\leq n$, then
\begin{align}\label{nalesniki}
    \slam^{*k}f=\lambda_{\koo|v}\lambda_{\pan{k}{u}|u}e_{\pan{k}{u}}-\lambda_{\koo|u}\lambda_{\pan{k}{v}|v}e_{\pan{k}{v}}.
\end{align}
In particular $\slam^{*n}f=0$. Since $\Lat \slam^*\subset \Lat A^*$ we obtain that $\mathcal{L}_{uv}\in\Lat A^*$. Thus 
\begin{align}\label{pierogi}
    A^*f=\alpha_0f + \alpha_1 \slam^*f + \ldots + \alpha_{n-1} \slam^{*(n-1)} f,
\end{align}
for some $\{\alpha_k\}_{k=0}^{n-1} \subset \C$.

Let $w\prec v$ and let $t \in \pth$ be such that $|t| = |w|=n-k$ for some $k \in \{0,\ldots,n\}$. Now, let us consider two cases:
\begin{itemize}
    \item $t =w$ \newline
     Comparing coefficients at  $e_w$ in formulas \eqref{kapusniak} and \eqref{pierogi} and using \eqref{nalesniki}, we get
        $$ \lambda_{\koo|v} \lambda_{t|u} \overline{\hphi(k)} - \lambda_{\koo|u} a_{vw} = \alpha_k (\lambda_{\koo|v} \lambda_{t|u} - \lambda_{\koo|u} \lambda_{w|v}).$$
        Since, the right hand side is equal to 0 and $\lambda_s\neq0$ for $s\in V^\circ$, we obtain $$a_{vw} = \lambda_{w|v} \overline{\hphi(|v|-|w|)}.$$
    \item $t\neq w$\newline
    Using \eqref{kapusniak}, \eqref{nalesniki}, and \eqref{pierogi} once more and comparing coefficients at $e_t$ and $e_w$ respectively we get
    \begin{align*}
        \lambda_{\koo|v} \lambda_{t|u} \overline{\hphi(k)} = \alpha_k \lambda_{\koo|v} \lambda_{t|u}\quad  \text{ and }  \quad
        \lambda_{\koo|u} a_{vw} = \alpha_k \lambda_{\koo|u} \lambda_{w|v}.
    \end{align*}
    Hence
    $$ \alpha_k = \overline{\hphi(k)} \quad  \text{ and } \quad a_{vw} = \alpha_k \lambda_{w|v} = \lambda_{w|v} \overline{\hphi(|v|-|w|)}.$$
\end{itemize}
Summarizing, for every $v\in V$
\begin{align*}
    \is{A^*e_v}{e_w} = a_{vw}= \left\{ \begin{array}{lc} \lambda_{w|v} \overline{\hphi(|v|-|w|)}& \text{if } w\prec v,\\
    0 & \text{if } w\nprec v.\end{array} \right. .
\end{align*} 

Now, we define the mapping $\gphi^{\lambdab}$  given by the formula \eqref{multiKL1}. We will show that $\hphi$ is a multiplier for $\slam$ (consider the whole tree). 
First, we will show that $\{e_v\colon v\in V\}\subset \dz{M_{\hat \varphi}^\lambdab}$.  For $v\in V$ we have
\begin{align*}
   \sum_{w\in V}\left|\left(\gphi^{\lambdab} e_v\right)(w)\right|^2 =\sum_{w: v \prec w} |\lambda_{v|w} \hphi(|w|-|v|)|^2 = \sum_{w \in V} |\langle e_v,A^*e_w\rangle|^2= \sum_{w \in V} |\langle Ae_v,e_w\rangle|^2 = \|A e_v \|^2 .
\end{align*}
Hence $e_v\in\dz{M_{\hat \varphi}^\lambdab}$ for every $v\in V$. Moreover by \eqref{dziobak}
\begin{align*}
    \is{Ae_v}{e_w}= \left\{ \begin{array}{lc} \lambda_{v|w}\hphi(|w|-|v|)& \text{if } v\prec w,\\
    0 & \text{if } v\nprec w.\end{array} \right. = \langle M_{\hat \varphi}^\lambdab e_v, e_w\rangle.
\end{align*}
Thus we obtain that $A|_{\lin\{e_v\colon v\in V\}}\subset M_{\hat \varphi}^\lambdab$. Since $A\in\bsb(\ell^2(V))$ and $M_{\hat \varphi}^\lambdab$ is closed, we get the equality $A=M_{\hat \varphi}^\lambdab$. Thus
$A\in \mathcal{W}(\slam)$ by \cite[Theorem 3.6.]{b-d-p-p-2017}, which completes the proof.
\end{proof}
Next two examples show that two criteria for reflexivity of weighted shift on directed tree - Theorem \ref{reflex} and Corollary \ref{S-ref} - are independent. 
 \begin{exa}
Let $\tcal_{2} = (V_{2},{E_{2}})$ be the directed tree as in the Example \ref{1alfa}. Define $\pth_1=(V_1,E_1)$ where
\begin{align*}
    V_{1} &= \big\{(0,0) \big\} \cup \big\{ (1,j) \colon \ j \in \N \big\}, \\
    E_{1} &= \Big\{ \big((0,0),(1,1)\big)\Big\}\cup \Big\{ \big((1,j),(1,j+1)\big) \colon\ j \in \N \Big\}.
\end{align*} Let $\slam$ be a weighted shift on $\tcal_{2}$ with weights $\lambdab = \{ \lambda_v\}_{v \in V_{2}^\circ}$ given by
\begin{align*}
\lambda_{(i,j)} 
= \left\{ 
\begin{array}{cl} 
1 & \text{ for } i=1 \text{ and } j\in\N, \\ 
\frac{1}{2^j} & \text{ for } i=2 \text{ and } j\in\N.
\end{array} 
\right.
\end{align*}
Then $\slam\in\bsb(\ell^2(V))$ is not left-invertible since $\lim\limits_{j\to\infty}\slam e_{2j}=0$. Hence, the assumption of Corollary \ref{S-ref} is not satisfied. On the other hand, $\slam|_{\pth_1}$ is an isometric unilateral weighted shift and thus it is reflexive. By Theorem \ref{reflex} the whole operator $\slam$ is reflexive by \cite{Sa}.
\end{exa}
Now, let us define a directed tree, which grows rapidly, i.e., the number of children is multiplied by 4 in next generation. More precisely, each vertex in $n$-th generation has $2^{2n+2}$ children and the number of vertexes in $n$-th generation equals to $2^{(n+1)n}$.
\begin{dfn}
Let $\tcal_4=(V_4,E_4)$ be a rooted directed tree (see Figure \ref{szybkiedrzewo}) such that
\begin{align*}
    V_4 &= \Big\{ (m,n) \in \N_0^2 \colon n \leq 2^{m(m+1)} -1\Big\},\\
    E_4 &= \Big\{ \big((m, n), (m+1, k) \big) \colon  n2^{2m+2}\leq  k \leq (n+1)2^{2m+2}-1,\, m, n, k \in \N_0 \Big\}.
\end{align*}

\end{dfn}

\begin{figure}[ht] 
\begin{tikzpicture}[scale=0.8,transform shape,edge from parent/.style={draw,-latex}, level 1/.append style={level distance=1.5cm,sibling distance=4.5cm}, level 2/.append style={level distance=1.5cm,sibling distance=0.23cm}]
\node[circle,fill=gray!30] {\scriptsize{$(0,0)$}}
child {node[circle,fill=gray!30] {\scriptsize{$(1,0)$}} 
    child {
    edge from parent[dashed] node[above left,font=\scriptsize]{$\tfrac 14$
    }}
    child { edge from parent[dashed] node[left,font=\scriptsize]{}}
    child { edge from parent[dashed] node[left,font=\scriptsize]{}}
    child { edge from parent[dashed] node[left,font=\scriptsize]{}}
    child { edge from parent[dashed] node[left,font=\scriptsize]{}}
    child { edge from parent[dashed] node[left,font=\scriptsize]{}}
    child { edge from parent[dashed] node[left,font=\scriptsize]{}}
    child { edge from parent[dashed] node[left,font=\scriptsize]{}}
    child { edge from parent[dashed] node[left,font=\scriptsize]{}}
    child { edge from parent[dashed] node[left,font=\scriptsize]{}}
    child { edge from parent[dashed] node[left,font=\scriptsize]{}}
    child { edge from parent[dashed] node[left,font=\scriptsize]{}}
    child { edge from parent[dashed] node[left,font=\scriptsize]{}}
    child { edge from parent[dashed] node[left,font=\scriptsize]{}}
    child { edge from parent[dashed] node[left,font=\scriptsize]{}}
    child { edge from parent[dashed] node[above right,font=\scriptsize]{$\tfrac 14$}
    }
     edge from parent node[ above left,font=\scriptsize]{$\tfrac 12$}  
   }
child {node[circle,fill=gray!30] {\scriptsize{$(1,1)$}}
    child {
    edge from parent[dashed] node[above left,font=\scriptsize]{$\tfrac 14$
    }}
    child { edge from parent[dashed] node[left,font=\scriptsize]{}}
    child { edge from parent[dashed] node[left,font=\scriptsize]{}}
    child { edge from parent[dashed] node[left,font=\scriptsize]{}}
    child { edge from parent[dashed] node[left,font=\scriptsize]{}}
    child { edge from parent[dashed] node[left,font=\scriptsize]{}}
    child { edge from parent[dashed] node[left,font=\scriptsize]{}}
    child { edge from parent[dashed] node[left,font=\scriptsize]{}}
    child { edge from parent[dashed] node[left,font=\scriptsize]{}}
    child { edge from parent[dashed] node[left,font=\scriptsize]{}}
    child { edge from parent[dashed] node[left,font=\scriptsize]{}}
    child { edge from parent[dashed] node[left,font=\scriptsize]{}}
    child { edge from parent[dashed] node[left,font=\scriptsize]{}}
    child { edge from parent[dashed] node[left,font=\scriptsize]{}}
    child { edge from parent[dashed] node[left,font=\scriptsize]{}}
    child { edge from parent[dashed] node[above right,font=\scriptsize]{$\tfrac 14$}
    }
     edge from parent node[right = 5pt,font=\scriptsize]{$\tfrac 12$}  
   }
child {node[circle,fill=gray!30] {\scriptsize{$(1,2)$}}
    child {
    edge from parent[dashed] node[above left,font=\scriptsize]{$\tfrac 14$
    }}
    child { edge from parent[dashed] node[left,font=\scriptsize]{}}
    child { edge from parent[dashed] node[left,font=\scriptsize]{}}
    child { edge from parent[dashed] node[left,font=\scriptsize]{}}
    child { edge from parent[dashed] node[left,font=\scriptsize]{}}
    child { edge from parent[dashed] node[left,font=\scriptsize]{}}
    child { edge from parent[dashed] node[left,font=\scriptsize]{}}
    child { edge from parent[dashed] node[left,font=\scriptsize]{}}
    child { edge from parent[dashed] node[left,font=\scriptsize]{}}
    child { edge from parent[dashed] node[left,font=\scriptsize]{}}
    child { edge from parent[dashed] node[left,font=\scriptsize]{}}
    child { edge from parent[dashed] node[left,font=\scriptsize]{}}
    child { edge from parent[dashed] node[left,font=\scriptsize]{}}
    child { edge from parent[dashed] node[left,font=\scriptsize]{}}
    child { edge from parent[dashed] node[left,font=\scriptsize]{}}
    child { edge from parent[dashed] node[above right,font=\scriptsize]{$\tfrac 14$}
    }
     edge from parent node[left=5pt,font=\scriptsize]{$\tfrac 12$}  
   }
child {node[circle,fill=gray!30] {$(1,3)$}
    child {
    edge from parent[dashed] node[above left,font=\scriptsize]{$\tfrac 14$
    }}
    child { edge from parent[dashed] node[left,font=\scriptsize]{}}
    child { edge from parent[dashed] node[left,font=\scriptsize]{}}
    child { edge from parent[dashed] node[left,font=\scriptsize]{}}
    child { edge from parent[dashed] node[left,font=\scriptsize]{}}
    child { edge from parent[dashed] node[left,font=\scriptsize]{}}
    child { edge from parent[dashed] node[left,font=\scriptsize]{}}
    child { edge from parent[dashed] node[left,font=\scriptsize]{}}
    child { edge from parent[dashed] node[left,font=\scriptsize]{}}
    child { edge from parent[dashed] node[left,font=\scriptsize]{}}
    child { edge from parent[dashed] node[left,font=\scriptsize]{}}
    child { edge from parent[dashed] node[left,font=\scriptsize]{}}
    child { edge from parent[dashed] node[left,font=\scriptsize]{}}
    child { edge from parent[dashed] node[left,font=\scriptsize]{}}
    child { edge from parent[dashed] node[left,font=\scriptsize]{}}
    child { edge from parent[dashed] node[above right,font=\scriptsize]{$\tfrac 14$}
    }
     edge from parent node[above right,font=\scriptsize]{$\tfrac 12$}  
   };
\end{tikzpicture}
\caption{}
\label{szybkiedrzewo}
\end{figure}
\begin{exa}\label{4}
Let $\slam$ be a weighted shift on a directed tree $\tcal_4 = (V_4, E_4)$ with weights $\lambdab =\{ \lambda_v\}_{v \in V_4^\circ} \subset \C$ such that $
    \lambda_v = 2^{-|v|}, v \in V_4^\circ.$ Then the operator $S_{\lambdab_\pth}$ is not reflexive for every $\pth \in \pp$ (see \cite[Corolllary 1, pp.105]{shi}). Hence, the assumption of Theorem \ref{reflex} is not satisfied. On the other hand, $\slam$ is an isometry (is left-invertible) and by Corollary \ref{S-ref} it is reflexive.
    
\end{exa}
\section*{Acknowledgments}
All authors were supported by the Ministry of Science and Higher Education of the Republic of Poland.
\bibliographystyle{amsalpha}

\end{document}